\titleformat{\subsection}[runin]{\normalfont\bfseries}{\thesubsection.}{3pt}{}
\newcommand{\jl}{(\!(}
\newcommand{\jr}{)\!)}
\newcommand{\spl}{\{\!\!\{}
\newcommand{\rpl}{\}\!\!\}}
\newcommand{\shl}{\llbracket}
\newcommand{\shr}{\rrbracket}
\newcommand{\laa}{\langle\!\langle}
\newcommand{\raa}{\rangle\!\rangle}
\newcommand{\Gf}{\mathcal{G}}
\newcommand{\hamsec}{\mathsf{H}}
\newcommand{\hhamsec}{\widehat{\hamsec}}
\newcommand{\hinf}{\mathsf{h}}
\newcommand{\ssec}{\mathsf{S}}
\newcommand{\hssec}{\widehat{\ssec}}
\newcommand{\hzsec}{\widehat{\mathsf{Z}}}
\newcommand{\hbsec}{\widehat{\mathsf{B}}}
\newcommand{\hS}{\widehat{\mathcal{S}}}
\newcommand{\hge}{\mathfrak{h}}
\newcommand{\sag}{\widehat{\hge}}
\newcommand{\twprod}{\star}
\newcommand{\imt}{\iota}
\newcommand{\mcoc}{\Sigma}
\newcommand{\mom}{\mathbb{M}}
\newcommand{\fc}{\Gamma}
\newcommand{\X}{\mathscr{X}}
\newcommand{\lge}{\mathscr{L}}
\newcommand{\gge}{\mathscr{G}}
\newcommand{\sfr}{\mathscr{F}}
\newcommand{\trun}{\operatorname{\top}}
\newcommand{\lop}{\mathscr{L}}
\newcommand{\dn}{d_{\nabla}}
\newcommand{\sd}{\delta}
\newcommand{\rice}{\mathsf{Rc}}
\newcommand{\dop}{\mathscr{P}}
\newcommand{\Wl}{\operatorname{W}}
\newcommand{\pon}{\mathsf{p}}
\newcommand{\curv}{\mathcal{C}}
\newcommand{\ex}{\mathscr{E}}
\newcommand{\sRo}{\mathscr{S}}
\newcommand{\bric}{\overline{\ric}}
\newcommand{\scal}{\mathscr{R}}
\newcommand{\sOmega}{\mathbb{\Omega}}
\renewcommand{\H}{\mathscr{H}}
\newcommand{\tcon}{\mathbb{T}}
\newcommand{\shm}{\mathbb{H}}
\newcommand{\hm}{\mathsf{H}}
\newcommand{\sOm}{\mathbb{\Omega}}
\newcommand{\symcon}{\mathbb{S}}
\newcommand{\Ham}{\operatorname{Ham}}
\newcommand{\Symplecto}{\operatorname{Symp}}
\newcommand{\ham}{\operatorname{ham}}
\newcommand{\symplecto}{\operatorname{symp}}
\newcommand{\Om}{\Omega}
\newcommand{\con}{\mathsf{c}}
\newcommand{\sflat}{\curlyvee}
\newcommand{\ssharp}{\curlywedge}
\newcommand{\sprod}{\odot}
\newcommand{\pol}{\mathsf{Pol}}
\newcommand{\vr}{\updelta}
\newcommand{\sR}{\mathscr{R}}
\newcommand{\vol}{\mathsf{vol}}
\newcommand{\ka}{\kappa}
\newcommand{\T}{\mathcal{T}}
\newcommand{\Id}{\operatorname{Id}}
\newcommand{\dum}{\,\cdot\,}
\newcommand{\Ga}{\Gamma}
\newcommand{\cycle}{\operatorname{Cycle}}
\newcommand{\cyclearg}[1]{\underset{#1}{\cycle}}
\newcommand{\ric}{\mathsf{Ric}}
\newcommand{\Q}{\mathcal{Q}}
\renewcommand{\j}{\mathfrak{j}}
\newcommand{\hj}{\hat{\j}}
\newcommand{\hop}{\mathscr{H}}
\newcommand{\reat}{\mathbb{R}^{\times}}
\newcommand{\ext}{\Lambda}
\newcommand{\rc}{\mathscr{R}}
\newcommand{\cinf}{C^{\infty}}
\newcommand{\eno}{\operatorname{End}}
\newcommand{\ctm}{T^{\ast}M}
\newcommand{\si}{\sigma}
\newcommand{\lin}{\mathsf{L}}
\newcommand{\bnabla}{\bar{\nabla}}
\newcommand{\integer}{\mathbb{Z}}
\newcommand{\lie}{\mathfrak{L}}
\newcommand{\Aff}{\mathbb{Aff}}
\newcommand{\aff}{\mathbb{aff}}
\newcommand{\ste}{\mathbb{V}}
\newcommand{\F}{\mathcal{F}}
\newcommand{\tf}{\mathbb{\Theta}}
\newcommand{\A}{\mathcal{A}}
\newcommand{\std}{\mathbb{V}^{\ast}}
\newcommand{\stw}{\mathbb{W}}
\newcommand{\lb}{\langle}
\newcommand{\ra}{\rangle}
\newcommand{\al}{\alpha}
\newcommand{\be}{\beta}
\newcommand{\ga}{\gamma}
\newcommand{\G}{\widehat{G}}
\newcommand{\emf}{\mathcal{E}}
\newcommand{\g}{\mathfrak{g}}
\newcommand{\tensor}{\otimes}
\newcommand{\Ad}{\operatorname{Ad}}
\newcommand{\K}{\mathscr{K}}
\newcommand{\rea}{\mathbb R}
\newcommand{\hemom}{\widehat{\mom}}
\newcommand{\henon}{\mathbb{N}}
\newcommand{\tr}{\operatorname{\mathsf{tr}}}
\newcommand{\emom}{\mathscr{M}}
\renewcommand{\P}{\mathcal{P}}
\numberwithin{equation}{section}
\newtheorem{Theorem}{Theorem}[section]
\newtheorem{Corollary}[Theorem]{Corollary}
\newtheorem{Lemma}[Theorem]{Lemma}
 { \theoremstyle{definition}
\newtheorem{Remark}[Theorem]{Remark} }
\begin{document}


\newcommand{\arXivNumber}{1606.07739}

\renewcommand{\PaperNumber}{002}

\FirstPageHeading

\ShortArticleName{Symmetries of the Space of Linear Symplectic Connections}

\ArticleName{Symmetries of the Space\\ of Linear Symplectic Connections}

\Author{Daniel J.F.~FOX}

\AuthorNameForHeading{D.J.F.~Fox}

\Address{Departamento de Matem\'aticas del \'Area Industrial, Escuela T\'ecnica Superior de Ingenier\'{\i}a\\
y Dise\~no Industrial, Universidad Polit\'ecnica de Madrid, Ronda de Valencia 3,\\ 28012 Madrid, Spain}
\Email{\href{mailto:daniel.fox@upm.es}{daniel.fox@upm.es}}

\ArticleDates{Received June 30, 2016, in f\/inal form January 07, 2017; Published online January 10, 2017}

\Abstract{There is constructed a family of Lie algebras that act in a Hamiltonian way on the symplectic af\/f\/ine space of linear symplectic connections on a symplectic manifold. The associated equivariant moment map is a formal sum of the Cahen--Gutt moment map, the Ricci tensor, and a translational term. The critical points of a functional constructed from it interpolate between the equations for preferred symplectic connections and the equations for critical symplectic connections. The commutative algebra of formal sums of symmetric tensors on a symplectic manifold carries a pair of compatible Poisson structures, one induced from the canonical Poisson bracket on the space of functions on the cotangent bundle polynomial in the f\/ibers, and the other induced from the algebraic f\/iberwise Schouten bracket on the symmetric algebra of each f\/iber of the cotangent bundle. These structures are shown to be compatible, and the required Lie algebras are constructed as central extensions of their linear combinations restricted to formal sums of symmetric tensors whose f\/irst order term is a multiple of the dif\/ferential of its zeroth order term.}

\Keywords{symplectic connection; compatible Lie brackets; Hamiltonian action; symmetric tensors}

\Classification{53D20; 53D05; 53C05; 17B99}

\setcounter{tocdepth}{1}

\section{Introduction}
A (linear) \textit{symplectic connection} on a symplectic manifold $(M, \Om)$ is a torsion-free af\/f\/ine connection preserving the symplectic form~$\Om$.
The space $\symcon(M, \Om)$ of symplectic connections is a~symplectic af\/f\/ine space; the dif\/ference of two symplectic connections is identif\/ied with a section of the bundle $S^{3}(\ctm)$ of completely symmetric covariant three tensors, and the symplectic form $\sOm$ on $\symcon(M, \Om)$ is given by integrating over $M$ the pairing induced on such sections by $\Om$. The geometry of the symplectic af\/f\/ine space $(\symcon(M, \Om), \sOm)$ is the focus of this note.

Ef\/forts have been made to identify geometrically interesting classes of symplectic connections analogous to classes of connections studied in the Riemannian setting, such as the Levi-Civita connections of Einstein or constant scalar curvature metrics. Such classes are def\/ined in terms of the zeros and critical points of functionals on $(\symcon(M, \Om), \sOm)$ equivariant or invariant with respect to some action of some group of symplectomorphisms or Hamiltonian dif\/feomorphisms.

The geometry of any symplectic space is ref\/lected in its Poisson algebra of functions. Interesting functionals on $(\symcon(M, \Om), \sOm)$ are constructed from the curvature of $\nabla \in \symcon(M, \Om)$. The simplest interesting classes of symplectic connections that have been studied are those of Ricci type, the preferred symplectic connections (these include the Ricci f\/lat symplectic connections), and the critical symplectic connections. The focus here is on preferred and critical symplectic connections.

The critical points of the integral of the trace of the square of the Ricci endomorphism are the \textit{preferred} symplectic connections f\/irst studied by F.~Bourgeois and M.~Cahen \cite{Bourgeois-Cahen-preferred, Bourgeois-Cahen}. A~symplectic connection is preferred if and only if there vanishes the complete symmetrization of the covariant derivative of its Ricci tensor $\ric_{ij} = R_{ij}$, that is, it solves $(\sd^{\ast}\ric)_{ijk} = -\nabla_{(i}R_{jk)} = 0$. The functionals given by integrating the trace of higher even powers of the Ricci endomorphism play a role in the averaging procedure used by Fedosov in~\cite{Fedosov-atiyahbottpatodi}, and, as is explained brief\/ly in Section~\ref{preferredsection}, their critical points generalize preferred symplectic connections. However, because the f\/irst Pontryagin class of a symplectic manifold equals the integral of an expression quadratic in the curvature of the connection, the class of preferred symplectic connections is the only interesting class of symplectic connections obtained as the critical points of a functional given by integrating expressions quadratic in the curvature tensor (this observation is due to Proposition~2.4 of~\cite{Bourgeois-Cahen}; see also the introduction of~\cite{Fox-critical}).

\looseness=-1 In \cite{Cahen-Gutt}, M.~Cahen and S.~Gutt showed that the action of the group of Hamiltonian dif\/feo\-mor\-phisms on $\symcon(M, \Om)$ is Hamiltonian, with a moment map denoted here by $\K(\nabla)$ (see~\eqref{kdefined} for its def\/inition). In~\cite{Fox-critical}, the author began the study of the \textit{critical} symplectic connections, that are def\/ined to be the critical points of the integral $\emf(\nabla) = \int_{M}\K(\nabla)^{2}\Om_{n}$ (where $\Om_{n} = \tfrac{1}{n!}\Om^{n}$) with respect to arbitrary variations of~$\nabla$. A symplectic connection $\nabla$ is critical if and only if the Hamiltonian vector f\/ield $\hm_{\K(\nabla)}$ generated by~$\K(\nabla)$ is an inf\/initesimal automorphism of $\nabla$. Explicitly this means that $\hop(\K(\nabla)) = 0$ where $\hop(f)_{ijk} = (\lie_{\hm_{f}}\nabla)_{ij}\,^{p}\Om_{pk}$, $\hm_{f}$ is the Hamiltonian vector f\/ield generated by $f \in \cinf(M)$, and $\lie_{\hm_{f}}$ is the Lie derivative along $\hm_{f}$. The moment map~$\K(\nabla)$ is the sum of a~multiple of the twofold divergence of the Ricci tensor of $\nabla$ and a multiple of the complete contraction of the f\/irst Pontryagin form of $\nabla$ with $\Om \wedge \Om$. By the main theorem of D.~Tamarkin~\cite{Tamarkin-symplecticconnections}, a functional on $\symcon(M, \Om)$ given by integrating against $\Om_{n}$ polynomials in the curvature tensor and its covariant derivatives contracted with the symplectic form is independent of the choice of connection if and only if its integrand is, modulo divergences, a polynomial in Pontryagin forms contracted with the symplectic form. It follows that~$\K$ is in some sense, not made precise here, the simplest symplectomorphism equivariant map from $\symcon(M, \Om)$ to~$\cinf(M)$.

Suppose $(M, \Om)$ is compact, and f\/ix a reference symplectic connection $\nabla_{0}$. Here there are constructed, for each $(s, t) \in \rea \times \reat$, a Lie bracket $\jl \dum, \dum \jr_{s, t}$ on
\begin{gather}\label{sagdef}
\sag = \cinf(M) \oplus \Ga\big(S^{2}(\ctm)\big) \oplus \Ga\big(S^{3}(\ctm)\big)
\end{gather}
and \looseness=-1 a Hamiltonian action of the Lie algebra $(\sag, \jl \dum, \dum \jr_{s, t})$ on $(\symcon(M, \Om), \sOm)$. The associated moment map is essentially a linear combination of $\K$ and the Ricci tensor (see~\eqref{hemomfull} for the precise expression when $(s, t) = (1, 1)$; it requires some preliminary discussion to make sense of it). The choice of $\nabla_{0}$ is unimportant in the sense that the Lie algebras associated with dif\/ferent choices of $\nabla_{0}$ are isomorphic. In fact, the brackets $\jl\dum,\dum\jr_{s, t}$ (and corresponding Hamiltonian actions) for which neither $s$ nor $t$ is $0$ are all pairwise isomorphic, so isomorphic to $\jl\dum,\dum\jr = \jl\dum,\dum\jr_{1, 1}$; the reason for considering the full family is that it interpolates between the degenerate cases $(s, t) =(1, 0)$ and $(s, t) = (0, 1)$ (this is discussed further below). In~\eqref{sagdef}, the factor $\cinf(M)$ should be seen as the central extension of the Lie algebra $\ham(M, \Om)$ of Hamiltonian vector f\/ields, while $\Ga(S^{2}(\ctm))$ should be seen as the Lie algebra of inf\/initesimal gauge transformations of the symplectic frame bundle. The Lie bracket $\jl\dum,\dum\jr$ combines and extends these actions.

The somewhat complicated def\/inition of the bracket $\jl \dum, \dum \jr$ is summarized now. The precise def\/inition is~\eqref{stbracketdefined} of Section~\ref{maintheoremsection}; see also Theorem~\ref{emomtheorem}. The algebra $\hssec(M)$ whose elements are formal linear sums of completely symmetric tensors on $(M, \Om)$ (see Section~\ref{symmetrictensorsection} for the precise meaning) carries two Lie brackets, a dif\/ferential Lie bracket $\shl\dum, \dum\shr$ induced from the Schouten bracket on symmetric contravariant tensors on $M$ and an algebraic Lie bracket $(\dum, \dum)$ induced f\/iberwise by regarding the symmetric tensor algebra as the associated graded algebra of the Weyl algebra. The def\/inition of $\jl \dum, \dum \jr$ requires Lemma~\ref{compatibilitylemma}, showing that these Lie brackets are compatible in the sense of bi-Hamiltonian systems; precisely, the dif\/ferential, in the Lie algebra cohomology of $(\dum, \dum)$, of the symmetrized covariant derivative along $\nabla$ equals the Schouten bracket $\shl\dum,\dum\shr$ (see~\eqref{twobrackets} of Lemma~\ref{compatibilitylemma}). (Although it seems that this claim about the Schouten bracket must be known in some form to experts, I do not know a reference.) This means that any linear combination $[\dum, \dum]_{s, t} = s(\dum, \dum) + t\shl\dum,\dum\shr$ is again a Lie bracket.

\looseness=-1 The space $\hssec(M)$ is too big because its $1$-graded part comprises all one-forms on $M$, rather than the closed one-forms, that are dual to symplectic vector f\/ields. Because the $[\dum, \dum] = [\dum, \dum]_{1, 1}$ bracket of closed one forms in $\hssec(M)$ need not be closed, it is inadequate to restrict to the subspace of $\hssec(M)$ the $1$-graded part of which comprises closed one-forms (although something can be done in this directions; see Lemma~\ref{hamseccentrallemma}). However, the subspace $\hhamsec(M)$, comprising formal sums $\al_{0} + \al_{1} + \cdots$ such that $\al_{k} \in \Ga(S^{k}(\ctm))$ and $\al_{1} = -d\al_{0}$ is a subalgebra of $(\hssec(M), [\dum, \dum])$, and this subalgebra plays a prominent role in the discussion. Lemmas~\ref{hamseclemma} and~\ref{hamseccentrallemma} describe its structure.

Although $(\hssec(M), [\dum, \dum])$ cannot be made to act on $(\symcon(M, \Om), \sOm)$ in any obvious way, the subalgebra $\hhamsec(M)$ admits a symplectic af\/f\/ine action on $(\symcon(M, \Om), \sOm)$ such that the ideal $\hhamsec_{4}(M) = \prod_{k \geq 4}\Ga(S^{k}(\ctm))$ acts trivially. This action is weakly Hamiltonian, but the cocycle obstructing equivariance can be computed explicitly. This yields a central extension that acts in a Hamiltonian manner on $(\symcon(M, \Om), \sOm)$, with moment map $\hemom$. Since the Lie ideal $\hhamsec_{4}(M)$ acts trivially, quotienting by it yields an action of the Lie algebra $(\sag, \jl\dum,\dum\jr)$.

The action of the group of gauge transformations of the symplectic frame bundle of $M$ on the space of af\/f\/ine connections, possibly having torsion, preserving $\Om$, is Hamiltonian with a moment map constructed in the manner of Atiyah and Bott in \cite{Atiyah-Bott}. See Theorem~\ref{abtheorem} for the precise statement. Using the symplectic form the Lie algebra of inf\/initesimal gauge transformations can be identif\/ied with the space $\Ga(S^{2}(\ctm))$ equipped with a constant multiple of the algebraic Lie bracket mentioned above. The obstruction to this Lie algebra acting on $\symcon(M, \Om)$ is given by the Schouten bracket. However, an extended bracket on $\Ga(S^{2}(\ctm)) \oplus \Ga(S^{3}(\ctm))$ built from the Poisson bracket and the algebraic bracket does act on $\symcon(M, \Om)$, by symplectic af\/f\/ine transformations. This extended action can be combined in a coherent way with the action of $\ham(M, \Om)$ on $\symcon(M, \Om)$. While the resulting action is only weakly Hamiltonian, the cocycle measuring nonequivariance of the moment map can be computed explicitly and from it there is constructed the Hamiltonian action of $(\sag, \jl \dum, \dum \jr_{s, t})$. This action combines, at the inf\/initesimal level, the actions of $\Ham(M, \Om)$ and the group of gauge transformations to yield a Hamiltonian action with a moment map $\hemom$ that simultaneously extends the moment map $\K$ for the action of the Lie algebra $\ham(M, \Om)$ of Hamiltonian vector f\/ields and the Atiyah--Bott moment map.

When $M$ is compact, there is constructed (see~\eqref{engstdefined} for the precise def\/inition) from $\hemom$ a functional $\henon\colon \symcon(M, \Om) \to \rea$. There is a graded symmetric bilinear pairing on $\hhamsec(M)$ that descends to $\sag$. Pairing the moment map $\hemom$ with itself with respect to this pairing yields $\henon$. The critical points of $\henon$ are the solutions of
\begin{gather}\label{coupled}
t^{2}\hop(\K(\nabla)) + s^{2}\sd^{\ast}\ric = 0.
\end{gather}
The equations~\eqref{coupled} interpolate between the equations for preferred and critical symplectic connections. The case $(s, t) = (1, 0)$ yields the equations for the preferred symplectic connections introduced by Bourgeois and Cahen \cite{Bourgeois-Cahen}, and the case $(s, t) = (0, 1)$ recovers the equations for the critical symplectic connections introduced by the author in \cite{Fox-critical}.
\begin{Remark}
In \cite{Fox-critical} it is shown that on a $2$-manifold a preferred symplectic connection is critical. Consequently, on a $2$-dimensional symplectic manifold a preferred symplectic connection solves~\eqref{coupled} for any $(s, t) \in \rea^{2}$.
\end{Remark}

Section~\ref{symmetrictensorsection} gives the background needed to formulate and prove Lemma~\ref{compatibilitylemma}. Section~\ref{functionalsection} recalls the def\/initions and basic characterizations of critical and preferred symplectic connections. Section~\ref{atiyahbottsection} discusses the failure of the symplectomorphism group to act in a Hamiltonian manner on $\symcon(M, \Om)$. Finally, the construction of $\jl \dum, \dum \jr_{s, t}$ and its action on $\symcon(M, \Om)$ is given in Section~\ref{symplecticaffinesection}.

\section{Preliminaries}\label{backgroundsection}
\subsection{}
Smooth means $C^{\infty}$, and all manifolds, bundles, sections, etc. are smooth, unless otherwise indicated. Throughout, $(M, \Omega)$ is a connected $2n$-dimensional symplectic manifold oriented by the Liouville volume form $\Om_{n} = \tfrac{1}{n!}\Om^{n}$. For simplicity, $M$ is often assumed compact, although most claims generalize straightforwardly to the noncompact setting.

For a f\/inite-dimensional real vector space $\ste$, $S^{k}(\ste)$ and $\ext^{k}(\ste)$ denote the $k$th symmetric and antisymmetric powers of $\ste$. Purely algebraic constructions on vector spaces extend straightforwardly to vector bundles and their spaces of sections, and the same notations will be used in both contexts. For a smooth vector bundle $E \to M$, $\Ga(E)$ denotes the vector space of smooth sections of $E$, and $\ext^{k}(E)$ and $S^{k}(E)$ denote the $k$th antisymmetric and symmetric powers of $E$. By def\/inition $\ext^{0}(E) = S^{0}(E) = \cinf(M)$.

Tensors are usually indicated using the abstract index conventions (see \cite{Penrose-Rindler} or \cite{Wald}). Indices are labels and do not indicate a choice of frame. An index is in either \textit{up} or \textit{down} position. Up (down) indices label contravariant (covariant) tensors. For example, $\nabla_{i}\nabla_{j}X^{k}$ (which could also be written $(\nabla^{2}X)_{ij}\,^{k}$) indicates the second covariant derivative of the vector f\/ield $X^{k}$. Were a~frame $E_{i}$ chosen, $\nabla_{i}\nabla_{j}X^{k}$ would correspond to $(\nabla^{2}X)(E_{i}, E_{j}) = \nabla_{E_{i}}\nabla_{E_{j}}X - \nabla_{\nabla_{E_{i}}E_{j}}X$, and not to~$\nabla_{E_{i}}\nabla_{E_{j}}X$. The up index $k$ on $X^{k}$ is simply a label that indicates that $X$ is a contravariant vector f\/ield. The summation convention is used in the following form: repetition of a label in up and down position indicates the trace pairing. Enclosure of indices in square brackets (parentheses) indicates complete antisymmetrization (symmetrization) over the enclosed indices, so that, for example, $a^{ij}= a^{(ij)} + a^{[ij]}$ indicates the decomposition of a contravariant two-tensor into its symmetric and skew-symmetric parts. An index included between vertical bars $|\,|$ is omitted from an indicated (anti)symmetrization; for example $2a_{[i|jk|l]} = a_{ijkl} - a_{ljki}$.

The conventions are illustrated by the following def\/initions. The curvature $R_{ijk}\,^{l}$ and torsion~$\tau_{ij}\,^{k}$ of an af\/f\/ine connection $\nabla$ are def\/ined by $2\nabla_{[i}\nabla_{j]}X^{k} = R_{ijp}\,^{k}X^{p} - \tau_{ij}\,^{p}\nabla_{p}X^{k}$. The \textit{Ricci curvature} of $\nabla$ is $R_{ij} = R_{pij}\,^{p}$. (Sometimes, for readability, $\ric$ is written instead of $R_{ij}$.)

The antisymmetric bivector $\Om^{ij}$ inverse to $\Omega_{ij}$ is def\/ined by $\Omega^{ip}\Omega_{pj} = -\delta_{j}\,^{i}$. Indices are raised and lowered using $\Omega_{ij}$ and $\Omega^{ij}$ by contracting with these tensors consistently with the conventions $X_{i} = X^{p}\Om_{pi}$ and $X^{i} = \Om^{ip}X_{p}$. When it is raised or lowered, an index's horizontal position is maintained. The symplectic \textit{sharp} and \textit{flat} operators on a vector f\/ield $X^{i}$ and a one-form $\al_{i}$ are def\/ined by $X^{\sflat}_{i} = X^{p}\Omega_{pi}$ and $\al^{\ssharp\,i} = \Omega^{ip}\al_{p}$. These are inverses, so that $(X^{\sflat})^{\ssharp} = X$. For a vector f\/ield $X$, $X_{i}$ and $X^{\sflat}$ are synonyms and both notations will be used.

\subsection{}
Let $\Symplecto(M, \Om)$ be the group of compactly supported symplectomorphisms of $(M, \Om)$. Its Lie algebra $\symplecto(M, \Om)$ comprises compactly supported vector f\/ields $X$ that are locally Hamiltonian, meaning $\lie_{X}\Om = 0$; equivalently $X^{\sflat} = \Om(X, \,\cdot\,)$ is closed. Def\/ine the Hamiltonian vector f\/ield $\hm_{f}$ of $f \in \cinf(M)$ by $\hm_{f}^{i} = -df^{i} = \Om^{pi}df_{p}$. Since $\hm_{\{f, g\}} = [\hm_{f}, \hm_{g}]$ for the Poisson bracket $\{f, g\} = \Om(\hm_{f}, \hm_{g})$ of $f, g \in \cinf(M)$, the compactly supported Hamiltonian vector f\/ields constitute a subalgebra $\ham(M, \Om)\subset \symplecto(M, \Om)$. For a $2k$-form $\si$ and $\Om_{k} = \tfrac{1}{k!}\Om^{k}$,
\begin{gather}\label{twokformwedge}
\si \wedge \Om_{n-k} = \tfrac{1}{k!2^{k}}\Om^{i_{1}i_{2}}\vdots \Om^{i_{2k-1}i_{2k}}\si_{i_{1}\dots i_{2k}}\Om_{n} = \tfrac{1}{k!2^{k}}\si_{i_{1}}\,^{i_{1}}\,_{i_{2}}\,^{i_{2}}\,_{\dots}\,^{\dots}\,_{i_{k}}\,^{i_{k}}\Om_{n}.
\end{gather}
By~\eqref{twokformwedge}, $\{f, g\}\Om_{n} = df \wedge dg \wedge \Om_{n-1} = d(fdg\wedge \Om_{n-1})$. Since, if $M$ is compact, $\int_{M}\{f, g\} \,\Om_{n} = 0$ for $f, g \in \cinf(M)$, the subspace $\cinf_{0}(M) \subset \cinf(M)$ comprising mean zero functions is a Lie subalgebra of $\cinf(M)$, isomorphic to $\ham(M, \Om)$ via the map $f \to \hm_{f}$. If $M$ is noncompact, then $\ham(M, \Om)$ is identif\/ied with the Lie algebra $(\cinf_{c}(M), \{\dum, \dum\})$ of compactly supported smooth functions.
The subgroup $\Ham(M, \Om)\subset \Symplecto(M, \Om)_{0}$ of Hamiltonian dif\/feomorphisms of $(M, \Omega)$ comprises symplectomorphisms of $(M, \Omega)$ that can be realized as the time one f\/low of a normalized time-dependent Hamiltonian on $M \times [-1, 1]$, where \textit{normalized} means mean-zero if $M$ is compact and compactly supported if $M$ is noncompact. The inf\/initesimal generator of a f\/low by Hamiltonian dif\/feomorphisms is a Hamiltonian vector f\/ield, so that the Lie algebra of $\Ham(M, \Om)$ is $\ham(M, \Om)$.

\subsection{}
An action of a Lie group $G$ on a symplectic manifold $(M, \Om)$ is \textit{symplectic} if $G$ acts by symplectic dif\/feomorphisms. The associated Lie algebra homomorphism from the Lie algebra $\g$ of $G$ to $\symplecto(M, \Om)$, def\/ined by $x \to \X^{x}_{p} = \tfrac{d}{dt}\big|_{t = 0}\exp(-tx)\cdot p$, is \textit{weakly Hamiltonian} if there is a map $\mu\colon M \to \g^{\ast}$ such that for each $x \in \g$, the Hamiltonian vector f\/ield $\hm_{\mu(x)}$ equals the vector f\/ield $\X^{x}$ determined by the action. If~$\mu$ is equivariant with respect to the given action of~$G$ on~$M$ and the coadjoint action of $G$ on $\g^{\ast}$, then the action is \textit{Hamiltonian} and~$\mu$ is a \textit{moment map}. When $G$ is simply connected, this equivariance is equivalent to the requirement that when viewed as a map from $\g$ to $\cinf(M)$, the map $\mu$ be a Lie algebra homomorphism, $\{\mu(x), \mu(y)\} = \mu([x, y])$, and the induced action of $\g$ is said to be Hamiltonian if it satisf\/ies this last condition. For a~weakly Hamiltonian action that is not necessarily Hamiltonian, the associated $\mu$ is called a~\textit{nonequivariant moment map}. Since some authors do not require equivariance in the def\/inition of a moment map, the redundant expression \textit{equivariant moment map} is sometimes used for clarity. These def\/initions are applied in the inf\/inite-dimensional context, in which case they are to be understood formally.

\subsection{}\label{affineactionsection}
This subsection f\/ixes terminology related to af\/f\/ine actions of a Lie algebra. This is needed mainly in Section~\ref{maintheoremsection}.
A map $f\colon A \to B$ between af\/f\/ine spaces $A$ and $B$ is af\/f\/ine if there is a~linear map $\lin(f)\colon V \to W$, between the vector spaces $V$ and $W$ of translations of $A$ and $B$, such that $f(q) - f(p) = \lin(f)(q - p)$ for all $p, q \in A$. Under composition the bijective af\/f\/ine maps of $A$ to itself form the group $\Aff(A)$ of af\/f\/ine automorphisms of $A$, and $\lin\colon \Aff(A) \to {\rm GL}(V)$ is a surjective homomorphism. A one-parameter subgroup through the identity in $\Aff(A)$ has the form $\Id_{A} + t\phi + O(t^{2})$ for some af\/f\/ine map $\phi\colon A \to V$. Consequently, the Lie algebra $\aff(A)$ of $\Aff(A)$ is the vector space of af\/f\/ine maps from $A$ to $V$ equipped with the bracket $[f, g] = \lin(f)\circ g - \lin(g) \circ f$.

An \textit{affine representation} of a Lie algebra $(\g, [\dum, \dum])$ on the af\/f\/ine space $A$ is a Lie algebra homomorphism $\rho\colon \g \to \aff(A)$, that is, a linear map satisfying $\rho([a, b]) = \lin(\rho(a))\rho(b) - \lin(\rho(b))\rho(a)$ for all $a,b\in \g$. An \textit{affine action} of a Lie algebra $(\g, [\dum, \dum])$ on the af\/f\/ine space $A$ is a biaf\/f\/ine map $\pi\colon \g \times A \to A$ such that $\pi(0, p) = p$ for all $p \in A$ and $a\cdot(b\cdot p) - b\cdot(a \cdot p) = [a, b]\cdot p - p$ for all $p \in A$ and all $a, b \in \g$. That this identity is an equality of vectors in $V$ explains the need for the $p$ on its right-hand side. That $\pi$ be biaf\/f\/ine means that the maps $p \to a\cdot p = \pi(a, p)$ and $a \to a\cdot p = \pi(a, p)$ are af\/f\/ine maps from $A$ to $A$ for all $a \in \g$ and from $\g$ to $\A$ for all $p \in \A$. (Note that a biaf\/f\/ine map need not be af\/f\/ine with respect to the product af\/f\/ine structure on~$\g \times \A$.)

\begin{Lemma}\label{affineactionlemma}
For an affine space $A$ and a Lie algebra $\g$, a map $\rho\colon \g \to \aff(A)$ is an affine representation if and only if $\pi\colon \g \times A \to A$ defined by $\pi(a, p) = p + \rho(a)p$ is an affine action.
\end{Lemma}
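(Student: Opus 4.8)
The plan is to show that, under the assignment $\rho \mapsto \pi$ with $\pi(a,p) = p + \rho(a)p$, the three defining conditions of an affine action (biaffineness, the normalization $\pi(0,p) = p$, and the commutator identity) correspond exactly to the two conditions defining an affine representation (linearity of $\rho$ together with the homomorphism property). Since each condition on $\pi$ matches a condition on $\rho$, both implications reduce to the same computation, so I would establish the equivalence in a single pass rather than proving the two directions separately.

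First I would dispose of the structural conditions. For fixed $a$, the map $p \mapsto \pi(a,p) = p + \rho(a)p$ is affine with linear part $\Id_V + \lin(\rho(a))$, since $\pi(a,q) - \pi(a,p) = (q-p) + (\rho(a)q - \rho(a)p) = (\Id_V + \lin(\rho(a)))(q-p)$; this uses only that $\rho(a) \in \aff(A)$. For the dependence on $a$ I would observe that $\pi(a,p) - \pi(b,p) = \rho(a)p - \rho(b)p$, so $a \mapsto \pi(a,p)$ is affine from $\g$ to $A$ precisely when $a \mapsto \rho(a)p$ is linear for each $p$; together with $\pi(0,p) = p + \rho(0)p = p$ this is equivalent to the linearity of $\rho\colon \g \to \aff(A)$, because affine maps are determined by their values pointwise and the vector-space structure on $\aff(A)$ is the pointwise one. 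Thus biaffineness and the normalization $\pi(0,p)=p$ together match exactly the requirement that $\rho$ be linear.

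The heart of the argument is the commutator identity. I would compute $a\cdot(b\cdot p)$ by writing $b\cdot p = p + \rho(b)p$ and then invoking affineness of $\rho(a)$ in the form $\rho(a)(p + v) = \rho(a)p + \lin(\rho(a))v$ with $v = \rho(b)p$, obtaining
\begin{gather*}
a\cdot(b\cdot p) = p + \rho(b)p + \rho(a)p + \lin(\rho(a))\big(\rho(b)p\big).
\end{gather*}
Interchanging $a$ and $b$ and subtracting, the symmetric terms cancel and the difference, an element of $V$, becomes
\begin{gather*}
a\cdot(b\cdot p) - b\cdot(a\cdot p) = \lin(\rho(a))\big(\rho(b)p\big) - \lin(\rho(b))\big(\rho(a)p\big) = \big(\lin(\rho(a))\rho(b) - \lin(\rho(b))\rho(a)\big)(p),
\end{gather*}
which is precisely the bracket $[\rho(a),\rho(b)]$ of $\aff(A)$ evaluated at $p$. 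Since $[a,b]\cdot p - p = \rho([a,b])p$, the affine-action identity $a\cdot(b\cdot p) - b\cdot(a\cdot p) = [a,b]\cdot p - p$ holds for all $p \in A$ if and only if $\lin(\rho(a))\rho(b) - \lin(\rho(b))\rho(a) = \rho([a,b])$ in $\aff(A)$, that is, if and only if $\rho$ respects brackets.

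Combining the two computations yields both implications simultaneously: $\pi$ is biaffine with $\pi(0,p)=p$ iff $\rho$ is linear, and the commutator identity for $\pi$ holds iff $\rho$ is a Lie algebra homomorphism, so $\pi$ is an affine action iff $\rho$ is an affine representation. The only point requiring care is the bookkeeping of which differences lie in $A$ and which in $V$; the one genuinely substantive step is the expansion of $\rho(a)\big(p + \rho(b)p\big)$ via affineness of $\rho(a)$, as this is what produces the linear-part terms $\lin(\rho(a))$ that assemble into the $\aff(A)$-bracket. I do not anticipate any obstacle beyond this, since everything else is the formal algebra of affine maps.
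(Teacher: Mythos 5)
Your proposal is correct and follows essentially the same route as the paper, which merely sketches the argument: it notes that linearity of $\rho$ is equivalent to biaffineness of $\pi$ together with the normalization, and then records the identity $\left(a\cdot(b\cdot p) - b\cdot (a\cdot p)\right) - \left( [a, b]\cdot p - p\right) =\left(\lin(\rho(a))\rho(b) - \lin(\rho(b))\rho(a)- \rho([a, b])\right)p$, which is exactly what your expansion of $\rho(a)\big(p+\rho(b)p\big)$ establishes. You have simply filled in the details the paper leaves to the reader.
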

First one shows that $\rho$ is linear if and only if $\pi$ is biaf\/f\/ine and $\pi(0, p) =0$ for all $p \in A$. Then one checks directly that, in this case, $\left((a\cdot(b\cdot p) - b\cdot (a\cdot p)\right) - ( [a, b]\cdot p - p) =(\lin(\rho(a))\rho(b) - \lin(\rho(b)\rho(a)- \rho([a, b]))p$. The representation $\rho$ is said to be \textit{associated} with the af\/f\/ine action~$\pi$. The \textit{stabilizer} of $p_{0} \in A$ under the af\/f\/ine action of $\g$ means the subalgebra $ \{a \in \g\colon a\cdot p_{0} = p_{0}\} = \{a \in \g\colon \rho(a)p_{0} = 0\}$.

A typical example is the af\/f\/ine action $X\cdot \nabla = \nabla + \lie_{X}\nabla$ of the Lie algebra of vector f\/ields on $M$ on the space of af\/f\/ine connections on $M$. The associated af\/f\/ine representation is $\rho(X) = \lie_{X}\nabla$.

A \textit{symplectic affine space} means an af\/f\/ine space $A$ equipped with a symplectic form $\Om$ inva\-riant under the action of the group of translations of $A$. An af\/f\/ine representation $\rho\colon \g \to \aff(A)$ on a symplectic af\/f\/ine space $(A, \Om)$ is \textit{symplectic affine} if each $\rho(a)$ is symplectic, meaning the linear part $\lin(\rho(a))$ satisf\/ies $\Om(\lin(\rho(a))u, v) + \Om(u, \lin(\rho(a))v) = 0$ for all $a \in \g$ and $u, v \in V$. Similarly, an af\/f\/ine action is symplectic af\/f\/ine if the corresponding af\/f\/ine representation is symplectic.

\subsection{}
An af\/f\/ine connection is \textit{symplectic} if $\nabla_{i}\Om_{jk} = 0$. For background about symplectic connections see \cite{Bieliavsky-Cahen-Gutt-Rawnsley-Schwachhofer} or \cite{Fedosov} and the references therein. The space $\tcon(M, \Om)$ of symplectic af\/f\/ine connections on $(M, \Om)$ is an af\/f\/ine space modeled on the vector space $\Ga(\ctm \tensor S^{2}(\ctm))$ of covariant tensors $\Pi_{ijk} = \Pi_{i(jk)}$ symmetric in the last two entries. Precisely, if $\nabla \in \tcon(M, \Om)$ and $\bnabla = \nabla + \Pi_{ij}\,^{k}$ then $\bnabla_{i}\Om_{jk} = -2\Pi_{i[jk]}$, so that $\bnabla \in \tcon(M, \Om)$ if and only if $\Pi_{i[jk]} = 0$. Note that $\Pi_{ip}\,^{p} =0$, ref\/lecting that a symplectic af\/f\/ine connection preserves the associated volume form $\Om_{n}$. The af\/f\/ine subspace $\symcon(M, \Om)\subset \tcon(M, \Om)$ comprising torsion-free symplectic af\/f\/ine connections is modeled on the vector space $\ssec^{3}(M)$, for the dif\/ference tensor of torsion-free connections satisf\/ies $\Pi_{[ij]}\,^{k} = 0$, and with $\Pi_{i[jk]} = 0$, this implies $\Pi_{ijk} = \Pi_{(ijk)}$. The space $\symcon(M, \Om)$ is nonempty, for if $\bnabla$ is any torsion-free af\/f\/ine connection then $\nabla = \bnabla + \tfrac{2}{3}\Om^{kp}\bnabla_{(i}\Om_{j)p}$ is symplectic. It is convenient to write $T_{\nabla}\tcon(M, \Om) = \Ga(\ctm \tensor S^{2}(\ctm))$ or $T_{\nabla}\symcon(M, \Om)= \ssec^{3}(M)$ for the tangent space to the af\/f\/ine spaces $\tcon(M, \Om)$ or $\symcon(M, \Om)$ at $\nabla$. The focus here is on $\symcon(M, \Om)$, and it is mainly in Section~\ref{gaugeactionsection} that reference will be made to $\tcon(M, \Om)$. Throughout this paper \textit{\textbf{symplectic connection} means a torsion-free symplectic connection, while \textbf{symplectic affine connection} means a connection preserving $\Om$ but possibly having torsion}.

Since, for $\nabla \in \symcon(M, \Om)$, $\Om_{n}$ is $\nabla$-parallel, the curvature $-R_{ijp}\,^{p}$ of the connection induced on $\ext^{2n}(\ctm)$ by $\nabla$ vanishes. Hence $\nabla$ has symmetric Ricci tensor, for $2R_{[ij]} = - R_{ijp}\,^{p} = 0$. By the Ricci identity, $0 = 2\nabla_{[i}\nabla_{j]}\Om_{kl} = -2R_{ij[kl]}$, where $R_{ijkl} = R_{ijk}\,^{p}\Om_{pl}$. The algebraic Bianchi identity and $R_{ij[kl]} = 0$ yield $R_{p}\,^{p}\,_{ij} = -2R_{ip}\,^{p}\,_{j} = 2R_{pij}\,^{p} = 2R_{ij}$, from which it follows that every nontrivial trace of $R_{ijkl}$ is a constant multiple of $R_{ij}$.

\subsection{}
Let $(\ste, \Om)$ be a symplectic vector space. On $\Wl^{p, q}(\ste) = \ext^{p}(\ste)\tensor S^{q}(\ste)$ def\/ine
\begin{gather}\label{ompairing}
\Om( \al, \be) = (-1)^{p}\tfrac{1}{p!}\al_{i_{1}\dots i_{p}j_{1}\dots j_{q}}\be^{i_{1}\dots i_{p}j_{1}\dots j_{q}}.
\end{gather}
This pairing is graded symmetric in the sense that $\Om( \al, \be)= (-1)^{|\al||\be|}\Om( \be, \al)$ for homogeneous elements $\al$ and $\be$, where $|\al| = p + q$ is the \textit{degree} of $\al \in \Wl^{p,q}(\ste)$. Integrating~\eqref{ompairing} yields a~$\Symplecto(M, \Om)$-invariant graded symmetric pairing on $\Ga(\Wl^{p, q}(\ctm))$ def\/ined by
\begin{gather}\label{wpairing}
\laa \al, \be \raa = \int_{M}\Om(\al, \be)\,\Om_{n} = (-1)^{p}\tfrac{1}{p!}\int_{M}\al_{i_{1}\dots i_{p}j_{1}\dots j_{q}}\be^{i_{1}\dots i_{p}j_{1}\dots j_{q}}\,\Om_{n}.
\end{gather}
Using an almost complex structure compatible with $\Om$ it is straightforward to show that the pairing $\laa \al, \be \raa$ is weakly nondegenerate in the sense that if $ \laa \al, \be \raa = 0$ for all compactly supported $\be \in \Ga(\Wl^{p, q}(\ctm))$ then $\al = 0$.

\subsection{}
Fix a $2n$-dimensional symplectic vector space $(\ste, \Om)$. The \textit{(linear) symplectic frame bundle} $\pi\colon \sfr \to M$ is the $G = Sp(\ste, \Om)$ principal bundle whose f\/iber over $p \in M$ comprises symplectic linear maps $u\colon (\ste, \Om) \to (T_{p}M, \Om)$. A symplectic af\/f\/ine connection $\nabla$ determines and is determined by a principal connection on~$\sfr$. The space $\tcon(M, \Om)$ of symplectic af\/f\/ine connections on~$M$ is an af\/f\/ine space modeled on the vector space of $1$-forms taking values in the adjoint bundle $\Ad(\sfr) = \sfr \times_{\Ad}\g$ associated with $\sfr$ by the adjoint action of~$G$ on its Lie algebra $\g = \sp(\ste, \Om)$.

\subsection{}
Given $\al, \be \in T_{\nabla}\tcon(M, \Om)$, composing the bundle-valued two-form $(\al \wedge \be)_{ijk}\,^{l} = 2\al_{[i|p|}\,^{l}\be_{j]k}\,^{p}$ $ = -2\al_{[i}\,^{pl}\be_{j]kp}$ with the trace on endomorphisms yields the two-form $\tr (\al \wedge \be)_{ij} = 2\al_{[i|p|}\,^{q}\be_{j]q}\,^{p}$. By~\eqref{twokformwedge}, $\tr(\al \wedge \be)\wedge \Om_{n-1} = -\al_{ijk}\be^{ijk} \Om_{n}$. By~\eqref{wpairing} the two-form $\sOm$ def\/ined on $\tcon(M, \Om)$ by
\begin{gather*}
\sOm_{\nabla}(\al, \be) = -\int_{M} \tr(\al \wedge \be) \wedge \Om_{n-1} = \int_{M} \al_{ijk}\be^{ijk} \, \Om_{n} = \laa \al, \be \raa, \qquad \al, \be \in T_{\nabla}\tcon,
\end{gather*}
is weakly nondegenerate. The translation invariance, $\sOm_{\nabla + \Pi} = \sOm_{\nabla}$, for $\Pi \in T_{\nabla}\tcon(M, \Om)$ means~$\sOm$ is parallel, so closed, and so~$\sOm$ is a symplectic form. The subspace $\symcon(M, \Om)\subset \tcon(M, \Om)$ is symplectic.

The group of dif\/feomorphisms acts on the space of af\/f\/ine connections on the right, by pullback. Precisely $\phi^{\ast}(\nabla)$ is def\/ined by $\phi^{\ast}(\nabla)_{X}Y = T\phi^{-1}(\nabla_{T\phi(X)}T\phi(Y))$. This action preserves the torsion-free connections. Consequently, $\Symplecto(M, \Om)$ acts on $\symcon(M, \Om)$ and $\tcon(M, \Om)$ by pullback, and this action preserves $\sOm$. Integration by parts yields the corresponding inf\/initesimal statement, $\sOm_{\nabla} (\lie_{X}\al, \be) + \sOm_{\nabla}( \al, \lie_{X}\be ) = 0$, for $X \in \symplecto(M, \Om)$.

\section{Symmetric tensors on symplectic manifolds}\label{symmetrictensorsection}

\subsection{}
The symmetric tensor algebra $\mathcal{S}(\ste)$, which is, by def\/inition, a quotient of the tensor algebra, can be identif\/ied, as a graded vector space, with the graded vector space $\oplus_{k \geq 0}S^{k}(\ste)$ equipped with the symmetric product $(\al \sprod \be)^{i_{1}\dots i_{k+l}} = \al^{(i_{1}\dots i_{k}}\be^{i_{k+1}\dots i_{k+l})}$, for $\al \in S^{k}(\ste)$ and $\be \in S^{l}(\ste)$. The subspace $S^{k}(\ste)$ is identif\/ied with the space $\pol^{k}(\std)$ of homogeneous degree $k$ polynomials on the dual vector space $\std$, and the product $\sprod$ corresponds to multiplication of polynomials. Elements of the formally completed symmetric tensor algebra $\hat{\mathcal{S}}(\ste) = \prod_{k \geq 0}S^{k}(\ste)$ are identif\/ied with formal inf\/inite sums of symmetric tensors, and correspond with formal power series on~$\std$.

Let $\mathcal{S}(\ctm) = \oplus_{k \geq 0}S^{k}(\ctm)$ and $\hat{\mathcal{S}}(\ctm) = \prod_{k\geq 0}S^{k}(\ctm)$. Set $\ssec^{k}(M) = \Ga(S^{k}(\ctm))$. It is more convenient to work with the direct product $\hssec(M)= \prod_{k \geq 0}\!\ssec^{k}(M)$ than with $\Ga(\mathcal{S}(\ctm))$ or the direct sum $\ssec(M)= \oplus_{k \geq 0}\ssec^{k}(M)$. Because a section of $\mathcal{S}(\ctm)$ need not have bounded degree, the space $\Ga(\mathcal{S}(\ctm))$ can be larger than $\ssec(M)$. On the other hand, the direct product~$\hssec(M)$ equals the space of sections of $\hS(\ctm)$. An element of $\hssec(M)$ is regarded as a formal inf\/inite sum of symmetric tensors on~$M$. There are inclusions, $\ssec(M) \subset \Ga(\mathcal{S}(M)) \subset \hssec(M)$, as associative algebras. In general, the algebraic constructions considered here are valid on any of these spaces, and, although they are usually formulated in terms of~$\hssec(M)$, their reformulations for the smaller spaces are generally trivial or straightforward.

An element $\al \in S^{k}(\ste)$ or $\al \in \ssec^{k}(M)$ is said to have degree $|\al| = k$. For all $i \geq 1$, the subspaces $\mathcal{S}_{k}(\ste) = \oplus_{i \geq k}S^{i}(\ste)$ and $\hat{\mathcal{S}}_{k}(\ste) = \prod_{i \geq k}S^{i}(\ste)$ are ideals with respect to the commutative algebra structure determined by $\sprod$.
The $p = 0$ case of~\eqref{ompairing} extends to a graded symmetric pairing on~$\hS(\ste)$ in such a way that $S^{k}(\ste)$ and $S^{l}(\ste)$ are orthogonal if $k \neq l$. Likewise, as commutative algebras, $\ssec(M)$ and $\hssec(M)$ are f\/iltered by the ideals $\ssec_{k}(M)$ and $\hssec_{k}(M)$ comprising sums of tensors of degree at least $k$, and the pairing~\eqref{wpairing} extends to a graded symmetric pairing on $\hssec(M)$.

\subsection{}\label{algebraicbracketsection}
If $(\ste, \Om)$ is a symplectic vector space, then $\mathcal{S}(\std)$ carries a Lie bracket, determining with~$\sprod$ a~Poisson algebra, that can be def\/ined most simply as the unique Poisson bracket $(\dum, \dum)$ on $(\mathcal{S}(\std), \sprod)$ such that $(\al, \be) = \Om^{ij}\al_{i}\be_{j}$ for all $\al, \be \in \std$. By def\/inition, $(\dum, \dum)$ has degree $-2$. The bracket $(\dum, \dum)$ can be described explicitly as follows. The space of functions on $\ste$ carries the Poisson structure determined by $\Om$. Transporting this Poisson structure to $\mathcal{S}(\std)$ via the identif\/ication of $\mathcal{S}(\std)$ with the ring of polynomials on $\ste$, yields the bracket $(\dum, \dum)$ on $\mathcal{S}(\std)$. Concretely, for $\al \in S^{k}(\std)$, the corresponding polynomial $\hat{\al} \in \pol^{k}(\ste)$ is $\hat{\al}(u) = \al_{i_{1}\dots i_{k}}u^{i_{1}}\cdots u^{i_{k}}$, and it generates the Hamiltonian vector f\/ield $\hm_{\hat{\al}}^{i} = -k\al^{i}\,_{i_{1}\dots i_{k-1}}u^{i_{1}}\cdots u^{i_{k-1}}$ on $\ste$. The Poisson bracket of $\hat{\al}$ and $\hat{\be}$ is the element $\Om(\hm_{\hat{\al}}, \hm_{\hat{\be}})$ of $\pol^{k+l-2}(\ste)$ corresponding to $(\al, \be) \in S^{k+l-2}(\std)$ def\/ined by
\begin{align}
(\al, \be)_{i_{1}\dots i_{k+l-2}} &= kl\al_{p(i_{1}\dots i_{k-1}}\be_{i_{k} \dots i_{k+l-2})}\,^{p}\label{algebraicbracket}\\
& = \tfrac{kl}{k+l-2}\left((k-1)\al_{pi_{1}(i_{2}\dots i_{k-1}}\be_{i_{k}\dots i_{k+l-2})}\,^{p} - (l-1)\be_{pi_{1}(i_{2}\dots i_{l-1}}\al_{i_{l}\dots i_{k+l-2})}\,^{p}\right).\nonumber
\end{align}
In the case $l = 1$ and $k > 1$, $(\al, \be) = k\be^{p}\al_{pi_{1}\dots i_{k-1}}$ is simply $k$ times interior multiplication of the vector f\/ield $\be^{i}$ in $\al$. The Poisson bracket~\eqref{algebraicbracket} extends to $\hat{\mathcal{S}}(\std)$ with the same def\/inition.

Although it is true by construction that $(\dum, \dum)$ is a Lie bracket that forms with the symmetric product $\sprod$ a Poisson structure, this can also be checked directly. Let $\ga \in S^{m}(\std)$. That $(\dum, \dum)$ is a Lie bracket follows by summing cyclic permutations of the identity
\begin{gather*}
((\al, \be), \ga)_{i_{1}\dots i_{k+l+m-4}} = klm\left((k-1)\al_{pq(i_{1}\dots i_{k-2}}\be_{i_{k-1}\dots i_{k+l-3}}\,^{p}\ga_{i_{k+l-2}\dots i_{k+l+m-4})}\,^{q} \right.\\
\left. \hphantom{((\al, \be), \ga)_{i_{1}\dots i_{k+l+m-4}} =}{}- (l-1)\be_{pq(i_{1}\dots i_{l-2}}\al_{i_{k-1}\dots i_{k+l-3}}\,^{p}\ga_{i_{k+l-2}\dots i_{k+l+m-4})}\,^{q}\right)
\end{gather*}
(obtained by using both equalities in~\eqref{algebraicbracket}). Similarly, verifying $(\al \sprod \be, \ga) = \al \sprod (\be, \ga) + (\al, \ga)\sprod \be$ is straightforward using~\eqref{algebraicbracket}, and so $(\dum, \dum)$ is a Poisson bracket.

Note that $(S^{2}(\std), (\dum, \dum))$ is a Lie subalgebra of $(\mathcal{S}(\std), (\dum, \dum))$. Using the easily checked invariance $\Om((\al, \be), \ga) + \Om(\be, (\al, \ga)) = 0$ for $\al \in S^{2}(\std)$ and $\be, \ga \in S^{k}(\std)$, it is straightforward to show that the map $S^{2}(\std) \to \eno(\ste)$ def\/ined by $\al \to (\al, \dum)\big|_{S^{1}(\std)}$ is a Lie algebra isomorphism onto the symplectic Lie algebra $\sp(\std, \Om)$ (see Section~2 of~\cite{Kostant-weyl} for a detailed discussion).

\begin{Remark}\label{weylalgebraremark}
The Weyl algebra of the symplectic vector space $(\ste, \Om)$ is the quotient of the tensor algebra of $\ste$ by the ideal generated by elements of the form $u \tensor v - v \tensor u - \Om(u, v)$ for $u, v \in \ste$. The additive f\/iltration of the Weyl algebra at level $k$ is spanned by $k$-fold products of elements of $\ste$. Its associated graded algebra is isomorphic as a vector space to the symmetric algebra of $\ste$ and carries a Poisson bracket def\/ined by projecting the commutator of elements of level $k$ and $l$ onto level $k + l -2$. Applying this construction to $(\std, \Om)$ yields the algebraic bracket~\eqref{algebraicbracket}.
\end{Remark}

\begin{Remark}\label{prolongationremark}
Another construction of~\eqref{algebraicbracket} can be given in terms of the notion of prolongation. This is recalled following \cite{Guillemin}; see also \cite[Chapter~7.3]{Sternberg-differentialgeometry} and \cite[Chapter~I.1]{Kobayashi-transformationgroups}.

The f\/irst prolongation $\g^{(1)}$ of the subspace $\g \subset \hom(\ste, \stw)\simeq \stw \tensor \std$ is def\/ined by $\g^{(1)} = \{A \in \hom(\ste, \g) \simeq \g \tensor \std\colon A(u)v = A(v)u \,\,\text{for all}\,\, u, v \in \ste\}$, and, for $k \geq 2$, the $k$th prolongation~$\g^{(k)}$ of~$\g$ is def\/ined inductively by $\g^{(k)} = (\g^{(k-1)})^{(1)}$. As vector spaces,
\begin{gather}\label{prolong1}
\g^{(k)} = \big(\g \tensor \tensor^{k}(\std)\big) \cap \big(\stw \tensor S^{k+1}(\std)\big).
\end{gather}
When $\stw = \ste$, so $\g \subset \eno(\ste)$, def\/ine $\g^{(0)} = \g$ and $\g^{(-1)} = \ste$. Using the identif\/ication~\eqref{prolong1}, a~graded Lie bracket is def\/ined on the direct sum $\oplus_{k \geq -1}\g^{(k)}$ and its formal completion $\prod_{k \geq -1}\g^{(k)}$ as follows. Def\/ine an antisymmetric bilinear pairing $[\dum, \dum]\colon \ste \tensor S^{k+1}(\std) \times \ste \tensor S^{l+1}(\std) \to \ste \tensor S^{k+l+1}(\std)$ by setting
\begin{gather}\label{prolonglie}
[v_{1}\tensor s_{1}, v_{2}\tensor s_{2}] =v_{1} \tensor ((D_{v_{2}}s_{1})\sprod s_{2} ) - v_{2} \tensor (s_{1}\sprod (D_{v_{1}}s_{2})),
\end{gather}
for $v_{1}, v_{2} \in \ste$, $s_{1} \in S^{k}(\std)$, and $s_{2} \in S^{l}(\std)$, and extending linearly, where $D_{v}\colon S^{k}(\std) \to S^{k-1}(\std)$ is def\/ined by $D_{v}s = k\imt(v)s$ for $v \in \ste$ (the constant factor is chosen so that $D_{v}$ is a derivation of the symmetric product~$\sprod$). The bracket~\eqref{prolonglie} restricts to yield a bracket $[\dum, \dum]\colon \g^{(k)} \times \g^{(l)} \to \ste \tensor S^{k+l+1}(\std)$, and it is straightforward to check that the image is contained in $\g^{(k+l)}$ and that the resulting bracket satisf\/ies the Jacobi identity, so makes $\oplus_{k \geq -1}\g^{(k)}$ and $\prod_{k \geq -1}\g^{(k)}$ into Lie algebras. In the case $(\ste, \Om)$ is a symplectic vector space and $\g = \sp(\ste, \Om)$, this can be seen as follows. If the dif\/ferential of the f\/low of a vector f\/ield on $\ste$ preserves the symplectic frame bundle $\ste \times Sp(\ste, \Om)$ then the $(k+1)$st coef\/f\/icient of the Taylor expansion of the vector f\/ield takes values in $\g^{(k)}$. The Lie bracket is that induced by taking jets of Lie brackets of vector f\/ields. Since $\stw = \ste$, an element of $\g^{(0)}$ can be viewed as an endomorphism of $\ste$; the def\/inition~\eqref{prolonglie} is made so that the bracket $[\dum, \dum]$ agrees on $\g^{(0)}$ with the algebraic commutator of endomorphisms. That is $[A, B]_{i}\,^{j} = (A\circ B - B \circ A)_{i}\,^{j} = A_{p}\,^{j}B_{i}\,^{p} - B_{p}\,^{j}A_{i}\,^{p}$ for $A, B \in \g^{(0)} \subset \hom(\ste, \ste)$. An endomorphism $A_{i}\,^{j}$ of $(\ste, \Om)$ is inf\/initesimally symplectic if and only if $A_{[ij]} = 0$, so, using $\Om$, $\g$ is identif\/ied with $S^{2}(\std)$ equipped with the Lie bracket $[\al, \be]_{ij} = 2\al_{p(i}\be_{j)}\,^{p} = 2(\al, \be)_{ij}$ for $\al, \be \in S^{2}(\std)$. Likewise, using $\Om$ to identify the leading $\ste$ ($=\stw$) in~\eqref{prolong1} with $\std$, it follows from~\eqref{prolong1} that the $k$th prolongation $\g^{(k)}$ is identif\/ied with $S^{k+2}(V^{\ast})$. Then $\oplus_{k \geq -1}\g^{(k)}$ and $\prod_{k \geq -1}\g^{(k)}$ are identif\/ied with the ideals $\mathcal{S}_{1}(\std) \subset \mathcal{S}(\std)$ and $\hat{\mathcal{S}}_{1}(\std) \subset \hat{\mathcal{S}}(\std)$ and~\eqref{prolonglie} induces on these an algebraic Lie bracket $[\dum, \dum]$ of degree $-2$ with respect to the grading of $\mathcal{S}(\ste)$. The bracket $[\dum, \dum]$ is related to the bracket~\eqref{algebraicbracket} by $kl[\al,\be] = (k+l-2)(\al, \be)$ for $\al \in S^{k}(\std)$ and $\be \in S^{l}(\std)$.

The preceding can be recast in the following manner. The graded linear map $\eta\colon \mathcal{S}(\std) \to \mathcal{S}(\std)$ def\/ined by $\eta(\al) = k\al$ for $\al \in S^{k}(\std)$ restricts to a linear isomorphism on $\mathcal{S}_{1}(\std)$. Then
\begin{gather*}
\{0\} \longrightarrow \rea \stackrel{\imt}{\longrightarrow} (\mathcal{S}(\std), (\dum, \dum))\stackrel{\eta}{\longrightarrow}(\mathcal{S}_{1}(\std), [\dum, \dum]) \longrightarrow \{0\}
\end{gather*}
is a central extension of Lie algebras, where $\imt$ is the inclusion of $\rea$ as the central ideal $S^{0}(\std) \subset (\mathcal{S}(\std), (\dum, \dum))$. The map $\eta$ is an algebraic version of the operator associating to the polynomial function $\hat{\al}(u) = \al_{i_{1}\dots i_{k}}u^{i_{1}}\cdots u^{i_{k}}$ the Hamiltonian vector f\/ield $\hm_{\hat{\al}}^{i} = -k\al^{i}\,_{i_{1}\dots i_{k-1}}u^{i_{1}}\cdots u^{i_{k-1}}$. Its restriction to $\mathcal{S}_{1}(\std)$ is an isomorphism between $(\dum, \dum)$ and $[\dum, \dum]$.
\end{Remark}

\begin{Remark}\label{framebundleremark}
A horizontal distribution on a symplectic f\/ibration such that the corresponding parallel transport preserves the f\/iberwise symplectic structure is also called a \textit{symplectic connection} (see for example~\cite{Mcduff-Salamon}). The symplectic af\/f\/ine connections considered here are symplectic connections in this more general sense, although the converse is clearly false. For this reason (and also to honor their role in Fedosov's deformation quantization) what are here called \textit{symplectic connections} are sometimes called \textit{Fedosov connections}; here the notions are distinguished terminologically as \textit{linear} and \textit{nonlinear} symplectic connections.

The two notions encapsulate dif\/ferent points of view. Each is equivalent to the data of a~principal connection on what might be called the symplectic frame bundle of $M$, the point being that \textit{symplectic frame bundle} has two possible meanings, depending on whether \textit{symplectic} refers to the symplectic linear group or the symplectomorphism group of the reference symplectic vector space $(\ste, \Om)$. The linear symplectic frame bundle is the reduction of the usual linear frame bundle determined by restricting to symplectic frames. A much f\/labbier notion is obtained by instead considering as frames at $p \in M$ all symplectomorphisms from $(\ste, \Om)$ to $(T_{p}M, \Om)$ mapping $0 \in \ste$ to $0 \in T_{p}M$. Although to def\/ine such a nonlinear symplectic frame bundle rigorously requires restricting the class of symplectomorphisms considered, or working formally, e.g., with inf\/inite jets, morally there results a principal bundle with structure group an inf\/inite-dimensional group of (perhaps formal) symplectomorphisms of $(\ste, \Om)$, and, by construction, $\sfr$ is a reduction of this bundle. Correspondingly, there are two quite distinct notions of symmetries of $\symcon(M, \Om)$. The usual one is to consider principal bundle automorphisms of the linear symplectic frame bundle $\sfr$, regarded as a principal bundle for the f\/inite-dimensional group $G = \operatorname{Sp}(\ste, \Om)$ of linear symplectic transformations. On the other hand, the symplectic frame bundle~$\sfr$ can be regarded as a subbundle of a principal bundle with structure group some inf\/inite-dimensional Lie group~$\G$ of symplectomorphisms of~$(\ste, \Om)$ f\/ixing the origin.

The automorphism group of $\sfr$ comprises the $G$-equivariant bundle maps from $\sfr$ to itself. The \textit{group $\gge(\sfr)$ of gauge transformations} of~$\sfr$ is its subgroup comprising bundle automorphisms of~$\sfr$ covering the identity. A gauge transformation is naturally identif\/ied with a~map from~$\sfr$ to~$G$, equivariant with respect to the action of~$G$ on itself by conjugation, or, equivalently, with a~section of the bundle $\sfr \times_{G}G$ associated with~$\sfr$ by this action. Via this identif\/ication the group structure corresponds to f\/iberwise composition. An \textit{infinitesimal gauge transformation} is a~section of the adjoint bundle~$\Ad(\sfr)$. These form a Lie algebra~$\lge(\sfr)$ under f\/iberwise Lie bracket.

In the case of a frame bundle, a (inf\/initesimal) gauge transformation can be identif\/ied with a~section of the bundle $\eno(TM)$ of endomorphisms of the tangent bundle. Precisely, an element of~$\gge(\sfr)$ is identif\/ied with $g_{i}\,^{j} \in \Ga(\eno(TM))$ such that $g_{iq}g_{j}\,^{q} = g_{i}\,^{p}g_{j}\,^{q}\Om_{pq} = \Om_{ij}$. The inverse section $g^{-1}$ then satisf\/ies $(g^{-1})_{ij} = -g_{ji}$. Similarly, an inf\/initesimal gauge transformation of $\sfr$ is identif\/ied with a section $\al_{i}\,^{j} \in \Ga(\eno(TM))$ such that $\al_{[ij]} = 0$; in general it is more convenient to identify $\al$ with the corresponding element $\al_{ij}$ of $\ssec^{2}(M)$. Alternatively, the Lie algebra $\g$ is isomorphic as a $G$-module to $S^{2}(\std)$ via the map $A \to A^{\sflat}$ def\/ined by $A^{\sflat}(x, y) = \Om(Ax, y)$, and via this isomorphism $\Ga(\Ad(\sfr))$ is identif\/ied with $(\ssec^{2}(M), [\dum, \dum])$.

The preceding all makes formal sense with the nonlinear symplectic frame bundle in place of $\sfr$, and with the corresponding structure group $\G$ in place of $G$. Such notions can be given rigorous sense at the Lie algebra level. The formal analogue of the inf\/initesimal $\G$-gauge transformations is the Lie algebra $(\hssec_{2}(M), [\dum, \dum])$, corresponding to the f\/iberwise action of the truncated prolongation $(\prod_{i \geq 0}\g^{(i)}, [\dum, \dum])$ (as in Remark~\ref{prolongationremark}), comprising formal sums of covariant symmetric tensors of rank at least two.
\end{Remark}

\subsection{}
The pairing~\eqref{wpairing} on $\ssec^{k}(M)$ extends to a $\Symplecto(M, \Om)$-invariant bilinear pairing on $\hssec(M)$ by declaring $\ssec^{k}(M)$ and $\ssec^{l}(M)$ orthogonal if $k \neq l$. This pairing is graded symmetric in the sense that $\laa \al, \be \raa = (-1)^{|\al||\be|}\laa \be, \al \raa$ for homogeneous elements $\al$ and $\be$. If a function is regarded as a $0$-tensor, then $\laa \dum, \dum \raa$ agrees with the $L^{2}$ inner product $\laa f, g \raa = \int_{M}fg \,\Om$ on $\cinf(M)$.

A linear operator $\dop\colon \ssec^{p}(M) \to \ssec^{q}(M)$ has \textit{degree} $|\dop| = q-p$. More generally, a linear operator $\dop\colon \ssec(M) \to \ssec(M)$ has degree $|\dop| = p$ if $\P(\al) \in \ssec^{k+p}(M)$ whenever $\al \in \ssec^{k}(M)$. The \textit{$($formal$)$ adjoint} $\dop^{\ast}\colon \ssec^{q}(M) \to \ssec^{p}(M)$ of $\dop$ is the degree $-|\dop|$ operator def\/ined by $\laa \dop \al, \be\raa = (-1)^{|\al||\dop|}\laa \al, \dop^{\ast}\be\raa$. The choice of sign guarantees that $(\dop^{\ast})^{\ast} = \dop$ and $(\P\Q)^{\ast} = (-1)^{|\P||\Q|}\Q^{\ast}\P^{\ast}$.

For example, for $\al \in \ssec^{2}(M)$, $(\al, \dum)^{\ast} = -(\al, \dum)$, because the pairing $\laa\dum, \dum\raa$ is invariant with respect to the linear map $(\al, \dum)\colon \ssec(M) \to \ssec(M)$ in the sense that
\begin{gather}\label{algliepairing}
\laa (\al, \Pi_{1}), \Pi_{2} \raa + \laa \Pi_{1}, (\al, \Pi_{2})\raa = 0,\qquad \text{for}\quad \Pi_{1}, \Pi_{2} \in \ssec^{k}(M).
\end{gather}

\subsection{}
For $\nabla \in \symcon(M, \Om)$ and $\al\in \ssec^{k}(M)$ def\/ine $\dn\al_{i_{1}\dots i_{k+1}}$ by $\dn\al_{i_{1}\dots i_{k+1}}= 2\nabla_{[i_{1}}\al_{i_{2}]i_{3}\dots i_{k+1}}$, and def\/ine the degree $-1$ \textit{symplectic divergence} operator $\sd_{\nabla}\colon \hssec(M) \to \hssec(M)$ by
\begin{gather*}
\sd_{\nabla} \al_{i_{1}\dots i_{k-1}} = (-1)^{k-1}\tfrac{1}{2}(\dn\al)_{p}\,^{p}\,_{i_{1}\dots i_{k-1}}
= (-1)^{k-1}\nabla_{p}\al_{i_{1}\dots i_{k-1}}\,^{p}, \qquad \al \in \ssec^{k}(M).
\end{gather*}
The formal adjoint $\sd_{\nabla}^{\ast}\colon \ssec^{k}(M) \to \ssec^{k+1}(M)$ of $\sd_{\nabla}$ is given by
\begin{gather*}
\sd_{\nabla}^{\ast}\al_{i_{1}\dots i_{k+1}} =- \nabla_{(i_{1}}\al_{i_{2}\dots i_{k+1})} = - \nabla_{i_{1}}\al_{i_{2}\dots i_{k+1}} + \tfrac{k}{k+1}\dn\al_{i_{1}(i_{2}\dots i_{k+1})},\qquad \al \in \ssec^{k}(M).
\end{gather*}
When it is not necessary to indicate the dependence on $\nabla$ the subscripts are omitted and there are written $\sd$ and $\sd^{\ast}$ instead of $\sd_{\nabla}$ and $\sd^{\ast}_{\nabla}$. When $\sd^{\ast}$ is applied to functions the subscript is always omitted, because $\sd^{\ast}f = -df = \hm_{f}^{\sflat}$. Note that the Poisson bracket $\{f, g\}$ of $f, g, \in \cinf(M)$ is expressible as $\{f, g\}= (\sd^{\ast}f, \sd^{\ast}g)$.

Lemma~\ref{sdcommutationlemma} is needed in the proof of Lemma~\ref{poissonlemma}.
\begin{Lemma}\label{sdcommutationlemma} For $\nabla \in \symcon(M, \Om)$ and $\al \in \ssec^{k}(M)$,
\begin{align}
(k+1)\sd\sd^{\ast} \al_{i_{1}\dots i_{k}} + k\sd^{\ast}\sd\al_{i_{1} \dots i_{k}} &= (-1)^{k}\left(2kR_{(i_{1}}\,^{p}\al_{i_{2}\dots i_{k})p} - k(k-1)R^{p}\,_{(i_{1}i_{2}}\,^{q}\al_{i_{3}\dots i_{k})pq}\right)\nonumber\\
& = (-1)^{k}\left((\al, \ric)_{i_{1}\dots i_{k}} - k(k-1)R^{p}\,_{(i_{1}i_{2}}\,^{q}\al_{i_{3}\dots i_{k})pq}\right).\label{sdsdast}
\end{align}
\end{Lemma}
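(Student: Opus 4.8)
The plan is to expand both compositions $\sd\sd^{\ast}\al$ and $\sd^{\ast}\sd\al$ explicitly as contractions of second covariant derivatives of $\al$, form the stated linear combination, and convert the resulting commutators of covariant derivatives into curvature by the Ricci identity; the two canonical curvature tensors in \eqref{sdsdast} then emerge after applying the symplectic curvature trace relations, and the second equality is read off from the algebraic bracket \eqref{algebraicbracket}.

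First I would compute the two compositions separately. For $\al \in \ssec^{k}(M)$ one has $\sd^{\ast}\al_{i_1\dots i_{k+1}} = -\nabla_{(i_1}\al_{i_2\dots i_{k+1})}$, and applying $\sd$ (which contracts the newly introduced derivative against the last, raised slot) exhibits the crux of the bookkeeping: the index contracted by $\sd$ is itself one of the $k+1$ symmetrized slots produced by $\sd^{\ast}$. Splitting that symmetrization according to whether the outer derivative falls on the contracted slot or on a free slot writes $\sd\sd^{\ast}\al_{i_1\dots i_k}$ as the sum of $\tfrac{(-1)^{k+1}}{k+1}\Om^{pq}\nabla_{p}\nabla_{q}\al_{i_1\dots i_k}$ and $\tfrac{(-1)^{k+1}k}{k+1}\Om^{pq}\nabla_{p}\nabla_{(i_1}\al_{i_2\dots i_k)q}$, the latter with the derivatives in the order $\nabla_{p}\nabla_{(i_1}$. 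By contrast the definition of $\sd$ and $\sd^{\ast}$ gives the single term $\sd^{\ast}\sd\al_{i_1\dots i_k} = (-1)^{k}\Om^{pq}\nabla_{(i_1}\nabla_{|p|}\al_{i_2\dots i_k)q}$, with the derivatives in the opposite order.

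Next I would form $(k+1)\sd\sd^{\ast}\al + k\sd^{\ast}\sd\al$. The weights $k+1$ and $k$ are chosen precisely so that, after accounting for the relative sign $(-1)^{k+1}$ versus $(-1)^{k}$, the two ``mixed'' terms (differing only in the order of $\nabla_{p}$ and $\nabla_{(i_1}$) combine into the commutator $(-1)^{k}k\,\Om^{pq}\nabla_{(i_1}\nabla_{|p|}\al_{i_2\dots i_k)q} - (-1)^{k}k\,\Om^{pq}\nabla_{|p|}\nabla_{(i_1}\al_{i_2\dots i_k)q}$; any other coefficients would leave behind a genuine second-order operator. The remaining term $(-1)^{k}\Om^{pq}\nabla_{p}\nabla_{q}\al_{i_1\dots i_k}$ is already a commutator, since $\Om^{pq}$ is antisymmetric. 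Thus every surviving term is curvature, and I would apply the torsion-free Ricci identity $2\nabla_{[a}\nabla_{b]}\al_{c_1\dots c_k} = -\sum_{m} R_{ab c_m}{}^{r}\al_{c_1\dots r\dots c_k}$ (with $r$ in the $m$th slot) to each.

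The final step reduces the various curvature contractions to the two tensors in \eqref{sdsdast}. The term $\Om^{pq}\nabla_{p}\nabla_{q}\al$ contracts $\Om^{pq}$ against the first index pair of $R$, yielding $\Om^{pq}R_{pq i_m}{}^{r} = 2R_{i_m}{}^{r}$ via $R_{p}{}^{p}{}_{ij}=2R_{ij}$; the commutator term contributes one piece in which $\Om^{pq}$ meets the inner pair of $R$, giving $\Om^{pq}R_{i_1 p q}{}^{r} = -R_{i_1}{}^{r}$ via $R_{ip}{}^{p}{}_{j}=-R_{ij}$, and these two Ricci-type pieces add to $2kR_{(i_1}{}^{p}\al_{i_2\dots i_k)p}$. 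The residual contributions, in which $R$ is contracted against two indices of $\al$, give $-k(k-1)R^{p}{}_{(i_1 i_2}{}^{q}\al_{i_3\dots i_k)pq}$ after using the antisymmetry of $R_{abcd}$ in $(a,b)$ together with the symmetry of $\al$ to move the free indices into the middle two slots of $R$, the factor $k-1$ counting the equal terms coming from the free slots of $\al$. I expect this last reduction to be the main obstacle, chiefly because of the sign bookkeeping through repeated raising and lowering by the antisymmetric $\Om^{pq}$ and the interaction of the $(-1)^{k}$ factors with the symmetrizations. The second equality in \eqref{sdsdast} is then immediate: specializing \eqref{algebraicbracket} to $l=2$ with $\be = \ric$ gives $(\al,\ric)_{i_1\dots i_k} = 2kR_{(i_1}{}^{p}\al_{i_2\dots i_k)p}$.
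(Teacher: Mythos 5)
Your proposal is correct and follows essentially the same route as the paper: split the symmetrization in $\sd\sd^{\ast}$ into the $\nabla^{p}\nabla_{p}\al$ piece and the mixed piece $k\nabla^{p}\nabla_{(i_{1}}\al_{i_{2}\dots i_{k})p}$, observe that the weights $k+1$ and $k$ make the mixed second-order terms combine into a commutator, convert everything to curvature via the Ricci identity, and reduce the traces using $R_{p}{}^{p}{}_{ij}=2R_{ij}$, with the second equality read off from the $l=2$ case of~\eqref{algebraicbracket}. The paper organizes the same computation by commuting the derivatives in $\sd^{\ast}\sd\al$ first (its equation~\eqref{presd}) so that the mixed terms cancel outright, but the content is identical.
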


\begin{proof}
The Ricci identity yields
\begin{align}
(-1)^{k-1}\sd^{\ast}\sd\al_{i_{1}\dots i_{k}} & = \nabla_{(i_{1}}\nabla^{p}\al_{i_{2}\dots i_{k})p} \nonumber\\
&= \nabla^{p}\nabla_{(i_{1}}\al_{i_{2}\dots i_{k})p} + (k-1)R^{p}\,_{(i_{1}i_{2}}\,^{q}\al_{i_{3}\dots i_{k})pq} - R^{p}\,_{(i_{1}}\al_{i_{2}\dots i_{k})p}.
\label{presd}
\end{align}
Combining~\eqref{presd} with
\begin{align*}
(-1)^{k}(k+1)\sd \sd^{\ast}\al_{i_{1}\dots i_{k}}& = (k+1)\nabla^{p}\nabla_{(i_{1}}\al_{i_{2}\dots i_{k}p)}= k\nabla^{p}\nabla_{(i_{1}}\al_{i_{2}\dots i_{k})p} + \nabla^{p}\nabla_{p}\al_{i_{1}\dots i_{k}} \\
& = k\nabla^{p}\nabla_{(i_{1}}\al_{i_{2}\dots i_{k})p} + k R^{p}\,_{(i_{1}}\al_{i_{2}\dots i_{k})p},
\end{align*}
yields~\eqref{sdsdast}.
\end{proof}

\subsection{}
The Schouten bracket of contravariant symmetric tensors on a smooth manifold $M$ is def\/ined by regarding such tensors as functions on the cotangent bundle $\ctm$ polynomial in the f\/ibers and pairing them via the canonical Poisson bracket. When $M$ is symplectic the resulting Poisson algebra structure can be transferred via the symplectic form to the algebra of covariant symmetric tensors. The resulting degree $-1$ pairing $\shl\dum,\dum\shr \colon \ssec^{k}(M) \times \ssec^{l}(M) \to \ssec^{k+l-1}(M)$ is expressible in terms of any symplectic connection $\nabla \in \symcon(M, \Om)$ by
\begin{gather}\label{schouten}
\shl\al, \be\shr_{i_{1}\dots i_{k+l-1}} = -k\al_{p(i_{1}\dots i_{k-1}}\nabla^{p}\be_{i_{k}\dots i_{k+l-1})} + l\be_{p(i_{1}\dots i_{l-1}}\nabla^{p}\al_{i_{l}\dots i_{k+l-1})}.
\end{gather}
By the def\/inition~\eqref{schouten}, the Schouten bracket of functions is trivial. In~\eqref{schouten} the signs are chosen so that $\shl X^{\sflat}, Y^{\sflat}\shr = [X, Y]^{\sflat}$ for $X, Y \in \Ga(TM)$. More generally, if $X \in \symplecto(M, \Om)$ then $\shl X^{\sflat}, \al\shr = \lie_{X}\al$. In particular, $\lie_{\hm_{f}}\al = \shl\al, df\shr = \shl \sd^{\ast}f, \al \shr$ for $f \in \cinf(M)$. Since $\symplecto(M, \Om)$ is a Lie algebra, the Schouten bracket of closed one-forms is a closed one-form.
The operator $\sd^{\ast}$ is a homomorphism from $(\cinf(M), \{\dum, \dum\})$ to $(\ham(M,\Om), \shl\dum, \dum\shr)$, as
\begin{gather}\label{sdschouten}
\shl \sd^{\ast}f, \sd^{\ast}g\shr = \shl df, dg\shr = - d\{f, g\} = \sd^{\ast}\{f, g\},\qquad \text{for}\quad f, g \in \cinf(M).
\end{gather}
Since the pairing~\eqref{schouten} does not depend on the choice of $\nabla \in \symcon(M, \Om)$, it should be regarded as an object of a dif\/ferential topological character attached to the symplectic manifold. In particular, the Schouten bracket is $\Symplecto(M, \Om)$-equivariant. The corresponding inf\/initesimal statement, the inf\/initesimal symplectomorphism equivariance of the Schouten bracket, is equivalent to the statement that the Lie derivative $\lie_{X}$ along $X \in \symplecto(M, \Om)$ is a derivation of $\shl \dum, \dum \shr$. Alternatively this is a consequence of the identity $\shl X^{\sflat}, \shl\al, \be\shr\shr = \lie_{X}\shl\al, \be\shr$ in conjunction with the Jacobi identity.

\subsection{}
Two Lie brackets $[\dum, \dum]$ and $\shl\dum, \dum\shr$ on a vector space $\g$ are \textit{compatible} if any linear combination of them is again a Lie bracket. For background on compatible Lie brackets see, for example, \cite{Bolsinov,Bolsinov-Borisov,Golubchik-Sokolov}, and the references therein. A~straightforward computation shows that compatibility of two Lie brackets is equivalent to the condition that each of the brackets is a~cocyle with respect to the other; this means $\shl\dum, \dum\shr$ is a cocyle of the cochain complex of $(\g, [\dum, \dum])$ with coef\/f\/icients in its adjoint representation, and likewise with $[\dum, \dum]$ and $\shl\dum, \dum\shr$ interchanged. From this second characterization it follows that if one of two Lie brackets is a~coboundary in the Lie algebra cohomology of the other, then the brackets are compatible.

Lemma~\ref{compatibilitylemma} gives an interpretation of $\sd^{\ast}$ in terms of the Lie algebra cohomology of $(\dum, \dum)$.

\begin{Lemma}\label{compatibilitylemma}
Let $(M, \Om)$ be a symplectic manifold. For $\nabla \in \symcon(M, \Om)$ the associated operator $\sd^{\ast} = \sd_{\nabla}^{\ast}$ satisfies:
\begin{enumerate}[$1.$]\itemsep=0pt
\item \label{sdastder} $\sd^{\ast}$ is a derivation of the symmetric product $\sprod$. That is,
\begin{gather*}
\sd^{\ast}(\al \sprod \be) = \sd^{\ast}\al \sprod \be + \al \sprod \sd^{\ast}\be \qquad \text{for} \quad \al \in\ssec^{k}(M)\quad \text{and}\quad \be \in \ssec^{l}(M).
\end{gather*}
\item \label{twobrackets}
The Schouten bracket $\shl\dum, \dum\shr$ is the coboundary of $\sd^{\ast}$ in the Lie algebra cochain complex of the algebraic bracket $(\dum, \dum)$ with coefficients in its adjoint representation. That is,
\begin{gather}\label{twobracketsidentity}
\shl\al, \be\shr = (\sd^{\ast}\al, \be) + (\al, \sd^{\ast}\be) - \sd^{\ast}(\al, \be)\qquad \text{for} \quad \al \in\ssec^{k}(M)\quad \text{and}\quad \be \in \ssec^{l}(M).
\end{gather}
Consequently, the Schouten bracket $\shl\dum,\dum\shr$ and the algebraic Lie bracket $(\dum, \dum)$ are compatible Lie brackets on $\hssec(M)$. Explicitly this means
\begin{align}
0 & =\cyclearg{\al, \be, \ga}\left[ (\shl\al, \be\shr, \ga) + \shl(\al, \be), \ga)\shr\right]\nonumber \\
& = (\shl\al, \be\shr, \ga) + (\shl\be, \ga\shr, \al) + (\shl\ga, \al\shr, \be)
  + \shl(\al, \be), \ga\shr+ \shl(\be, \ga), \al)\shr+ \shl(\ga, \al), \be\shr.\label{liecompatible}
\end{align}
\item For $\bnabla = \nabla + \Pi_{ij}\,^{k} \in \symcon(M, \Om)$ the associated operator $\sd_{\bnabla}^{\ast}$ is cohomologous to $\sd_{\nabla}^{\ast}$ in the Lie algebra cochain complex of the algebraic bracket $(\dum, \dum)$.
\end{enumerate}
\end{Lemma}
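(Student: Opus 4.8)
The three parts are essentially independent; I would treat them in order. \emph{Part 1} is just the Leibniz rule. Since $\sd^{\ast}$ is minus the full symmetrization of $\nabla$ and $\sprod$ is the full symmetrization of the tensor product, applying $-\nabla_{(i_{1}}$ to $(\al\sprod\be)_{i_{2}\dots i_{k+l+1})}$ and using the product rule for $\nabla$ produces $\sd^{\ast}\al\sprod\be + \al\sprod\sd^{\ast}\be$ once one checks that the outer symmetrization absorbs the inner symmetrizations of $\sprod$ with the correct binomial weights. I would verify that combinatorial factor once and regard the rest as routine.

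\emph{Part 2.} The substance is the pointwise identity \eqref{twobracketsidentity}; the compatibility \eqref{liecompatible} then follows formally. Rather than expand \eqref{twobracketsidentity} in indices for general degrees, I would exploit that all three operations are compatible with the commutative product $\sprod$. Both $(\dum,\dum)$ and $\shl\dum,\dum\shr$ are Poisson brackets for $\sprod$, i.e.\ biderivations (for $\shl\dum,\dum\shr$ this holds by construction, it being the canonical Poisson bracket of fiberwise-polynomial functions on $\ctm$), and by Part~1 the operator $\sd^{\ast}$ is a $\sprod$-derivation. The key abstract step is: for any $\sprod$-derivation $\phi$ and any $\sprod$-Poisson bracket $(\dum,\dum)$, the map
\begin{gather*}
(\al,\be)\longmapsto (\phi\al,\be) + (\al,\phi\be) - \phi(\al,\be)
\end{gather*}
is again a $\sprod$-biderivation. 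This is a one-line computation: expanding the expression on $\al\sprod\al'$ in the first slot and using the derivation property of $\phi$ together with the biderivation property of $(\dum,\dum)$, the cross terms $\phi\al\sprod(\al',\be)$ cancel, leaving the Leibniz form in the first slot; antisymmetry handles the second slot.

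Granting this, the difference $D(\al,\be) = \shl\al,\be\shr - \bigl[(\sd^{\ast}\al,\be) + (\al,\sd^{\ast}\be) - \sd^{\ast}(\al,\be)\bigr]$ is a $\sprod$-biderivation of degree $-1$. Because \eqref{twobracketsidentity} is an identity of bidifferential operators it may be checked locally, where $\mathcal{S}(\ctm)$ is generated over $\cinf(M)$ by one-forms; a $\sprod$-biderivation is determined by its values on generators, so it suffices to prove $D=0$ on $\ssec^{0}(M)$ and $\ssec^{1}(M)$. Since $D$ lowers degree by one it vanishes automatically on $\ssec^{0}\times\ssec^{0}$, so only two cases remain. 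For a one-form $\theta$ and a function $g$ the terms $(\sd^{\ast}\theta,g)$ and $\sd^{\ast}(\theta,g)$ vanish on degree grounds and one is reduced to $\shl\theta,g\shr = (\theta,\sd^{\ast}g)$, immediate from \eqref{schouten} and the formula $(\al,\be)=\Om^{ij}\al_{i}\be_{j}$ for one-forms. For two one-forms $\theta,\phi$ the three terms of the coboundary expand into the four contractions $\phi^{p}\nabla_{p}\theta_{m}$, $\theta^{p}\nabla_{p}\phi_{m}$, $\phi^{p}\nabla_{m}\theta_{p}$, $\theta^{p}\nabla_{m}\phi_{p}$ (here $m$ is the single free index); the two contractions with the derivative in a free direction cancel against $-\sd^{\ast}(\theta,\phi)$, and the remaining two reassemble into $\shl\theta,\phi\shr$. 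I expect this two-one-form bookkeeping to be the main obstacle, not because it is deep but because the antisymmetry of $\Om$ used to raise and lower the contracted index makes the signs delicate. Once \eqref{twobracketsidentity} is established, $\shl\dum,\dum\shr$ is a coboundary, hence a cocycle, in the Lie algebra cohomology of $(\dum,\dum)$, so by the criterion recalled just before the lemma the two brackets are compatible; \eqref{liecompatible} is precisely this cocycle condition (equivalently, the cross-terms in the Jacobi identity for $s(\dum,\dum)+t\shl\dum,\dum\shr$).

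\emph{Part 3.} Write $\bnabla = \nabla + \Pi$ with $\Pi\in\ssec^{3}(M)$. Since the difference tensor acts on each covariant slot and the $k$ resulting correction terms coincide after symmetrization, one finds $(\sd_{\bnabla}^{\ast}-\sd_{\nabla}^{\ast})\al_{i_{1}\dots i_{k+1}} = k\,\Pi_{(i_{1}i_{2}}{}^{p}\al_{i_{3}\dots i_{k+1})p}$ for $\al\in\ssec^{k}(M)$. Comparing with \eqref{algebraicbracket}, whose degree factor here is $3k$, identifies the right-hand side with $(\al,\tfrac13\Pi)$, which is exactly the coboundary of the $0$-cochain $\tfrac13\Pi\in\hssec(M)$ in the Lie algebra cochain complex of $(\dum,\dum)$. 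Hence $\sd_{\bnabla}^{\ast}$ and $\sd_{\nabla}^{\ast}$ differ by a coboundary and are cohomologous. The only delicate point is pinning down the constant $\tfrac13$, which is forced by the factor $kl$ in \eqref{algebraicbracket}.
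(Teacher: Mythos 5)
Your proposal is correct, and parts 1 and 3 essentially reproduce the paper's argument: the paper also establishes part 3 by computing the transformation law $\sd^{\ast}_{\bnabla}\ga = \sd^{\ast}_{\nabla}\ga + \tfrac{1}{3}(\ga, \Pi)$ (its equation~\eqref{sdasttransform}) and reading it as exactness of $\sd^{\ast}_{\bnabla} - \sd^{\ast}_{\nabla}$. For part 2, however, you take a genuinely different route. The paper proves~\eqref{twobracketsidentity} by a direct index computation valid for arbitrary degrees $k$ and $l$: it expands $\sd^{\ast}(\al,\be)$ and $(\al,\sd^{\ast}\be)$ from the definition of the algebraic bracket and matches the result term by term against the formula~\eqref{schouten} for the Schouten bracket. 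You instead observe that both brackets are biderivations of $\sprod$ and that the coboundary of a $\sprod$-derivation against a $\sprod$-biderivation is again a $\sprod$-biderivation, so the difference of the two sides of~\eqref{twobracketsidentity} is a biderivation of degree $-1$ and, being local, is determined by its values on the generators $\cinf(M)$ and $\ssec^{1}(M)$; this reduces the identity to the two low-degree cases you check explicitly (and your index bookkeeping in the two-one-form case is right). Your reduction buys a conceptual explanation of why the identity holds and confines the index gymnastics to degree at most one, at the cost of having to justify the auxiliary facts it leans on --- the biderivation property of $(\dum,\dum)$ (verified in the paper just after~\eqref{algebraicbracket}), the Poisson property of the Schouten bracket, and the one-line lemma that coboundaries of derivations are biderivations --- whereas the paper's computation is self-contained if less illuminating. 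The deduction of compatibility and of~\eqref{liecompatible} from the coboundary statement is the same in both treatments.
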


\begin{proof} The identity~\eqref{sdastder} follows from a straightforward computation. The identity~\eqref{twobrackets} follows by comparing~\eqref{schouten} with the identities
\begin{gather*}
\sd^{\ast}(\al, \be)_{i_{1}\dots i_{k+l-1}} = -kl\big(\al_{p(i_{1}\dots i_{k-1}}\nabla_{i_{k}}\be_{i_{k+1}\dots i_{k+l-1})}\,^{p} -\be_{p(i_{1}\dots i_{l-1}}\nabla_{i_{l}}\al_{i_{l+1}\dots i_{k+l-1})}\,^{p}\big),\\
(\al, \sd^{\ast}\be)_{i_{1}\dots i_{k+l-1}} = -k\al_{p(i_{1}\dots i_{k-1}}\nabla^{p}\be_{i_{k}\dots i_{k+l-1})} - kl \al_{p(i_{1}\dots i_{k-1}}\nabla_{i_{k}}\be_{i_{k+1}\dots i_{k+l-1})}\,^{p},
\end{gather*}
which follow straightforwardly from the def\/inition~\eqref{algebraicbracket}. Since $\shl\dum,\dum\shr$ is a coboundary with respect to $(\dum,\dum)$ it is automatically compatible with $(\dum, \dum)$. By claim~\eqref{twobrackets}, $\sd^{\ast}_{\bnabla} - \sd^{\ast}_{\nabla}$ is closed in the Lie algebra cochain complex of the algebraic bracket $(\dum, \dum)$. For $\Pi \in T_{\nabla}\symcon(M, \Om)$, $\bnabla = \nabla + \Pi, \nabla\in \symcon(M, \Om)$, and $\ga \in \ssec^{k}(M)$, there holds
\begin{gather}\label{sdasttransform}
\sd^{\ast}_{\bnabla}\ga_{i_{1}\dots i_{k+1}} = \sd^{\ast}_{\nabla}\ga_{i_{1}\dots i_{k+1}} + \tfrac{1}{3}(\ga, \Pi)_{i_{1}\dots i_{k+1}}.
\end{gather}
The identity~\eqref{sdasttransform} means that $\sd^{\ast}_{\bnabla} - \sd^{\ast}_{\nabla}$ is exact, that is that $\sd^{\ast}_{\bnabla}$ and $\sd^{\ast}_{\nabla}$ are cohomologous in the Lie algebra cochain complex of the algebraic bracket $(\dum, \dum)$.
\end{proof}

\begin{Lemma}\label{lieisolemma}
For $s, t \in \reat$ let $[\dum, \dum]_{s, t} = s(\dum, \dum) + t\shl\dum, \dum\shr$. If $s$, $\bar{s}$, $t$, and $\bar{t}$ are nonzero, the Lie brackets $[\dum, \dum]_{s, t}$ and $[\dum, \dum]_{\bar{s}, \bar{t}}$ on $\hssec(M)$ are isomorphic.
\end{Lemma}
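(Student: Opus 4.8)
The plan is to exploit the fact that the two brackets are \emph{homogeneous of different degrees} with respect to the grading of $\hssec(M)$ by symmetric rank: the algebraic bracket $(\dum,\dum)$ has degree $-2$ and the Schouten bracket $\shl\dum,\dum\shr$ has degree $-1$. This mismatch is precisely what lets a two-parameter family of diagonal rescalings act transitively on the admissible pairs $(s,t)$. Concretely, for nonzero reals $a$ and $\lambda$ I would introduce the linear map $\Phi = \Phi_{a,\lambda}\colon \hssec(M) \to \hssec(M)$ determined on homogeneous elements by $\Phi(\al) = a\lambda^{|\al|}\al$ for $\al \in \ssec^{k}(M)$, $|\al| = k$, and extended to formal sums. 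Since $\Phi$ acts as a nonzero scalar on each graded piece, it is a linear automorphism of $\hssec(M)$, diagonal with respect to the direct-product structure, with inverse $\Phi_{1/a,\,1/\lambda}$. Recall that $[\dum,\dum]_{s,t}$ is a Lie bracket for every $(s,t)$ by the compatibility established in Lemma~\ref{compatibilitylemma}, so the statement to be proved is genuinely about isomorphism of Lie algebras.

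Next I would record how $\Phi$ intertwines each bracket, using only bilinearity and the stated degrees. For $\al \in \ssec^{k}(M)$ and $\be \in \ssec^{l}(M)$, bilinearity gives $(\Phi\al,\Phi\be) = a^{2}\lambda^{k+l}(\al,\be)$ and $\shl\Phi\al,\Phi\be\shr = a^{2}\lambda^{k+l}\shl\al,\be\shr$, whereas $\Phi\big((\al,\be)\big) = a\lambda^{k+l-2}(\al,\be)$ and $\Phi\big(\shl\al,\be\shr\big) = a\lambda^{k+l-1}\shl\al,\be\shr$, because $(\al,\be) \in \ssec^{k+l-2}(M)$ while $\shl\al,\be\shr \in \ssec^{k+l-1}(M)$. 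Comparing the identity $\Phi\big([\al,\be]_{s,t}\big) = [\Phi\al,\Phi\be]_{\bar s,\bar t}$ coefficient by coefficient in $(\al,\be)$ and in $\shl\al,\be\shr$, I find that $\Phi$ is a Lie algebra homomorphism from $(\hssec(M),[\dum,\dum]_{s,t})$ to $(\hssec(M),[\dum,\dum]_{\bar s,\bar t})$ exactly when $s = a\lambda^{2}\bar s$ and $t = a\lambda\,\bar t$. Since $s$, $t$, $\bar s$, $\bar t$ are all nonzero, I can solve: dividing the two relations gives $\lambda = (s\bar t)/(\bar s t) \neq 0$ and then $a = t/(\lambda\bar t) \neq 0$. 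With this choice $\Phi$ is a bijective Lie algebra homomorphism, hence an isomorphism, which proves the claim.

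This is essentially a bookkeeping computation, so I do not expect a serious obstacle; the one point deserving care is conceptual rather than technical. It is the \emph{difference} in the degrees of the two brackets ($-2$ versus $-1$) that lets the overall scalar $a$ and the degree weight $\lambda$ be tuned independently, and hence lets a single diagonal map realize every admissible rescaling of $(s,t)$; had both brackets shared a common degree, only the ray through $(s,t)$ would be reachable and the conclusion would fail. I would also note that no case analysis over the quadrants of $\reat \times \reat$ is needed, since $\lambda$ ranges over all of $\reat$ and the signs take care of themselves. Finally, it is worth remarking in passing that the specialization $a = 1$ gives the pure grading automorphism $\Phi_{1,\lambda}$, exhibiting the isomorphism $[\dum,\dum]_{s,t} \cong [\dum,\dum]_{\lambda^{2}s,\,\lambda t}$, while $\lambda = 1$ recovers the elementary fact that rescaling a Lie bracket by a nonzero constant produces an isomorphic Lie algebra; the lemma is the combination of these two moves.
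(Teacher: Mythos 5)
Your proposal is correct and is essentially the paper's own argument: the paper uses the maps $\Theta^{m}_{t}$ acting by $t^{k-m}$ on the degree-$k$ part and composes $\Theta^{2}_{t^{-1}}\circ\Theta^{1}_{s}$ to reach $[\dum,\dum]_{1,1}$, which is exactly your two-parameter diagonal family $\Phi_{a,\lambda}$ with $a=t^{2}/s$, $\lambda=s/t$ specialized to $(\bar s,\bar t)=(1,1)$. Your coefficient conditions $s=a\lambda^{2}\bar s$, $t=a\lambda\bar t$ match, and the observation that the degree mismatch ($-2$ versus $-1$) is what makes the system solvable is the right conceptual point.
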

\begin{proof}\looseness=-1
For $m \in \integer$ and $t \in \reat$, def\/ine $\Theta^{m}_{t}\colon \hssec(M) \to \hssec(M)$ by $\Theta^{m}_{t}(\al)_{k} = t^{k-m}\al_{k}$. The maps $\Theta_{t}^{m} \in \eno(\hssec(M))$ are linear isomorphisms such that $\Theta_{t}^{2}$ preserves~$(\dum, \dum)$ and $\Theta_{t}^{1}$ preserves~$\shl\dum, \dum\shr$. For $t, s \in \reat$, the linear isomorphism $\Psi_{s, t}\! = \!\Theta^{2}_{t^{-1}}\circ \Theta^{1}_{s}$ satisf\/ies $[\Psi_{s, t}(\al), \Psi_{s, t}(\be)]_{1, 1}\!=\!\Psi_{s, t}( [\al, \be]_{s, t})$, hence $\Psi_{s, t}$ is an isomorphism from $(\hssec(M), [\dum, \dum]_{s, t})$ to $(\hssec(M), [\dum, \dum]_{1, 1})$ if \mbox{$s, t \in \reat$}.
\end{proof}

The brackets $[\dum, \dum]_{s, t}$ interpolate between $(\dum, \dum)$ and $\shl\dum,\dum\shr$. Lemma~\ref{lieisolemma} means that linear combinations of $(\dum, \dum)$ and $\shl\dum, \dum\shr$ in which both brackets f\/igure nontrivially are all isomorphic. A convenient such linear combination is the bracket
\begin{gather}\label{compatbracket}
[\al, \be] = (\al, \be) + \shl\al, \be\shr.
\end{gather}
With respect to $[\dum, \dum]$, the subspace $\hssec_{2}(M)$ is a subalgebra of $\hssec(M)$, and the subspaces $\hssec_{k}(M) \subset \hssec_{2}(M)$ are Lie ideals. Since $\Psi_{s, t}$ preserves the f\/iltration of $\hssec(M)$ by the subspaces $\hssec_{k}(M)$, these, and other similar, statements that depend only on the f\/iltration, are valid also for the Lie brackets $[\dum, \dum]_{s, t}$.

Let $\al_{i} \in \ssec^{i}(M)$ be the projection onto $\ssec^{i}(M)$ of $\al \in \hssec(M)$.

\begin{Lemma}\label{hamseclemma}
Let $(M, \Om)$ be a symplectic manifold. With respect to the bracket $[\dum, \dum]$ of~\eqref{compatbracket} and the symmetric product $\sprod$, the subspace
\begin{gather}\label{hamsecdefined}
\hhamsec(M) = \big\{\al \in \hssec(M)\colon \al_{1} = -\sd^{\ast}\al_{0}\big\}
\end{gather}
is a Poisson subalgebra of $(\hssec(M), [\dum, \dum])$. For $k \geq 0$, let $\hhamsec_{k}(M) = \{\al \in \hhamsec(M)\colon \al_{i} = 0 \,\,\text{if}$ $i < k\}$ $($so $\hhamsec_{k}(M) = \hssec_{k}(M) $ for $k \geq 2)$. For $k \geq 0$ and $l \geq 2$, $[\hhamsec_{k}(M), \hhamsec_{l}(M)] \subset \hhamsec_{k+l-2}(M)$. In particular, for $k\geq 2$, $(\hhamsec_{k}(M), [\dum, \dum], \sprod)$ is a Poisson ideal in $(\hhamsec(M), [\dum, \dum], \sprod)$. Define $\pi\colon \hhamsec(M) \to \cinf(M)$ by $\pi(\al) = -\al_{0}$. The sequence
\begin{gather}\label{hamsplit}
\{0\} \longrightarrow \big(\hhamsec_{2}(M), [\dum, \dum], \sprod\big) \longrightarrow \big(\hhamsec(M), [\dum, \dum], \sprod\big) \stackrel{\pi}{\longrightarrow} \big(\cinf(M), \{\dum, \dum\}\big) \longrightarrow \{0\}
\end{gather}
of Poisson algebras is exact and $\iota\colon (\cinf(M), \{\dum, \dum\}) \to (\hhamsec(M), [\dum, \dum])$ defined by $\iota(f) = -f + \sd^{\ast}f$ is an injective Lie algebra homomorphism such that $\pi \circ \iota$ is the identity. That is, the sequence~\eqref{hamsplit} splits and $\hhamsec(M)$ is the semidirect product of the Lie subalgebra
\begin{gather*}
\hinf(M) = \iota\big(\cinf(M)\big) = \big\{\al \in \hssec(M)\colon \al_{1} = -\sd^{\ast}\al_{0},\, \al_{i} = 0 \,\, \text{for}\,\, i \geq 2\big\}
\end{gather*}
and the Poisson ideal $\hhamsec_{2}(M)$.
\end{Lemma}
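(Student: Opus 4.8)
The plan is to verify the several assertions of Lemma~\ref{hamseclemma} in turn, exploiting the compatibility of the two brackets (Lemma~\ref{compatibilitylemma}) and the fact that $\sd^{\ast}$ is a derivation of $\sprod$. The main structural claim is that $\hhamsec(M)$, defined by the single linear constraint $\al_{1} = -\sd^{\ast}\al_{0}$, is closed under $[\dum, \dum]$ and under $\sprod$. Since the constraint involves only the degree-$0$ and degree-$1$ parts, and $\sd^{\ast}$ shifts degree by $+1$, I expect everything to reduce to a computation in low degree.

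First I would record the filtration estimate. For $\al \in \hssec_{k}(M)$ and $\be \in \hssec_{l}(M)$ both $(\al,\be)$ and $\shl\al,\be\shr$ raise total degree: the algebraic bracket $(\dum,\dum)$ has degree $-2$ and the Schouten bracket $\shl\dum,\dum\shr$ has degree $-1$, so $[\hssec_{k},\hssec_{l}] \subset \hssec_{k+l-2}(M)$, giving $[\hhamsec_{k}(M),\hhamsec_{l}(M)]\subset \hssec_{k+l-2}(M)$ for $k\geq 0,\ l\geq 2$; one then checks the result lands in $\hhamsec_{k+l-2}(M)$ rather than merely $\hssec_{k+l-2}(M)$. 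The main point to verify is the closure of $\hhamsec(M)$ itself: given $\al,\be\in\hhamsec(M)$, I must show $[\al,\be]_{1}=-\sd^{\ast}[\al,\be]_{0}$. Writing $\ga=[\al,\be]$, the degree-$0$ part $\ga_{0}$ can only come from the Schouten bracket of the two degree-$1$ parts (the algebraic bracket of degree-$1$ elements lands in degree $0$ too, but yields $(\al_{1},\be_{1})=\Om^{ij}(\al_{1})_{i}(\be_{1})_{j}$), so I compute $\ga_{0}$ and $\ga_{1}$ explicitly from $\al_{0},\al_{1}=-\sd^{\ast}\al_{0}$ and $\be_{0},\be_{1}=-\sd^{\ast}\be_{0}$, using $\sd^{\ast}f=-df$, the identity $\{f,g\}=(\sd^{\ast}f,\sd^{\ast}g)$, and~\eqref{sdschouten}, namely $\shl\sd^{\ast}f,\sd^{\ast}g\shr=\sd^{\ast}\{f,g\}$. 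The constraint to be checked at the top of the filtration should then fall out of these two identities together with the derivation property~\eqref{sdastder}.

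Next I would treat the splitting. Exactness of~\eqref{hamsplit} is immediate: $\pi(\al)=-\al_{0}$ is surjective with kernel exactly $\hhamsec_{2}(M)$, by definition of the constraint. That $\iota(f)=-f+\sd^{\ast}f$ is a Lie homomorphism is precisely the content of~\eqref{sdschouten}: computing $[\iota(f),\iota(g)]$, the degree-$0$ term is $(\sd^{\ast}f,\sd^{\ast}g)=\{f,g\}$ and the degree-$1$ term is $\shl\sd^{\ast}f,\sd^{\ast}g\shr=\sd^{\ast}\{f,g\}$ (the cross terms $(f,\sd^{\ast}g)$ etc.\ vanish since the algebraic bracket with a degree-$0$ element is zero, and the Schouten bracket of functions is trivial), so $[\iota(f),\iota(g)]=-\{f,g\}+\sd^{\ast}\{f,g\}=\iota(\{f,g\})$. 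The relation $\pi\circ\iota=\mathrm{Id}$ is clear, giving the semidirect-product decomposition, and one notes $\hinf(M)=\iota(\cinf(M))$ is then a subalgebra with $\hhamsec_{2}(M)$ a complementary ideal.

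The hard part will be confirming that $\hhamsec(M)$ is genuinely a Poisson subalgebra—that is, closed under both operations simultaneously and compatibly—rather than just a Lie subalgebra. For the product $\sprod$, closure under the constraint is where the derivation property~\eqref{sdastder} is essential: for $\al,\be\in\hhamsec(M)$ one has $(\al\sprod\be)_{0}=\al_{0}\be_{0}$ and $(\al\sprod\be)_{1}=\al_{0}\be_{1}+\al_{1}\be_{0}=-\al_{0}\sd^{\ast}\be_{0}-\be_{0}\sd^{\ast}\al_{0}=-\sd^{\ast}(\al_{0}\be_{0})$ by the Leibniz rule, which is exactly the required constraint. The remaining subtlety is bookkeeping with the degree conventions and signs in the two brackets to ensure the low-degree terms combine correctly; I expect no genuine obstruction beyond careful tracking of the degree shifts $-1$ and $-2$ and the grading-sign conventions established for $\sd^{\ast}$ and $\laa\dum,\dum\raa$.
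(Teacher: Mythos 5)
Your overall architecture coincides with the paper's: verify closure of $\hhamsec(M)$ under $[\dum,\dum]$ by computing the degree-$0$ and degree-$1$ components, get closure under $\sprod$ from the derivation property of $\sd^{\ast}$, obtain the filtration estimate by degree counting, and check that $\iota$ is a Lie algebra homomorphism splitting $\pi$. However, there is a concrete error in the low-degree bookkeeping, and it sits exactly where the real content of the lemma lies. You assert that cross terms such as $\shl f, \sd^{\ast}g\shr$ ``vanish since \dots\ the Schouten bracket of functions is trivial''; but $\sd^{\ast}g=-dg$ is a one-form, not a function, and by~\eqref{schouten} one has $\shl f,\al\shr = -k\imt(\hm_{f})\al = (\sd^{\ast}f,\al)$, which is nonzero in general --- this is the identity~\eqref{shalgf} (a special case of~\eqref{twobracketsidentity}) that drives the paper's whole proof. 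In particular $\shl f,\sd^{\ast}g\shr=(\sd^{\ast}f,\sd^{\ast}g)=\{f,g\}$, so the two Schouten cross terms in $[\iota(f),\iota(g)]$ contribute $-2\{f,g\}$, and only after adding the algebraic term $(\sd^{\ast}f,\sd^{\ast}g)=\{f,g\}$ does one get the correct degree-zero part $-\{f,g\}$. Your sketch is internally inconsistent here: you state the degree-$0$ term is $+\{f,g\}$ and then write the final answer as $-\{f,g\}+\sd^{\ast}\{f,g\}$; with the cross terms genuinely discarded you would obtain $+\{f,g\}+\sd^{\ast}\{f,g\}\neq\iota(\{f,g\})$ and $\iota$ would fail to be a homomorphism. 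The same slip appears in your description of $[\al,\be]_{0}$: the Schouten bracket of the two degree-$1$ parts lies in degree $1$, so in fact $[\al,\be]_{0}=(\al_{1},\be_{1})+\shl\al_{0},\be_{1}\shr+\shl\al_{1},\be_{0}\shr$, and it is precisely~\eqref{shalgf} that evaluates the last two terms and yields $[\al,\be]_{0}=-\{\al_{0},\be_{0}\}$ (the sign matters for $\pi$ to be a homomorphism onto $(\cinf(M),\{\dum,\dum\})$).

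The missing identity is also what you need for the filtration claim in the only nontrivial case $k=0$: for $\al\in\hhamsec(M)$ and $\be\in\hhamsec_{l}(M)$ you must show that all contributions of $\al_{0}$ and $\al_{1}$ to $[\al,\be]_{j}$ cancel in every degree $j$, not merely that the total degree is right. The paper does this via the cancellation $(\al_{1},\be_{j+1})+(\al_{j+1},\be_{1})+\shl\al_{0},\be_{j+1}\shr+\shl\al_{j+1},\be_{0}\shr=0$ for all $j\geq 0$ (its equation~\eqref{shalcancel}), which is nothing but~\eqref{shalgf} applied twice; your ``one then checks the result lands in $\hhamsec_{k+l-2}(M)$'' is exactly this step and it does not follow from degree counting alone. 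Once~\eqref{shalgf} is added to your toolkit, the rest of your proposal --- the $\sprod$-closure via the Leibniz rule, the identification of $\ker\pi$ with $\hhamsec_{2}(M)$, and the splitting of~\eqref{hamsplit} --- is correct and matches the paper.
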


\begin{proof}
Combining the $k = 0$ case of~\eqref{schouten} and the $k = 1$ case of~\eqref{algebraicbracket}, shows that
\begin{gather}\label{shalgf}
\shl f, \al\shr = -k\imt(\hm_{f})\al = (\sd^{\ast}f, \al),\qquad \text{for}\quad f \in \cinf(M) \quad\text{and}\quad \al \in \ssec^{k}(M).
\end{gather}
Alternatively,~\eqref{shalgf} is a special case of~\eqref{twobracketsidentity}. Let $\al, \be \in \hhamsec(M)$. By~\eqref{shalgf},
\begin{gather}
 (\al_{1}, \be_{j+1}) + (\al_{j+1}, \be_{1}) + \shl\al_{0}, \be_{j+1}\shr + \shl \al_{j+1}, \be_{0}\shr\nonumber\\
\qquad{}= - (\sd^{\ast}\al_{0}, \be_{j+1}) - (\al_{j+1}, \sd^{\ast}\be_{0}) + \shl\al_{0}, \be_{j+1}\shr + \shl \al_{j+1}, \be_{0}\shr = 0,\label{shalcancel}
\end{gather}
for all $j \geq 0$. By~\eqref{shalcancel},
\begin{align}
[\al, \be]_{0} & = (\al_{1}, \be_{1}) + \shl\al_{0}, \be_{1}\shr + \shl\al_{1}, \be_{0}\shr\nonumber \\
&= (\sd^{\ast}\al_{0}, \sd^{\ast}\be_{0}) - \shl\al_{0}, \sd^{\ast}\be_{0}\shr - \shl\sd^{\ast}\al_{0}, \be_{0}\shr
 = - (\sd^{\ast}\al_{0}, \sd^{\ast}\be_{0}) = -\{\al_{0}, \be_{0}\} ,\label{hamsec1}
\end{align}
and, by~\eqref{shalcancel} and~\eqref{sdschouten},
\begin{align}
[\al, \be]_{1} & = (\al_{1}, \be_{2}) + (\al_{2}, \be_{1}) + \shl\al_{0}, \be_{2}\shr + \shl\al_{1}, \be_{1}\shr + \shl\al_{2}, \be_{0}\shr \nonumber\\
& = \shl\sd^{\ast}\al_{0}, \sd^{\ast}\be_{0}\shr = \sd^{\ast}\{\al_{0}, \be_{0}\} = \sd^{\ast}(\sd^{\ast}\al_{0}, \sd^{\ast}\be_{0}) = -\sd^{\ast}[\al, \be]_{0} .\label{hamsec2}
\end{align}
By~\eqref{hamsec1} and~\eqref{hamsec2}, $(\hhamsec(M), [\dum, \dum])$ is a Lie subalgebra of $(\hssec(M), [\dum, \dum])$ and $\hhamsec_{2}(M)$ is a Lie ideal of $(\hhamsec(M), [\dum, \dum])$. That $(\hhamsec(M), [\dum, \dum], \sprod)$ is a Poisson subalgebra of $(\hssec(M), [\dum, \dum], \sprod)$ follows from
\begin{gather*}
\sd^{\ast}(\al_{0}\be_{0}) = -\al_{0}\be_{1} - \be_{0}\al_{1} = -(\al \sprod \be)_{1}.
\end{gather*}
Let $\al, \be \in \hhamsec(M)$ and let $j \geq 2$. Then,
\begin{gather}\label{hamsec3}
[\al, \be]_{j} = \sum_{i= 1}^{j+1}(\al_{i}, \be_{j+2-i}) + \sum_{i = 0}^{j+1}\shl \al_{i}, \be_{j+1-i}\shr
= \sum_{i= 2}^{j}(\al_{i}, \be_{j+2-i}) + \sum_{i = 1}^{j}\shl \al_{i}, \be_{j+1-i}\shr,
\end{gather}
where the second equality follows from~\eqref{shalcancel}. Suppose $k \geq 0$, $l \geq 2$, $\al \in \hhamsec_{k}(M)$, and $\be \in \hhamsec_{l}(M)$. If $j < k+l-2$ and $i \geq k$ then $j+ 1 - i < j + 2 - i \leq j + 2 -k < l$; hence every term in the sums in the last line of~\eqref{hamsec3} vanishes. This shows that $[\hhamsec_{k}(M), \hhamsec_{l}(M)] \subset \hhamsec_{k+l-2}(M)$. It also follows from~\eqref{hamsec3} that $\hinf(M)$ is a Lie subalgebra of $(\hhamsec(M), [\dum, \dum])$. By~\eqref{hamsec1},
\begin{gather*}
\pi([\al, \be]) =- [\al, \be]_{0} = (\sd^{\ast}\al_{0}, \sd^{\ast}\be_{0}) = \{\al_{0}, \be_{0}\} = \shl\pi(\al), \pi(\be)\shr,
\end{gather*}
so that $\pi$ is a Lie algebra homomorphism. For $f, g \in \cinf(M)$, by~\eqref{shalgf} and~\eqref{sdschouten},
\begin{align*}
[\iota(f), \iota(g)] & = [ -f + \sd^{\ast}f, -g + \sd^{\ast}g] = (\sd^{\ast}f, \sd^{\ast}g) - \shl f, \sd^{\ast}g\shr - \shl \sd^{\ast}f, g\shr + \shl \sd^{\ast}f, \sd^{\ast}g\shr\\
 &= -(\sd^{\ast}f, \sd^{\ast}g) + \sd^{\ast}\{f, g\} = -\{f, g\} + \sd^{\ast}\{f, g\} =\iota(\{f, g\}),
\end{align*}
showing that $\hinf(M) = \iota(\cinf(M))$ is a Lie subalgebra of $(\hhamsec(M), [\dum, \dum])$ that is isomorphic to $(\cinf(M), \{\dum, \dum\})$.
\end{proof}

\begin{Lemma}\label{derivationlemma}
Let $(M, \Om)$ be a symplectic manifold. For a symplectic vector field $X \!\in\! \symplecto(M, \Om)$,
\begin{enumerate}[$1)$]\itemsep=0pt
\item\label{xder1} the Lie derivative $\lie_{X}$ along $X$ is a derivation of the algebraic bracket $(\dum, \dum)$;
\item\label{xder2} the operator $I(X)\colon \hssec(M) \to \hssec(M)$ defined by $I(X)\al = k\imt(X)\al = -(X^{\sflat}, \al)$ for $\al \in \ssec^{k}(M)$ is a derivation of the Schouten bracket $\shl\dum, \dum\shr$.
\end{enumerate}
\end{Lemma}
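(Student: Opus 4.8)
The plan is to deduce both statements from the naturality of the respective brackets, together with the characterization of a symplectic vector field as one whose flow preserves $\Om$ (equivalently, $X^{\sflat}$ is closed). Since $X \in \symplecto(M, \Om)$ is compactly supported, it generates a one-parameter group $\phi_{t}$ of symplectomorphisms, and differentiating $\phi_{t}$-equivariance of a bracket at $t = 0$ produces the Leibniz rule for $\lie_{X}$.

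For claim~\ref{xder1}, the key observation is that the algebraic bracket $(\dum, \dum)$ is, by~\eqref{algebraicbracket}, a pointwise operation assembled from $\al \tensor \be$ by raising an index with $\Om^{ij}$, contracting, and symmetrizing. Each of these operations is natural with respect to pullback by a symplectomorphism, since such a map preserves $\Om$, hence $\Om^{ij}$, and commutes with contraction, tensor product, and symmetrization. Therefore $\phi_{t}^{\ast}(\al, \be) = (\phi_{t}^{\ast}\al, \phi_{t}^{\ast}\be)$, and differentiating at $t = 0$ yields $\lie_{X}(\al, \be) = (\lie_{X}\al, \be) + (\al, \lie_{X}\be)$. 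Equivalently, one argues directly from~\eqref{algebraicbracket}: $\lie_{X}$ is a derivation of $\tensor$ and commutes with contraction, and $\lie_{X}\Om = 0$ forces $\lie_{X}\Om^{ij} = 0$ (differentiate $\Om^{ip}\Om_{pj} = -\delta_{j}{}^{i}$), so $\lie_{X}$ passes through every operation defining $(\dum, \dum)$.

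For claim~\ref{xder2}, I would argue on the cotangent bundle, where the Schouten bracket has its most transparent form. By definition $\shl\dum, \dum\shr$ is the transport through $\Om$ of the canonical Poisson bracket $\{\dum, \dum\}_{\ctm}$ on functions on $\ctm$ polynomial in the fibers, a contravariant symmetric tensor $\ga^{i_{1}\dots i_{k}}$ corresponding to $\hat\ga(x, \xi) = \ga^{i_{1}\dots i_{k}}\xi_{i_{1}}\cdots\xi_{i_{k}}$. A short computation raising indices with $\Om$ shows that $I(X)$, transported to contravariant tensors, is (up to an irrelevant constant and sign) interior multiplication by the one-form $X^{\sflat}$, which on fiber polynomials is the action of the vertical vector field $V = X^{\sflat}_{j}\,\partial_{\xi_{j}}$ on $\ctm$, since $\partial_{\xi_{j}}\hat\ga = k\ga^{j i_{2}\dots i_{k}}\xi_{i_{2}}\cdots\xi_{i_{k}}$. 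Thus it suffices to show that $\lie_{V}$ is a derivation of $\{\dum, \dum\}_{\ctm}$, that is, that $V$ is symplectic for the canonical symplectic form $\omega_{\mathrm{can}}$ on $\ctm$.

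This is where the hypothesis $X \in \symplecto(M, \Om)$ enters, and it is the crux of the argument. Writing $\omega_{\mathrm{can}} = d\xi_{i} \wedge dx^{i}$ and using that $X^{\sflat}_{j}$ depends only on the base, one computes $\imt_{V}\omega_{\mathrm{can}} = X^{\sflat}_{i}\, dx^{i} = \varpi^{\ast}X^{\sflat}$, where $\varpi\colon \ctm \to M$ is the projection; hence, by Cartan's formula and $d\omega_{\mathrm{can}} = 0$, $\lie_{V}\omega_{\mathrm{can}} = d\,\varpi^{\ast}X^{\sflat} = \varpi^{\ast}\,dX^{\sflat}$. Since $X$ is symplectic, $X^{\sflat}$ is closed, so $\lie_{V}\omega_{\mathrm{can}} = 0$; therefore $V$ is symplectic, $\lie_{V}$ is a derivation of $\{\dum, \dum\}_{\ctm}$, and transporting back through $\Om$ shows $I(X)$ is a derivation of $\shl\dum, \dum\shr$. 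The same conclusion can be reached by a direct but longer computation from~\eqref{schouten}: commuting the contraction $I(X)$ past the covariant derivatives produces terms in $\nabla^{p}X_{q}$, and the symmetry $\nabla_{[i}X_{j]} = 0$ equivalent to closedness of $X^{\sflat}$ is exactly what makes the spurious terms cancel. The main obstacle is thus the bookkeeping in this alternative route; the conceptual route on $\ctm$ sidesteps it, isolating the role of $dX^{\sflat} = 0$.
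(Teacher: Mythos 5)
Your proof is correct, but it follows a genuinely different route from the paper's. The paper first shows that the two claims are \emph{equivalent}: using $\lie_{X}\be = \shl X^{\sflat}, \be\shr$ and $I(X)\be = -(X^{\sflat}, \be)$, the compatibility identity~\eqref{liecompatible} of Lemma~\ref{compatibilitylemma} converts the derivation defect of $\lie_{X}$ for $(\dum,\dum)$ into the derivation defect of $I(X)$ for $\shl\dum,\dum\shr$; it then kills the common expression by localizing to $X^{\sflat}$ exact and invoking~\eqref{shalgf} together with the Jacobi identity of the Schouten bracket. You instead prove each claim independently: claim~1 by naturality of the fiberwise bracket~\eqref{algebraicbracket} under symplectomorphisms (equivalently, $\lie_{X}\Om = 0$ forces $\lie_{X}\Om^{ij} = 0$ and $\lie_{X}$ commutes with the remaining algebraic operations), and claim~2 by identifying $I(X)$ with the action of the vertical field $V = X^{\sflat}_{j}\partial_{\xi_{j}}$ on fiber polynomials on $\ctm$ and computing $\lie_{V}\omega_{\mathrm{can}} = \varpi^{\ast}dX^{\sflat} = 0$. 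Both identifications check out --- in particular the factor $k$ in $I(X)\al = k\imt(X)\al$ matches the factor produced by $\partial_{\xi_{j}}$ on a degree-$k$ fiber polynomial, so there is no degree-dependent rescaling that could spoil the Lie-algebraic statement. What the paper's route buys is the explicit link to the compatibility of the two brackets, which is a recurring theme (the same mechanism reappears in the proof of Lemma~\ref{truncatedcompatiblelemma}); what your route buys is independence from Lemma~\ref{compatibilitylemma} and from any choice of connection, plus a transparent localization of where the hypothesis $dX^{\sflat} = 0$ enters. Your final remark about the direct computation from~\eqref{schouten} is plausible but unnecessary given the cotangent-bundle argument.
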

\begin{proof}
Let $\al = X^{\sflat}$, $\be \in \ssec^{k}(M)$, and $\ga \in \ssec^{l}(M)$. By~\eqref{liecompatible} and the def\/inition of $I(X)$,
\begin{align}
\lie_{X}(\be, \ga)- (\lie_{X}\be, \ga) - (\be, \lie_{X}\ga)& = \shl \al, (\be, \ga)\shr - (\shl \al, \be\shr, \ga) - (\be, \shl \al, \ga\shr)\nonumber\\
& = -(\al, \shl \be, \ga\shr) + \shl (\al, \be), \ga\shr + \shl \be, (\al, \ga)\shr\nonumber \\
& = I(X)\shl\be, \ga\shr - \shl I(X)\be, \ga\shr - \shl\be, I(X)\ga \shr.\label{xdereq}
\end{align}
By~\eqref{xdereq},~\eqref{xder1} and~\eqref{xder2} are equivalent, so it suf\/f\/ices to prove either. As the claims are local in nature, it can be assumed $\al$ is exact. Taking $\al = -\sd^{\ast}f$, for $f \in \cinf(M)$, in~\eqref{xdereq} yields, by~\eqref{shalgf},
\begin{gather*}
(\sd^{\ast}f, \shl \be, \ga\shr) - \shl (\sd^{\ast}f, \be), \ga\shr - \shl \be, (\sd^{\ast}f,\ga)\shr = \shl f, \shl \be, \ga \shr\shr - \shl \shl f, \be\shr, \ga \shr - \shl \be, \shl f, \ga\shr\shr = 0. \tag*{\qed}
\end{gather*}\renewcommand{\qed}{}
\end{proof}

Let $\pi_{k}\colon \hssec(M) \to \hssec(M)$ be the projection with image $\hssec_{k}(M)$ def\/ined by $\pi_{k}\al = \al - \sum_{i = 0}^{k-1}\al_{i}$. Def\/ine \textit{truncated} brackets on $\hssec(M)$ by $(\al, \be)_{(k)} = (\pi_{k}\al, \pi_{k}\be)$ and $\shl\al, \be\shr_{(k)} = \shl\pi_{k}\al, \pi_{k}\be\shr$. If $k \geq 2$ then $2k - 2 \geq k$ so $(\al, \be)_{(k)} \in \hssec_{k}(M)$ and so $\pi_{k}(\al, \be)_{(k)} = (\al, \be)_{(k)}$; this identity suf\/f\/ices to show that $(\dum, \dum)_{(k)}$ satisf\/ies the Jacobi identity, so is a Lie bracket on $\hssec(M)$. The same argument shows that if $k \geq 1$ then $\shl\dum, \dum\shr_{(k)}$ is a Lie bracket on $\hssec(M)$. If $k \geq 1$ then $\hssec_{k+1}(M)$ is an ideal with respect to each of the brackets $\shl\dum, \dum\shr_{(k)}$ and $(\dum, \dum)_{(k+1)}$.
\begin{Lemma}
Let $(M, \Om)$ be a symplectic manifold. For $k \geq 1$, the Lie brackets $\shl\dum, \dum\shr_{(k)}$ and $(\dum, \dum)_{(k+1)}$ on $\hssec_{k+1}(M)$ are compatible.
\end{Lemma}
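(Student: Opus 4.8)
The plan is to reduce the claim to the compatibility of the untruncated brackets already furnished by Lemma~\ref{compatibilitylemma}. The first observation I would record is that the truncation operators act trivially on $\hssec_{k+1}(M)$: since $\pi_{m}\al = \al - \sum_{i=0}^{m-1}\al_{i}$ and an element $\al \in \hssec_{k+1}(M)$ has $\al_{i} = 0$ for all $i \le k$, both $\pi_{k}\al = \al$ and $\pi_{k+1}\al = \al$ hold. Consequently the restrictions to $\hssec_{k+1}(M)$ of the truncated brackets $\shl\dum,\dum\shr_{(k)}$ and $(\dum,\dum)_{(k+1)}$ coincide with the restrictions of the genuine Schouten bracket $\shl\dum,\dum\shr$ and algebraic bracket $(\dum,\dum)$, respectively.

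Next I would verify, by a degree count, that $\hssec_{k+1}(M)$ is closed under both untruncated brackets. Because $(\dum,\dum)$ has degree $-2$, the bracket $(\al,\be)$ of elements of degree at least $k+1$ has degree at least $2(k+1)-2 = 2k$, and $2k \ge k+1$ precisely when $k \ge 1$; this is exactly where the hypothesis enters. Because $\shl\dum,\dum\shr$ has degree $-1$, the bracket $\shl\al,\be\shr$ has degree at least $2k+1 \ge k+1$. Hence $\hssec_{k+1}(M)$ is a subalgebra (indeed an ideal) with respect to each of the two brackets, and therefore also with respect to every linear combination of them, so that the combinations below do land back in $\hssec_{k+1}(M)$.

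Finally I would invoke Lemma~\ref{compatibilitylemma}: the brackets $\shl\dum,\dum\shr$ and $(\dum,\dum)$ are compatible on $\hssec(M)$, so for all scalars $a,b$ the combination $a\shl\dum,\dum\shr + b(\dum,\dum)$ satisfies the Jacobi identity on $\hssec(M)$. Restricting a Lie bracket to a subspace closed under it again yields a Lie bracket, so each such combination restricts to a Lie bracket on $\hssec_{k+1}(M)$. By the first step this restriction equals $a\shl\dum,\dum\shr_{(k)} + b(\dum,\dum)_{(k+1)}$ on $\hssec_{k+1}(M)$, which is exactly the asserted compatibility.

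The argument is essentially bookkeeping, and I expect no serious obstacle: all the genuine content is the compatibility already established in Lemma~\ref{compatibilitylemma}. The only points needing care are that the two \emph{different} truncation levels (namely $k$ for the Schouten bracket and $k+1$ for the algebraic bracket) both become trivial once one restricts to $\hssec_{k+1}(M)$, and the degree count that forces $k \ge 1$ for closure under the degree $-2$ algebraic bracket.
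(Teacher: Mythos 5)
Your argument is correct and is in substance the same as the paper's: both rest on Lemma~\ref{compatibilitylemma} together with the observation that $\pi_{k}$ and $\pi_{k+1}$ act as the identity on $\hssec_{k+1}(M)$, the paper packaging this as the identity exhibiting $\shl\dum,\dum\shr_{(k)}$ as the coboundary of $\sd^{\ast}$ with respect to $(\dum,\dum)_{(k+1)}$ (via $\sd^{\ast}\pi_{k}=\pi_{k+1}\sd^{\ast}$ and $\pi_{k}\pi_{k+1}=\pi_{k+1}$), while you restrict the already established compatibility of the untruncated brackets to a subspace closed under both. The only blemish is the unused parenthetical ``(indeed an ideal)'': $\hssec_{k+1}(M)$ is not an ideal of $\hssec(M)$ for the \emph{untruncated} brackets (e.g., the $(\dum,\dum)$-bracket of an element of $\ssec^{k+1}(M)$ with a one-form has degree $k$), only for the truncated ones; since your proof uses only closure under brackets of two elements of $\hssec_{k+1}(M)$, this does not affect its validity.
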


\begin{proof} For $\al, \be \in \hssec_{k+1}(M)$, by the def\/initions of $\shl \dum, \dum \shr_{(k)}$ and $(\dum, \dum)_{(k+1)}$, and~\eqref{twobracketsidentity},
\begin{align*}
\shl\al, \be\shr_{(k)} & = \shl\pi_{k}\al, \pi_{k}\be \shr= (\sd^{\ast}\pi_{k}\al, \pi_{k}\be) + (\pi_{k}\al, \sd^{\ast}\pi_{k}\be) - \sd^{\ast}(\pi_{k}\al, \pi_{k}\be)\\
& = (\pi_{k+1}\sd^{\ast}\al, \pi_{k+1}\be) + (\pi_{k+1}\al, \pi_{k+1}\sd^{\ast}\be) - \sd^{\ast}(\pi_{k+1}\al, \pi_{k+1}\be)\\
& = (\sd^{\ast}\al, \be)_{(k+1)} + (\al, \sd^{\ast}\be)_{(k+1)} - \sd^{\ast}(\al, \be)_{(k+1)},
\end{align*}
where the penultimate equality follows from $\sd^{\ast}\pi_{k} = \pi_{k+1}\sd^{\ast}$ and $\pi_{k}\pi_{k+1} = \pi_{k+1}$ (since $\al \in \hssec_{k+1}(M)$, the latter implies $\pi_{k}\al = \pi_{k+1}\al$). This shows that $\shl\dum, \dum\shr_{(k)}$ is a cocycle with respect to $(\dum, \dum)_{(k+1)}$.
\end{proof}

That an arbitrary vector f\/ield need not preserve $\Om$ suggests working with the subspace $\hzsec(M)$ of $\hssec(M)$, the degree one part of which comprises closed one-forms on $M$ (the symplectic dual $\symplecto(M, \Om)^{\sflat}$ of $\symplecto(M, \Om)$). (A related problem is that the $(\dum, \dum)$-bracket of a closed one-form with an element of $\ssec^{2}(M)$ need not be a closed one-form.) Def\/ine
\begin{gather}\label{hbsecdefined}
\hbsec(M) = \big\{\al \in \hssec(M)\colon \al_{1} \,\,\text{is exact}\big\} \subset \hzsec(M) = \big\{\al \in \hssec(M)\colon d\al_{1} = 0\big\}.
\end{gather}
If $\al, \be \in \hzsec(M)$ then $\shl \al, \be \shr_{(1)\,1} = \shl \pi_{1}\al, \pi_{1}\be\shr_{1} = \shl \al_{1}, \be_{1}\shr$ is closed, since the Schouten bracket of closed one-forms is closed. Hence $\hzsec(M)$ is a subalgebra of $\hssec(M)$ with respect to the bracket $\shl \dum, \dum\shr_{(1)}$. If $\al, \be \in \hzsec(M)$ then $(\al, \be)_{(2)} \in \hssec_{2}(M) = \hzsec(M)\cap \hssec_{2}(M)$, so $\hzsec(M)$ is a subalgebra of~$\hssec(M)$ with respect to the bracket $(\dum, \dum)_{(2)}$.

\begin{Lemma}\label{truncatedcompatiblelemma} Let $(M, \Om)$ be a symplectic manifold. The Lie brackets $\shl\dum, \dum\shr_{(1)}$ and $(\dum, \dum)_{(2)}$ on $\hzsec(M)$ are compatible and $\hbsec(M)$ is an ideal in $\hzsec(M)$ with respect to each of $\shl\dum, \dum\shr_{(1)}$ and $(\dum, \dum)_{(2)}$.
\end{Lemma}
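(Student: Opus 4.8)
The plan is to verify compatibility by showing that the mixed part of the Jacobiator of $s\shl\dum,\dum\shr_{(1)} + t(\dum,\dum)_{(2)}$ vanishes, and then to settle the ideal claim by a short degree count. Since each truncated bracket is already a Lie bracket, compatibility is equivalent to
\[
T(\al, \be, \ga) := \cyclearg{\al, \be, \ga}\left[\big(\shl\al, \be\shr_{(1)}, \ga\big)_{(2)} + \shl(\al, \be)_{(2)}, \ga\shr_{(1)}\right] = 0
\]
for all $\al, \be, \ga \in \hzsec(M)$. Writing $u = \pi_{1}\al$, $v = \pi_{1}\be$, $w = \pi_{1}\ga$, the degree-zero parts drop out of both truncated brackets; using $\pi_{2}\pi_{1} = \pi_{2}$ and the definitions, $T = \cyclearg{u, v, w}[\shl(\pi_{2}u, \pi_{2}v), w\shr + (\pi_{2}\shl u, v\shr, \pi_{2}w)]$.

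First I would subtract from $T$ the full compatibility identity \eqref{liecompatible} of Lemma~\ref{compatibilitylemma} evaluated on the degree-$\geq 2$ inputs $\pi_{2}u, \pi_{2}v, \pi_{2}w$, which equals $0$. Because the only degree-one contribution to the Schouten bracket $\shl u, v\shr$ is $\shl u_{1}, v_{1}\shr$, we have $\pi_{2}\shl u, v\shr = \shl u, v\shr - \shl u_{1}, v_{1}\shr$; after the subtraction every term built purely from degree-$\geq 2$ inputs cancels, and what remains is a sum of terms each linear in the closed one-form parts $u_{1}, v_{1}, w_{1}$:
\[
T = \cyclearg{u, v, w}\left[\shl(\pi_{2}u, \pi_{2}v), w_{1}\shr + (\shl u_{1}, \pi_{2}v\shr, \pi_{2}w) + (\shl \pi_{2}u, v_{1}\shr, \pi_{2}w)\right].
\]

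The heart of the argument is the vanishing of this residual sum, and this is exactly where the defining closedness condition of $\hzsec(M)$ is used. By cyclic symmetry it suffices to collect the three terms containing a fixed closed one-form $a = u_{1}$; writing $P = \pi_{2}v$ and $Q = \pi_{2}w$ these assemble into $U = \shl(P, Q), a\shr + (\shl a, P\shr, Q) + (\shl Q, a\shr, P)$. Since $a$ is closed, $a^{\ssharp} \in \symplecto(M, \Om)$ and $\shl a, \dum\shr = \lie_{a^{\ssharp}}$ is a derivation of the algebraic bracket $(\dum, \dum)$ by Lemma~\ref{derivationlemma}. Expanding $\shl(P, Q), a\shr = -\shl a, (P, Q)\shr = -(\shl a, P\shr, Q) - (P, \shl a, Q\shr)$ via that derivation property, the two copies of $(\shl a, P\shr, Q)$ cancel and the surviving terms $-(P, \shl a, Q\shr)$ and $(\shl Q, a\shr, P)$ are equal and opposite by the antisymmetry of both brackets, so $U = 0$. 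The same computation with $a = v_{1}$ and $a = w_{1}$ disposes of the other two groups, giving $T = 0$. The main obstacle is organizational: one must funnel all the truncation corrections into terms that are \emph{linear} in the one-form parts, since only for a closed one-form is the derivation identity of Lemma~\ref{derivationlemma} available.

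For the ideal claims I would argue directly. With respect to $(\dum, \dum)_{(2)}$ there is nothing to check: for all $\al, \be \in \hzsec(M)$ one has $(\al, \be)_{(2)} = (\pi_{2}\al, \pi_{2}\be) \in \hssec_{2}(M) \subset \hbsec(M)$, so the bracket already lands inside $\hbsec(M)$. With respect to $\shl\dum, \dum\shr_{(1)}$ the only degree-one term of $\shl\al, \be\shr_{(1)}$ is $\shl\al_{1}, \be_{1}\shr = \lie_{\al_{1}^{\ssharp}}\be_{1} = d(\imt(\al_{1}^{\ssharp})\be_{1})$, which is exact because $\be_{1}$ is closed; hence $\shl\al, \be\shr_{(1)} \in \hbsec(M)$ for all $\al, \be \in \hzsec(M)$, and in particular $\hbsec(M)$ is an ideal with respect to $\shl\dum, \dum\shr_{(1)}$ as well.
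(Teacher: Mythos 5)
Your proposal is correct and follows essentially the same route as the paper: reduce the mixed Jacobiator, via the untruncated compatibility identity \eqref{liecompatible} applied to $\pi_{2}\al$, $\pi_{2}\be$, $\pi_{2}\ga$, to a residual cyclic sum linear in the closed one-form parts, and then kill that residue using the derivation property of Lemma~\ref{derivationlemma}. The only (harmless) deviation is in the ideal claim for $\shl\dum, \dum\shr_{(1)}$, where you deduce exactness of $\shl\al_{1}, \be_{1}\shr$ from closedness of $\be_{1}$ and Cartan's formula, while the paper uses exactness of $\al_{1}$; both yield the same conclusion.
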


\begin{proof}\allowdisplaybreaks
Let $\al, \be, \ga \in \hzsec(M)$. Then, using $\pi_{1}\al = \al_{1} + \pi_{2}\al$,
\begin{gather*}
\cyclearg{\al, \be, \ga}\big( (\shl \al, \be \shr_{(1)}, \ga)_{(2)} + \shl (\al, \be)_{(2)}, \ga\shr_{(1)}\big) \\
\qquad{} = \cyclearg{\al, \be, \ga}\big( (\pi_{2}\shl \pi_{1}\al, \pi_{1}\be\shr, \pi_{2}\ga) + \shl \pi_{1}(\pi_{2}\al, \pi_{2}\be) , \pi_{1}\ga\shr\big)\\
\qquad{} = \cyclearg{\al, \be, \ga}\left( (\pi_{2}\shl \al_{1} + \pi_{2}\al, \be_{1} + \pi_{2}\be\shr, \pi_{2}\ga) + \shl \pi_{2}(\pi_{2}\al, \pi_{2}\be) , \ga_{1} + \pi_{2}\ga\shr\right)\\
\qquad{} = \cyclearg{\al, \be, \ga}\big( (\shl \pi_{2}\al, \pi_{2}\be\shr, \pi_{2}\ga) + \shl (\pi_{2}\al, \pi_{2}\be) , \pi_{2}\ga\shr
+ (\pi_{2}\shl \al_{1}, \pi_{2}\be\shr, \pi_{2}\ga)  \\
\qquad\quad{}
+   (\pi_{2}\shl \pi_{2}\al, \be_{1} \shr, \pi_{2}\ga) + (\pi_{2}\shl \al_{1}, \be_{1}\shr, \pi_{2}\ga) + \shl \pi_{2}(\pi_{2}\al, \pi_{2}\be) , \ga_{1} \shr\big)\\
\qquad{} = \cyclearg{\al, \be, \ga}\left( \shl \al_{1}, \pi_{2}\be\shr, \pi_{2}\ga) +(\shl \pi_{2}\al, \be_{1} \shr, \pi_{2}\ga) + \shl (\pi_{2}\al, \pi_{2}\be) , \ga_{1} \shr\right)\\
\qquad{} = \cyclearg{\al, \be, \ga}\big((\lie_{\al_{1}^{\ssharp}}\pi_{2}\be, \pi_{2}\ga)
+ (\lie_{\al_{1}^{\ssharp}} \pi_{2}\be, \pi_{2}\ga) - \lie_{\al_{1}^{\ssharp}} (\pi_{2}\be, \pi_{2}\ga) \big) = 0,
\end{gather*}
the last equality because, by Lemma~\ref{derivationlemma}, the Lie derivative along a~symplectic vector f\/ield is a~derivation of the algebraic bracket $(\dum, \dum)$. This shows $\shl\dum, \dum\shr_{(1)}$ and $(\dum, \dum)_{(2)}$ are compatible. If $\al \in \hbsec(M)$ and $\be \in \hzsec(M)$, then $(\al, \be)_{(2)} \in \hssec_{2}(M) \subset \hbsec(M)$, and there is $f \in \cinf(M)$ such that $\al_{1} = -df$, so that
\begin{gather*}
\shl\al, \be\shr_{(1)\,1} = \shl\pi_{1}\al, \pi_{1}\be\shr_{1} = \shl\al_{1}, \be_{1} \shr = \shl \be_{1}, df\shr = \lie_{\be_{1}^{\ssharp}}df = d\lie_{\be_{1}^{\ssharp}}f.
\end{gather*}
Hence $\shl\al, \be\shr \in \hbsec(M)$.
\end{proof}

By Lemma~\ref{truncatedcompatiblelemma} the bracket
\begin{gather}\label{truncatedbracket}
[\dum, \dum]_{\trun} = (\dum, \dum)_{(2)} + \shl\dum, \dum\shr_{(1)}
\end{gather}
(the subscript $\trun$ is meant to suggest \textit{truncated}) is a Lie bracket on $\hzsec(M)$ and $\hbsec(M)$ is an ideal of $(\hzsec(M), [\dum, \dum]_{\trun})$. Because $[\al, \be]_{\trun} \in \hssec_{1}(M)$ for all $\al, \be \in \hzsec(M)$, $\hzsec_{1}(M)= \hzsec(M) \cap \hssec_{1}(M)$ and $\hbsec_{1}(M)= \hbsec(M) \cap \hssec_{1}(M)$ are ideals in $(\hzsec(M), [\dum, \dum]_{\trun})$.

Lemma~\ref{hamseccentrallemma} shows that the Lie algebra $(\hhamsec(M){,} [\dum, \dum])$ is a central extension of $(\hbsec_{1}(M){,} [\dum, \dum]_{\trun})$.

\begin{Lemma}\label{hamseccentrallemma}
Let $(M, \Om)$ be a symplectic manifold. The sequence
\begin{gather}\label{hamsplit2}
\{0\} \longrightarrow \rea \stackrel{\iota}{\longrightarrow} \big(\hhamsec(M), [\dum, \dum]\big) \stackrel{\pi_{1}}{\longrightarrow} \big(\hbsec_{1}(M), [\dum, \dum]_{\trun}\big) \longrightarrow \{0\}
\end{gather}
of Lie algebras is exact, where $\iota(c)$ is the constant function on $M$ equal to $-c$.
\end{Lemma}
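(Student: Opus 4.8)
The plan is to verify that each of the two arrows respects the stated structure, and then to identify the kernel and image of $\pi_1$. First I would check that $\iota$ lands in $\hhamsec(M)$: the constant function $-c$ has zeroth component $-c$ and first component $\sd^{\ast}c = -dc = 0 = -\sd^{\ast}(-c)$, so $\iota(c) \in \hhamsec(M)$, and $\iota$ is manifestly injective. Next I would show that $\iota(\rea)$ is central in $(\hhamsec(M), [\dum,\dum])$. Since a constant is annihilated by $\sd^{\ast}$, identity~\eqref{shalgf} gives $\shl\iota(c),\be\shr = 0$, while $\ssec^{0}(M)$ is central for the algebraic bracket (the prefactor in~\eqref{algebraicbracket} vanishes on degree-$0$ elements), so $(\iota(c),\be) = 0$; hence $[\iota(c),\be]=0$ for every $\be \in \hhamsec(M)$. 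In particular $\iota$ is a Lie homomorphism from the abelian Lie algebra $\rea$.

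For $\pi_1$, I would first note that for $\al \in \hhamsec(M)$ one has $(\pi_1\al)_{0} = 0$ and $(\pi_1\al)_{1} = \al_{1} = -\sd^{\ast}\al_{0} = d\al_{0}$, which is exact, so $\pi_1$ indeed maps $\hhamsec(M)$ into $\hbsec_1(M)$. The substantive step is that $\pi_1$ intertwines $[\dum,\dum]$ with $[\dum,\dum]_{\trun}$, and I would establish this by comparing the two brackets degree by degree. Unwinding the definition~\eqref{truncatedbracket} together with $(\dum,\dum)_{(2)} = (\pi_2\dum, \pi_2\dum)$ and $\shl\dum,\dum\shr_{(1)} = \shl\pi_1\dum,\pi_1\dum\shr$ gives $[\pi_1\al, \pi_1\be]_{\trun} = (\pi_2\al, \pi_2\be) + \shl\pi_1\al, \pi_1\be\shr$, whose degree-$j$ component for $j \geq 2$ equals $\sum_{i=2}^{j}(\al_i, \be_{j+2-i}) + \sum_{i=1}^{j}\shl\al_i, \be_{j+1-i}\shr$; this is exactly $[\al,\be]_j$ by~\eqref{hamsec3}. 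In degree $1$ the algebraic part contributes nothing (its lowest degree is $2$) while the Schouten part contributes $\shl\al_1, \be_1\shr$, which agrees with $[\al,\be]_1$ by~\eqref{hamsec2} (itself a consequence of the cancellation~\eqref{shalcancel}). Because the truncations to levels $(2)$ and $(1)$ remove precisely the degree-$0$ terms, $[\pi_1\al,\pi_1\be]_{\trun}$ has vanishing degree-$0$ part, and since $\pi_1$ removes exactly the degree-$0$ component of $[\al,\be]$, I would conclude $\pi_1[\al,\be] = [\pi_1\al, \pi_1\be]_{\trun}$.

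Finally I would establish exactness of~\eqref{hamsplit2}. For surjectivity of $\pi_1$, given $\ga \in \hbsec_1(M)$ with $\ga_1 = dh$, the element $\al = h + \ga$ satisfies $\al_1 = \ga_1 = dh = -\sd^{\ast}h = -\sd^{\ast}\al_0$, hence lies in $\hhamsec(M)$, and $\pi_1\al = \ga$. For exactness in the middle, an element $\al \in \hhamsec(M)$ with $\pi_1\al = 0$ has $\al_i = 0$ for all $i \geq 1$, so $\al = \al_0$ with $d\al_0 = \al_1 = 0$; as $M$ is connected this forces $\al_0$ to be constant, whence $\ker\pi_1 = \iota(\rea)$. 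Together with the injectivity of $\iota$, this gives exactness of~\eqref{hamsplit2}. The main obstacle is the intertwining property of $\pi_1$ in the second paragraph; once the degree-by-degree bookkeeping is arranged, however, it reduces to identities already recorded in the proof of Lemma~\ref{hamseclemma}, the role of the truncations being exactly to discard the degree-$0$ contribution $(\al_1,\be_1)$ that obstructs $\pi_1$ from being a homomorphism for the untruncated brackets.
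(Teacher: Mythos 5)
Your proposal is correct and follows essentially the same route as the paper: identify $\ker\pi_{1}$ with the constants, note surjectivity, and reduce the substantive point to showing $\pi_{1}$ intertwines $[\dum,\dum]$ with $[\dum,\dum]_{\trun}$ by comparing the brackets degree by degree via~\eqref{hamsec3} in degrees $\geq 2$ and~\eqref{hamsec2} in degree~$1$. The only additions beyond the paper's proof are the explicit verification that $\iota(\rea)$ is central and the explicit construction of a preimage for surjectivity, both of which are consistent with (and implicit in) the paper's argument.
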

\begin{proof}
For $\al \in \hhamsec(M)$, $\pi_{1}\al = 0$ if and only if $\al_{i} = 0$ for $i \geq 2$ and $\al_{0}$ is constant. As~$\pi_{1}$ maps $\hhamsec(M)$ surjectively onto $\hbsec_{1}(M)$, the content of the claim is that $\pi_{1}$ is a Lie algebra homomorphism. Let $\al, \be \in \hhamsec(M)$. The identity~\eqref{hamsec3} can be restated as
\begin{gather*}
\pi_{2}[\al, \be] = (\pi_{2}\al, \pi_{2}\be) + \pi_{2}\shl\pi_{1}\al, \pi_{1}\be\shr = \pi_{2} [\al, \be]_{\trun}.
\end{gather*}
The identity~\eqref{hamsec2} shows $[\al, \be]_{1} = \sd^{\ast}\{\al_{0}, \be_{0}\}$. On the other hand,
\begin{align*}
[\al, \be]_{\trun\,1} & = (\pi_{2}\al, \pi_{2}\be)_{1} + \shl \pi_{1}\al, \pi_{1}\be\shr_{1} = \shl \al_{1}, \be_{1}\shr\\
 & = \shl \sd^{\ast}\al_{0}, \sd^{\ast}\be_{0}\shr= \sd^{\ast}\{\al_{0}, \be_{0}\} = [\al, \be]_{1} .
\end{align*}
Hence $\pi_{1}[\al, \be] = \pi_{1} [\al, \be]_{\trun} = [\al, \be]_{\trun} = [\pi_{1}\al, \pi_{1}\be]_{\trun}$.
\end{proof}

The subspace $\hssec_{2}(M)$ is a Lie ideal of each of $(\hhamsec(M), [\dum, \dum])$ and $(\hbsec(M), [\dum, \dum]_{\trun})$, and the restrictions to $\hssec_{2}(M)$ of the brackets $[\dum, \dum]$ and $[\dum, \dum]_{\trun}$ coincide.

\section[Functionals on the symplectic af\/f\/ine space]{Functionals on the symplectic af\/f\/ine space $\boldsymbol{(\symcon(M, \Om), \sOm)}$\\ of symplectic connections}\label{functionalsection}

This section reviews the def\/initions and characterizations of the preferred and critical symplectic connections.

\subsection{}
 The Lie derivative of the af\/f\/ine connection $\nabla$ along $X$,
\begin{gather}\label{lienabla}
(\lie_{X}\nabla)_{ij}\,^{k} = \nabla_{i}\nabla_{j}X^{k} + X^{p}R_{pij}\,^{k}
\end{gather}
 is def\/ined to be the derivative at $t = 0$ of the pullback $\phi_{t}^{\ast}\nabla$ along the f\/low $\phi_{t}$ of $X$. Given $\nabla \in \symcon(M, \Om)$, def\/ine $\lop = \lop_{\nabla}\colon \Ga(\ctm) \to T_{\nabla}\symcon(M, \Om)$ by $\lop(X^{\sflat})_{ijk} = (\lie_{X}\nabla)_{(ijk)}$ for $X \in \Ga(TM)$. Since $2(\lie_{X}\nabla)_{i[jk]} = \nabla_{i}dX^{\sflat}_{jk}$,
\begin{gather*}
(\lie_{X}\nabla)_{ijk} = (\lie_{X}\nabla)_{(ijk)} + \tfrac{2}{3}\nabla_{(i}dX^{\sflat}_{j)k} = \lop(X^{\sflat})_{ijk} + \tfrac{2}{3}\nabla_{(i}dX^{\sflat}_{j)k}.
\end{gather*}
If \looseness=-1 $X \in \symplecto(M, \Om)$, then $(\lie_{X}\nabla)_{ijk}$ is completely symmetric, so, in this case, $\lop(X^{\sflat})_{ijk} = (\lie_{X}\nabla)_{ijk}$.
Def\/ine a linear operator $\sRo\colon \ssec^{3}(M)\to \Ga(\ctm)$ by $\sRo(\be)_{i} = \be^{abc}R_{iabc}$. The formal adjoint $\sRo^{\ast}\colon \Ga(\ctm) \to \ssec^{3}(M)$ is $\sRo^{\ast}(\al)_{ijk} = -\al^{p}R_{p(ijk)}$. For $\al \in \Ga(\ctm)$, $\lop(\al)$ can be rewritten as
\begin{gather*}
\lop(\al)_{ijk} = \nabla_{(i}\nabla_{j}\al_{k)} + \al^{p}R_{p(ijk)} = \big(\sd^{\ast\, 2}\al\big)_{ijk} -\sRo^{\ast}(\al)_{ijk}.
\end{gather*}
The induced action of $\ham(M, \Om)$ on $\symcon(M, \Om)$ is given by $\hop\colon \cinf(M) \to T_{\nabla}\symcon(M, \Om)$ def\/ined by
\begin{gather*}
\hop(f) = \lie_{\hm_{f}}\nabla = \lop(-df) = \lop(\sd^{\ast}f) = \big(\sd^{\ast\,3}f\big) - \sRo^{\ast}(\sd^{\ast} f).
\end{gather*}
Taking formal adjoints shows $\lop^{\ast} = -\sd^{2} - \sRo$ and $\hop^{\ast} = (\lop \sd^{\ast})^{\ast} = \sd \lop^{\ast} = -\sd^{3} - \sd\sRo$. Tracing~\eqref{lienabla} and using the Ricci identity twice yields that, for $\nabla \in \symcon(M, \Om)$,
\begin{align}
\sd \lop(X^{\sflat})_{ij} &=\nabla_{p}(\lie_{X}\nabla)_{ij}\,^{p} = \nabla_{p}\nabla_{i}\nabla_{j}X^{p} + X^{q}\nabla_{p}R_{qij}\,^{p} + R_{qij}\,^{p}\nabla_{p}X^{q}\nonumber\\
& = \nabla_{i}\nabla_{p}\nabla_{j}X^{p} + R_{iq}\nabla_{j}X^{q} + 2X^{q}\nabla_{[q}R_{i]j}\nonumber\\
& = \nabla_{i}\nabla_{j}\nabla_{p}X^{p} + R_{jq}\nabla_{i}X^{q} + R_{iq}\nabla_{j}X^{q}+ X^{q}\nabla_{q}R_{ij} + 2X^{q}\nabla_{i}R_{[jq]}\nonumber\\
&= (\lie_{X}\ric)_{ij} .\label{tracedhf}
\end{align}
Taking $X = \hm_{f}$ in~\eqref{tracedhf} yields $\sd \hop(f) = \lie_{\hm_{f}}\ric$.

\subsection{}\label{momentmapsection}
The \textit{Cahen--Gutt moment map} $\K\colon \symcon(M, \Om)\to \cinf(M)$ is def\/ined by
\begin{gather}\label{kdefined}
\K(\nabla) = -\sd^{2}\ric - \tfrac{1}{2}R^{ij}R_{ij}+ \tfrac{1}{4}R^{ijkl}R_{ijkl} = \nabla^{i}\nabla^{j}R_{ij} - \tfrac{1}{2}R^{ij}R_{ij}+ \tfrac{1}{4}R^{ijkl}R_{ijkl} .
\end{gather}
The map $\K$ was def\/ined by M. Cahen and S. Gutt in \cite{Cahen-Gutt} (see also~\cite{Bieliavsky-Cahen-Gutt-Rawnsley-Schwachhofer} or~\cite{Gutt-remarks}), where it was shown that $\K$ is a moment map for the action of $\Ham(M, \Om)$ on $\symcon(M, \Om)$. The map $\K(\nabla)$ has the form required by Tamarkin's theorem; its part quadratic in the curvature is a constant multiple of the contraction of the f\/irst Pontryagin form $\pon_{1}(\nabla)$ of $\nabla$ with $\Om_{2}$: $\K(\nabla) = -\sd^{2}\ric - (\pi^{2}/2)\pon_{1}(\nabla)_{p}\,^{p}\,_{q}\,^{q}$. As a consequence, when $M$ is compact the integral of $\K(\nabla)$ depends only on the cohomology class $[\Om_{n-2}]$ of $\Om_{n-2}$ and the f\/irst Pontryagin class $[\pon_{1}]$ of $M$,
\begin{gather}\label{intk}
\int_{M}\K(\nabla)\,\Om_{n} = -4\pi^{2}\int_{M} \pon_{1}\wedge \Om_{n-2} = -4\pi^{2}\lb [\pon_{1}] \cup [\Om_{n-2}], [M]\ra.
\end{gather}
(See \cite{Fox-critical} for details.) Since integration against $f\Om_{n}$ def\/ines a linear functional on $\ham(M, \Om)$ for any $f \in \cinf(M)$, whatever is the precise meaning of the dual space $\ham(M, \Om)^{\ast}$, this space contains $\ham(M, \Om)$ as a subspace. Hence $\K$ can be regarded as a map $\K\colon \symcon(M, \Om)\to \ham(M, \Om)^{\ast}$.

The f\/irst variation $\vr_{\Pi}\F(\nabla)$ of a functional $\F$ on $\symcon(M, \Om)$ at $\nabla \in \symcon(M, \Om)$ in the direction of $\Pi \in T_{\nabla} \symcon(M, \Om)$ is def\/ined by $\vr_{\Pi} \F(\nabla) = \tfrac{d}{dt}_{|t = 0}\F(\nabla + t\Pi)$. The essential content of Theorem~\ref{momentmaptheorem} is that $\vr_{\Pi} \K_{\nabla} = \H^{\ast}_{\nabla}(\Pi)$.

\begin{Theorem}[M.~Cahen and S.~Gutt, \cite{Cahen-Gutt}]\label{momentmaptheorem}
Let $(M, \Om)$ be a symplectic manifold. The map $\K\colon \symcon(M, \Om)\to \ham(M, \Om)^{\ast}$ is a moment map for the action of $\Ham(M, \Om)$ on $\symcon(M, \Om)$, equivariant with respect to the actions of $\Symplecto(M, \Om)$.
\end{Theorem}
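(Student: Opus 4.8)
The plan is to reduce the claim that $\K$ is a moment map to the single first-variation identity $\vr_\Pi\K = \hop^*\Pi$ noted before the statement, and then to obtain equivariance from the manifest naturality of $\K$ under symplectomorphisms. First I set up the moment map condition in the formal infinite-dimensional sense. For $f \in \cinf(M)$ the fundamental vector field of the $\Ham(M,\Om)$-action at $\nabla$ is $\hop(f) = \lie_{\hm_f}\nabla = \lop(\sd^*f) \in T_\nabla\symcon(M,\Om)$, and $\K$ is regarded as a map to $\ham(M,\Om)^*$ through $f \mapsto \laa\K(\nabla), f\raa = \int_M \K(\nabla)f\,\Om_n$. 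Thus the action is weakly Hamiltonian with nonequivariant moment map $\K$ precisely when, for every $f$ and every $\Pi \in T_\nabla\symcon(M,\Om)$, the function $\nabla \mapsto \laa\K(\nabla), f\raa$ has Hamiltonian vector field $\hop(f)$, that is $\vr_\Pi\laa\K(\nabla),f\raa = \sOm_\nabla(\hop(f),\Pi)$. The left-hand side is $\laa\vr_\Pi\K, f\raa$, while the right-hand side equals $\laa\hop(f),\Pi\raa = \laa f, \hop^*\Pi\raa = \laa\hop^*\Pi, f\raa$, using $\sOm_\nabla(\al,\be) = \laa\al,\be\raa$, the adjoint relation for $\hop$ applied to the degree-zero argument $f$, and the symmetry of the $L^2$ pairing on $\cinf(M)$. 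By weak nondegeneracy of $\laa\dum,\dum\raa$ on functions, the weak moment map condition for all $f$ is therefore equivalent to the pointwise identity $\vr_\Pi\K = \hop^*\Pi = -\sd^3\Pi - \sd\sRo\,\Pi$.

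The heart of the matter is this identity, which I would prove by direct variation. With $\bnabla = \nabla + t\Pi$ and $\Pi \in \ssec^3(M)$ the curvature varies by $\vr_\Pi R_{ijk}\,^l = 2\nabla_{[i}\Pi_{j]k}\,^l$, hence $\vr_\Pi R_{ijkl} = 2\nabla_{[i}\Pi_{j]kl}$, and, since $\Pi$ is completely symmetric and trace-free, $\vr_\Pi\ric = \sd\Pi$. Substituting into $\K = -\sd^2\ric - \tfrac12 R^{ij}R_{ij} + \tfrac14 R^{ijkl}R_{ijkl}$, the term $\vr_\Pi(-\sd^2\ric)$ yields $\sd^2(\sd\Pi) = \sd^3\Pi$ together with the variation of the two divergences acting on the fixed $\ric$; commuting covariant derivatives by the Ricci identity turns the latter into complete curvature contractions. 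The variations $-R^{ij}\vr_\Pi R_{ij}$ and $\tfrac12 R^{ijkl}\vr_\Pi R_{ijkl}$ of the two quadratic invariants, after use of the symplectic curvature symmetries recorded in the Preliminaries (namely $R_{ij[kl]} = 0$, symmetry of $\ric$, the algebraic Bianchi identity and the induced trace relations), produce exactly the curvature contractions occurring in $\sd\sRo\,\Pi$, where $\sRo(\be)_i = \be^{abc}R_{iabc}$. The main obstacle is precisely this bookkeeping: showing that the Ricci-identity commutator terms arising from $\vr_\Pi(\sd^2\ric)$ cancel against the non-divergence part of the quadratic variations, leaving $-\sd^3\Pi - \sd\sRo\,\Pi$. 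As a guide to the quadratic terms, the contraction of the first Pontryagin form that makes up the curvature-quadratic part of $\K$ has a variation controlled by a Chern--Weil transgression.

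Finally, for equivariance I would argue that $\K(\nabla)$ is assembled from $\nabla$ and $\Om$ by natural operations (forming curvature, covariant differentiation, contraction with $\Om$), so it is natural under pullback: $\K(\phi^*\nabla) = \phi^*\K(\nabla)$ for every $\phi \in \Symplecto(M,\Om)$. This is the asserted $\Symplecto(M,\Om)$-equivariance, with infinitesimal form $\vr_{\lie_X\nabla}\K(\nabla) = \lie_X\K(\nabla)$ for $X \in \symplecto(M,\Om)$. Restricting to $X = \hm_g$ and pairing with $f$ then gives, since $\hm_g$ preserves $\Om_n$, the cocycle identity $\sOm_\nabla(\hop(f),\hop(g)) = \laa\vr_{\hop(g)}\K, f\raa = \laa\lie_{\hm_g}\K, f\raa = \laa\K, \{f,g\}\raa$, that is $\{\laa\K,f\raa, \laa\K,g\raa\} = \laa\K, \{f,g\}\raa$. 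This is the equivariance of the weak moment map for the connected group $\Ham(M,\Om)$, upgrading it to a moment map and completing the proof.
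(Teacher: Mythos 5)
Your proposal follows essentially the same route as the paper's own (indication of) proof: both reduce the moment-map condition to the first-variation identity $\vr_{\Pi}\K = \hop^{\ast}(\Pi)$, establish it by directly varying the curvature, the Ricci tensor, and the quadratic curvature invariants entering $\K$, and obtain $\Symplecto(M, \Om)$-equivariance from the naturality $\K(\phi^{\ast}\nabla) = \K(\nabla)\circ\phi$, whose infinitesimal form gives the cocycle identity. Like the paper, you leave the curvature bookkeeping at the level of an outline (the paper records the intermediate variation formulas and defers the full computation to \cite{Fox-critical}), so the proposal is correct and matches the paper's approach.
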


\begin{proof}[Indication of proof] For a complete proof see \cite{Fox-critical}. Let $\bar{R}_{ijk}\,^{l}$ be the curvature of $\bnabla= \nabla + t\Pi_{ij}\,^{k}$ and label with a $\bar{\,}$ the tensors derived from it; for example $\bar{R}_{ij}$ is the Ricci curvature of $\bnabla$. Let $B(\Pi)_{ij} = \Pi_{ip}\,^{q}\Pi_{jq}\,^{p}$ and $(\Pi^{\ast}\Pi)_{i} = 3\sd B(\Pi)_{i} - \Pi^{abc}\nabla_{i}\Pi_{abc}$. Then
\begin{gather}
\label{riemvar} \bar{R}_{ijkl} = R_{ijkl} + 2t\nabla_{[i}\Pi_{j]kl} + 2t^{2}\Pi_{pl[i}\Pi_{j]k}\,^{p} = R_{ijkl} + t\dn\Pi_{ijkl} + t^{2}(\Pi \wedge \Pi)_{ijkl},\\
\label{ricvar} \bar{R}_{ij} = R_{ij} + t\nabla_{p}\Pi_{ij}\,^{p} -t^{2}\Pi_{ip}\,^{q}\Pi_{jq}\,^{p} = R_{ij} + t\sd\Pi_{ij} - t^{2}B(\Pi)_{ij},\\
\label{sdastricvary}
\sd_{\bnabla}^{\ast}\bric = \sd^{\ast}_{\nabla}\ric + t\big(\sd^{\ast}\sd\Pi + \tfrac{1}{3}(\ric, \Pi)\big)
+ t^{2}\big({-}\sd^{\ast}B(\Pi) + \tfrac{1}{3}(\sd\Pi, \Pi)\big) - \tfrac{1}{3}t^{3}(B(\Pi), \Pi),\\
\sd_{\nabla + t\Pi}\ric(\nabla + t\Pi)_{i} = \sd_{\nabla}\ric_{i} - t( \lop^{\ast}(\Pi)_{i} + \T(\Pi)_{i})
 - t^{2}\left(\sd B(\Pi)_{i} + \Pi_{i}\,^{pq}\sd\Pi_{pq} \right) + O\big(t^{3}\big),\!\!\!\!\label{rfvary}
\end{gather}
where $\T(\be)_{i} = \be^{abc}(R_{i(abc)} - \Om_{i(a}R_{bc)}) = \be_{i}\,^{pq}R_{pq} + \sRo(\be)_{i}$. Combining~\eqref{riemvar}--\eqref{rfvary} yields \begin{gather}\label{kvary}
\K(\nabla + t\Pi) = \K(\nabla) + t\hop^{\ast}(\Pi) + \tfrac{1}{2}t^{2}\sd(\Pi^{\ast}\Pi) + O\big(t^{3}\big),
\end{gather}
For $\phi \in \Symplecto(M, \Om)$, $\K(\phi^{\ast}\nabla) = \K(\nabla)\circ \phi$, so $\laa f \circ \phi, \K(\phi^{\ast}\nabla)\raa = \laa f, \K(\nabla)\raa$. This shows $\K$ is equivariant with respect to the action of $\Symplecto(M, \Om)$ on $\symcon(M, \Om)$ and its action on $\ham(M, \Om)^{\ast}$ induced by its action on $\cinf(M)$. For $\nabla \in \symcon(M, \Om)$, $\Pi \in T_{\nabla}\symcon(M, \Om)$, $X \in \symplecto(M, \Om)$, and $f \in \cinf_{c}(M)$ it follows from~\eqref{kvary} that $\vr_{\Pi} \K_{\nabla} = \H_{\nabla}^{\ast}(\Pi)$. Hence $\sOmega_{\nabla}(\hop(f), \Pi) = \laa f, \hop^{\ast}(\Pi)\raa = \vr_{\Pi} \laa f, \K(\nabla)\raa$, showing that $\K$ is a moment map.
\end{proof}

\begin{Lemma}\label{precriticallemma}
For any $f \in C^{4}(\rea)$, the Hamiltonian vector field on $(\symcon(M, \Om), \sOm)$ generated by $\ex_{f}(\nabla) = \int_{M}f(\K(\nabla))\,\Om_{n}$ is $\shm_{\ex_{f}} = -\H(f^{\prime}(\K(\nabla)))$.
\end{Lemma}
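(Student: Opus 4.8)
The plan is to characterize $\shm_{\ex_f}$ through the defining relation of a Hamiltonian vector field on $(\symcon(M, \Om), \sOm)$ and then reduce the computation to the first variation of $\ex_f$, which is governed by the Cahen--Gutt moment map. Consistently with the convention $\hm_{g}^{i} = -dg^{i}$ fixed for functions, the Hamiltonian vector field $\shm_{F}$ of a functional $F$ on $(\symcon(M, \Om), \sOm)$ is the unique element of $T_{\nabla}\symcon(M, \Om)$ satisfying $\sOm_{\nabla}(\shm_{F}, \Pi) = -\vr_{\Pi} F(\nabla)$ for all $\Pi \in T_{\nabla}\symcon(M, \Om)$; by weak nondegeneracy of $\sOm$ such an element, if it exists, is unique. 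Thus it suffices to identify $\vr_{\Pi}\ex_{f}(\nabla)$ with $-\sOm_{\nabla}(\hop(f^{\prime}(\K(\nabla))), \Pi)$.

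First I would compute the first variation by the chain rule together with the identity $\vr_{\Pi}\K(\nabla) = \hop^{\ast}(\Pi)$ recorded in Theorem~\ref{momentmaptheorem} (equivalently, the first-order term of~\eqref{kvary}). Since $f \in C^{4}(\rea)$ and $\K(\nabla)$ is smooth, the composite $f^{\prime}(\K(\nabla))$ is of class $C^{3}$, which is exactly the regularity needed to apply $\hop$, an operator involving three covariant derivatives of its argument; this is why the hypothesis is $C^{4}$ rather than merely $C^{1}$. One then obtains
\begin{gather*}
\vr_{\Pi}\ex_{f}(\nabla) = \int_{M} f^{\prime}(\K(\nabla))\, \vr_{\Pi}\K(\nabla)\,\Om_{n} = \int_{M} f^{\prime}(\K(\nabla))\, \hop^{\ast}(\Pi)\,\Om_{n} = \laa f^{\prime}(\K(\nabla)), \hop^{\ast}(\Pi)\raa ,
\end{gather*}
the last pairing being the degree-zero pairing, which on $\cinf(M)$ is the $L^{2}$ inner product.

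Next I would transfer $\hop^{\ast}$ back onto $f^{\prime}(\K(\nabla))$ using the adjoint relation $\laa \hop g, \Pi\raa = (-1)^{|g||\hop|}\laa g, \hop^{\ast}\Pi\raa$. The crucial point is that $g = f^{\prime}(\K(\nabla))$ has degree $0$ while $|\hop| = 3$, so the sign $(-1)^{|g||\hop|}$ is trivial and no sign is introduced; hence $\laa f^{\prime}(\K(\nabla)), \hop^{\ast}(\Pi)\raa = \laa \hop(f^{\prime}(\K(\nabla))), \Pi\raa$. Because $\hop(f^{\prime}(\K(\nabla)))$ and $\Pi$ both lie in $T_{\nabla}\symcon(M, \Om) = \ssec^{3}(M)$, on which $\sOm_{\nabla}$ restricts to $\laa \dum, \dum\raa$, this equals $\sOm_{\nabla}(\hop(f^{\prime}(\K(\nabla))), \Pi)$. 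Combining this with the defining relation gives $\sOm_{\nabla}(\shm_{\ex_{f}}, \Pi) = -\sOm_{\nabla}(\hop(f^{\prime}(\K(\nabla))), \Pi)$ for all $\Pi$, and weak nondegeneracy of $\sOm$ then forces $\shm_{\ex_{f}} = -\hop(f^{\prime}(\K(\nabla))) = -\H(f^{\prime}(\K(\nabla)))$.

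The only genuine obstacle is the bookkeeping of signs, which is concentrated in two places: the convention fixing the Hamiltonian vector field (which must be taken to match $\hm_{g}^{i} = -dg^{i}$, and is responsible for the minus sign in the conclusion), and the verification that the adjoint relation contributes no sign because $f^{\prime}(\K(\nabla))$ is a $0$-tensor. Everything else is a direct application of the first-variation formula for $\K$ and the nondegeneracy of $\sOm$.
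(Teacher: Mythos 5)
Your proposal is correct and follows the paper's proof: both compute $\vr_{\Pi}\ex_{f}=\laa f^{\prime}(\K(\nabla)),\hop^{\ast}(\Pi)\raa$ from the first-order term of the variation of $\K$ and then identify this with $\sOm_{\nabla}(\hop(f^{\prime}(\K(\nabla))),\Pi)$ via the adjoint relation (sign-free since $f^{\prime}(\K(\nabla))$ has degree $0$). You merely make explicit the sign convention $\vr_{\Pi}F=\sOm_{\nabla}(\Pi,\shm_{F})$ and the regularity bookkeeping that the paper leaves implicit.
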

\begin{proof}
Calculating the f\/irst variation $\vr_{\Pi}\ex_{f}(\nabla)$ along $\Pi \in T_{\nabla}\symcon(M, \Om)$ using~\eqref{kvary} yields
\begin{gather}\label{varex1}
\vr_{\Pi}\ex_{f}  = \laa f^{\prime}(\K(\nabla)), \H^{\ast}(\Pi)\raa= \sOm_{\nabla}(\hop(f^{\prime}(\K(\nabla))), \Pi),
\end{gather}
from which the claim follows.
\end{proof}

A symplectic connection $\nabla \in \symcon(M, \Om)$ is \textit{critical} if it is a critical point, for arbitrary compactly supported variations $\al \in \ssec^{3}(M)$, of $\ex\colon \symcon(M, \Om)\to \cinf(M)$ def\/ined by $\ex(\nabla) = \int_{M} \K(\nabla)^{2}\,\Om_{n}$. Motivation for studying critical connections is given in \cite{Fox-critical}, where the author proposed this notion.

\begin{Corollary}[\cite{Fox-critical}]\label{criticallemma}
A symplectic connection $\nabla{\in}\symcon(M, \Om)$ is critical if and only if $\hop(\K(\nabla)){=}0$.
\end{Corollary}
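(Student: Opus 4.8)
The plan is to read the statement off directly from Lemma~\ref{precriticallemma} by specializing to $f(x) = x^{2}$. This function is smooth, hence of class $C^{4}(\rea)$, and for it one has $\ex_{f} = \ex$ and $f^{\prime}(x) = 2x$. First I would invoke the first variation formula~\eqref{varex1} established in the proof of that lemma, namely $\vr_{\Pi}\ex_{f} = \laa f^{\prime}(\K(\nabla)), \H^{\ast}(\Pi)\raa = \sOm_{\nabla}(\hop(f^{\prime}(\K(\nabla))), \Pi)$ for every $\Pi \in T_{\nabla}\symcon(M, \Om)$. Substituting $f^{\prime}(\K(\nabla)) = 2\K(\nabla)$ and using the linearity of $\hop$ then gives
\begin{gather*}
\vr_{\Pi}\ex(\nabla) = 2\,\sOm_{\nabla}(\hop(\K(\nabla)), \Pi) = 2\laa \hop(\K(\nabla)), \Pi\raa .
\end{gather*}

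Next I would unwind the definition of \emph{critical}: by construction $\nabla$ is critical precisely when $\vr_{\Pi}\ex(\nabla) = 0$ for every compactly supported $\Pi \in \ssec^{3}(M)$. By the displayed identity this is equivalent to the vanishing of $\laa \hop(\K(\nabla)), \Pi\raa$ against all such test tensors. Since $\hop(\K(\nabla))$ lies in $T_{\nabla}\symcon(M, \Om) = \ssec^{3}(M)$ and the pairing $\laa \dum, \dum\raa$ (equivalently the symplectic form $\sOm$) is weakly nondegenerate---recall that a symmetric tensor pairing to zero against every compactly supported test tensor must itself vanish---this forces $\hop(\K(\nabla)) = 0$. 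Conversely, if $\hop(\K(\nabla)) = 0$ then the displayed first variation vanishes identically, so $\nabla$ is critical.

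I do not expect any real obstacle here: the corollary is an immediate consequence of Lemma~\ref{precriticallemma} combined with the weak nondegeneracy of $\sOm$ recorded in Section~\ref{backgroundsection}. All of the genuine analytic work---the second-order Taylor expansion~\eqref{kvary} of the Cahen--Gutt moment map along affine variations---has already been carried out in the proof of Theorem~\ref{momentmaptheorem}. The only points demanding attention are the trivial verification that $f(x) = x^{2}$ meets the regularity hypothesis of the lemma, and the clean application of weak nondegeneracy to pass from ``pairs to zero against all variations'' to ``equals zero.''
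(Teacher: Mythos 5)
Your proof is correct and follows exactly the route the paper intends: the corollary is stated without a separate proof precisely because it falls out of Lemma~\ref{precriticallemma} (equivalently the first variation formula~\eqref{varex1}) with $f(x)=x^{2}$, together with the weak nondegeneracy of $\sOm$ recorded in Section~\ref{backgroundsection}. Nothing is missing.
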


\subsection{}\label{preferredsection}
Given $\nabla \in \symcon(M, \Om)$, there are written $\rice_{i}\,^{j}$ for the \textit{Ricci endomorphism} $R_{i}\,^{j}$ and $\rice^{\circ k}$ for its $k$th power as a f\/iberwise endomorphism. Since $(\rice^{\circ k})_{ij} = (-1)^{k+1}(\rice^{\circ k})_{ji}$, $(\rice^{\circ k})_{ij}$ is symmetric if $k$ is odd, and skew-symmetric if $k$ is even. In particular, $\tr \rice^{\circ 2k+1} = 0 $ and
\begin{gather*}
\tr \rice^{\circ 2k} = -R^{pq}\big(\rice^{\circ 2k-1}\big)_{pq} = (-1)^{k} R_{i_{1}i_{2}}R^{i_{2}i_{3}}R_{i_{3}i_{4}}\cdots R^{i_{2k-2}i_{2k-1}}R_{i_{2k-1}i_{2k}}R^{i_{2k}i_{1}}.
\end{gather*}
\begin{Remark}
The functionals $\tr \rice^{\circ s}$ play a key role in the averaging procedure used in \cite[Section~2]{Fedosov-atiyahbottpatodi}, where they are called \textit{cycles of length~$s$}.
\end{Remark}
For $1 \leq k$ and $\ric^{\circ k}_{ij} = (\rice^{\circ k})_{ij}$, def\/ine $\rc_{(k)}\colon \symcon(M, \Om) \to \rea$ by
\begin{align}
\rc_{(k)}(\nabla)& = -\tfrac{1}{2k}\int_{M} \tr\big(\rice^{\circ 2k}\big) \, \Om_{n} \nonumber\\
&= \tfrac{1}{2k}\int_{M}R^{pq}\big(\rice^{\circ 2k-1}\big)_{pq}\,\Om_{n} = \tfrac{1}{2k}\sOm_{\nabla}\big(\ric^{\circ 2k-1}, \ric\big) .\label{rckdefined}
\end{align}

\begin{Lemma}\label{tauthamlemma}
The Hamiltonian vector field $\shm_{\rc_{(k)}}$ on $(\symcon(M, \Om), \sOm)$ generated by $\rc_{(k)}$ is
\begin{gather*}
\shm_{\rc_{(k)}}(\nabla)_{abc} = -\sd^{\ast}\ric^{\circ 2k-1}_{abc} = \nabla_{(a}\big(\rice^{\circ 2k-1}\big)_{bc)}.
\end{gather*}
\end{Lemma}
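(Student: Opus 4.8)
The plan is to compute the first variation $\vr_{\Pi}\rc_{(k)}$ along an arbitrary $\Pi \in T_{\nabla}\symcon(M, \Om) = \ssec^{3}(M)$ and then read off the Hamiltonian vector field from the defining relation $\sOm_{\nabla}(\shm_{\rc_{(k)}}, \Pi) = -\vr_{\Pi}\rc_{(k)}$ (the sign convention implicit in Lemma~\ref{precriticallemma}), invoking weak nondegeneracy of $\sOm$.

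I would work from the form $\rc_{(k)}(\nabla) = -\tfrac{1}{2k}\int_{M}\tr(\rice^{\circ 2k})\,\Om_{n}$. Since this is the integral of the trace of the $2k$-th power of the endomorphism $\rice$, cyclicity of the trace gives $\vr_{\Pi}\tr(\rice^{\circ 2k}) = 2k\,\tr(\rice^{\circ 2k-1}\,\vr_{\Pi}\rice)$; this is the device that avoids differentiating each of the $2k-1$ factors of $\rice^{\circ 2k-1}$ separately. By \eqref{ricvar} the variation of the Ricci tensor is $\vr_{\Pi}\ric = \sd\Pi$, so $\vr_{\Pi}\rice$ is the endomorphism associated to the symmetric tensor $\sd\Pi \in \ssec^{2}(M)$. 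The contraction identity displayed just before the lemma, $\tr(\rice^{\circ 2k}) = -R^{pq}(\rice^{\circ 2k-1})_{pq}$, is bilinear in the two (symmetric) Ricci factors and uses only their symmetry, so replacing the last factor $R$ by $\sd\Pi$ yields $\tr(\rice^{\circ 2k-1}\,\vr_{\Pi}\rice) = -(\sd\Pi)^{pq}(\rice^{\circ 2k-1})_{pq}$ (here $\rice^{\circ 2k-1}$ is symmetric because $2k-1$ is odd). Hence $\vr_{\Pi}\rc_{(k)} = \int_{M}(\sd\Pi)^{pq}(\rice^{\circ 2k-1})_{pq}\,\Om_{n} = \laa \sd\Pi, \rice^{\circ 2k-1}\raa$.

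Next I would transfer the divergence $\sd$ onto the second argument via the adjoint relation $\laa \sd\Pi, \rice^{\circ 2k-1}\raa = -\laa \Pi, \sd^{\ast}\rice^{\circ 2k-1}\raa$ (integration by parts against the $\nabla$-parallel volume $\Om_{n}$, the sign being $(-1)^{|\Pi||\sd|} = -1$ from the definition of the formal adjoint). Thus $-\vr_{\Pi}\rc_{(k)} = \laa \Pi, \sd^{\ast}\rice^{\circ 2k-1}\raa$. Finally, the pairing $\laa\dum,\dum\raa$ on $\ssec^{3}(M)$ is graded antisymmetric, so $\laa \Pi, \sd^{\ast}\rice^{\circ 2k-1}\raa = -\laa \sd^{\ast}\rice^{\circ 2k-1}, \Pi\raa$ (both tensors have odd degree $3$). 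The defining relation then reads $\sOm_{\nabla}(\shm_{\rc_{(k)}}, \Pi) = \sOm_{\nabla}(-\sd^{\ast}\rice^{\circ 2k-1}, \Pi)$ for all $\Pi$, and weak nondegeneracy gives $\shm_{\rc_{(k)}} = -\sd^{\ast}\rice^{\circ 2k-1}$; the equality $-\sd^{\ast}\rice^{\circ 2k-1}_{abc} = \nabla_{(a}(\rice^{\circ 2k-1})_{bc)}$ is then immediate from the formula for $\sd^{\ast}$ on $\ssec^{2}(M)$.

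The trace variation and the integration by parts are routine; the part demanding care is the sign bookkeeping, where three separate conventions must be combined: the index-raising and endomorphism conventions (giving $\rice_{i}{}^{j} = \Om^{jp}R_{ip}$) that fix the sign in the contraction identity, the factor $(-1)^{|\Pi||\sd|}$ in the adjoint, and the graded antisymmetry of $\laa\dum,\dum\raa$ on odd-degree tensors. It is precisely this last antisymmetry that supplies the overall minus sign distinguishing $-\sd^{\ast}\rice^{\circ 2k-1}$ from $+\sd^{\ast}\rice^{\circ 2k-1}$, so it must not be overlooked.
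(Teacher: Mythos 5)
Your proposal is correct and follows essentially the same route as the paper: vary $\rc_{(k)}$ using $\vr_{\Pi}\ric = \sd\Pi$ from~\eqref{ricvar}, obtain $\vr_{\Pi}\rc_{(k)} = \laa \sd\Pi, \ric^{\circ 2k-1}\raa$, and move $\sd$ to the other side of the pairing via the formal adjoint, with the sign conventions (the factor $(-1)^{|\Pi||\sd|}$ and the antisymmetry of $\sOm$ on $\ssec^{3}(M)$) handled consistently with Lemmas~\ref{precriticallemma} and~\ref{alhamlemma}. The only difference is that you spell out the cyclicity-of-trace step and the sign bookkeeping that the paper compresses into the single display~\eqref{varrck}.
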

\begin{proof}
For $\Pi \in T_{\nabla}\symcon(M, \Om)$, by~\eqref{ricvar} and~\eqref{rckdefined},
\begin{gather}\label{varrck}
\sOm_{\nabla}(\Pi, \shm_{\rc_{(k)}})  = \vr_{\Pi} \rc_{(k)}(\nabla) = -\tfrac{1}{2k}\vr_{\Pi}\int_{M}\tr\big(\rice^{\circ 2k}\big)\Om_{n}\\
\hphantom{\sOm_{\nabla}(\Pi, \shm_{\rc_{(k)}}) }{} = \tfrac{1}{2k}\vr_{\Pi}\int_{M}R^{pq}\big(\rice^{\circ 2k-1}\big)_{pq}\Om_{n}
= \laa \sd \Pi, \ric^{\circ 2k-1}\raa = -\sOm_{\nabla}\big( \Pi, \sd^{\ast}\ric^{\circ 2k-1}\big).\!\!\!\!\tag*{\qed}
\end{gather}
\renewcommand{\qed}{}
\end{proof}

\begin{Remark}Since, by def\/inition, $\nabla \in \symcon(M, \Om)$ is \textit{preferred} if it is critical for $\rc_{(1)}$, that is a~zero of $\shm_{\rc_{(1)}}$, Lemma~\ref{tauthamlemma} suggests regarding the zeros of $\shm_{\rc_{(k)}}$, for $k > 1$, as generalizations of preferred symplectic connections.
\end{Remark}

\begin{Remark} For any $\Psi \in \Symplecto(M, \Om)$, it follows from $\K(\Psi^{\ast}\nabla) = \K(\nabla)\circ \Psi$ and the similar equivariance of the Ricci tensor, that $\ex_{\phi}(\Psi^{\ast}\nabla) = \ex_{\phi}(\nabla)$ and $\rc_{(k)}(\Psi^{\ast}\nabla) = \rc_{(k)}(\nabla)$. Consequently, $\ex_{\phi}$ and $\rc_{(k)}$ are constant on $\Symplecto(M, \Om)$ orbits in $\symcon(M, \Om)$.
\end{Remark}

The Poisson bracket $\spl \F, \Gf \rpl$ of functionals $\F$ and $\Gf$ on $(\symcon(M, \Om), \sOm)$ is def\/ined to be the symplectic pairing
\begin{gather}\label{splbracketdefined}
\spl \F, \Gf \rpl = \sOm(\shm_{\F}, \shm_{\Gf})
\end{gather}
of the Hamiltonian vector f\/ields $\shm_{\F}$ and $\shm_{\Gf}$ generated by $\F$ and $\Gf$.

\begin{Lemma}\label{commutingflowslemma}
If $(M, \Om)$ is compact, the functionals $\rc_{(k)}$ and $\ex_{\phi}$ on $(\symcon(M, \Om), \sOm)$ Poisson commute for any $k \geq 1$ and any $\phi \in C^{4}(\rea)$.
\end{Lemma}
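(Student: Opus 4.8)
The plan is to recognize the Hamiltonian flow of $\ex_\phi$ as an infinitesimal action by symplectomorphisms and then invoke the $\Symplecto(M, \Om)$-invariance of $\rc_{(k)}$ recorded in the Remark following Lemma~\ref{tauthamlemma}. First I would write out the two Hamiltonian vector fields at $\nabla \in \symcon(M, \Om)$ using the lemmas already established. By Lemma~\ref{tauthamlemma}, $\shm_{\rc_{(k)}} = -\sd^{\ast}\ric^{\circ 2k-1}$, and by Lemma~\ref{precriticallemma}, $\shm_{\ex_\phi} = -\hop(g)$ where $g = \phi^{\prime}(\K(\nabla)) \in \cinf(M)$. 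The essential observation is that $\shm_{\ex_\phi} = -\hop(g) = -\lie_{\hm_g}\nabla$ is, up to sign, the value at $\nabla$ of the vector field generating the affine action of $\hm_g \in \ham(M, \Om) \subset \symplecto(M, \Om)$, equivalently the velocity at $t = 0$ of the pullback curve $t \mapsto \phi_t^{\ast}\nabla$, where $\phi_t$ is the flow of $\hm_g$.

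Next I would convert the Poisson bracket into a directional derivative. The proofs of Lemmas~\ref{tauthamlemma} and~\ref{precriticallemma} fix the convention $\vr_{\Pi}\F = \sOm_{\nabla}(\Pi, \shm_{\F})$, so antisymmetry of $\sOm$ gives $\spl \rc_{(k)}, \ex_\phi\rpl = \sOm_{\nabla}(\shm_{\rc_{(k)}}, \shm_{\ex_\phi}) = -\vr_{\shm_{\ex_\phi}}\rc_{(k)}$. Substituting $\shm_{\ex_\phi} = -\lie_{\hm_g}\nabla$ yields $\spl \rc_{(k)}, \ex_\phi\rpl = \vr_{\lie_{\hm_g}\nabla}\rc_{(k)}$. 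Because the straight line $t \mapsto \nabla + t\,\lie_{\hm_g}\nabla$ in the affine space and the pullback curve $t \mapsto \phi_t^{\ast}\nabla$ share the velocity $\lie_{\hm_g}\nabla$ at $t = 0$, this first variation equals $\tfrac{d}{dt}\big|_{t=0}\rc_{(k)}(\phi_t^{\ast}\nabla)$. Since $\phi_t \in \Symplecto(M, \Om)$ and $\rc_{(k)}$ is constant on $\Symplecto(M, \Om)$-orbits, the function $t \mapsto \rc_{(k)}(\phi_t^{\ast}\nabla)$ is constant, so its derivative vanishes and $\spl \rc_{(k)}, \ex_\phi\rpl = 0$.

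I expect the only delicate point to be the bookkeeping of signs together with the identification of the affine first variation $\vr_{\lie_{\hm_g}\nabla}\rc_{(k)}$ with the derivative of $\rc_{(k)}$ along the pullback flow; once the conventions of the bracket~\eqref{splbracketdefined} and of $\shm$ are pinned down, this identification is immediate from smoothness of the functional. As an independent check one can compute the bracket by hand: writing $\spl \rc_{(k)}, \ex_\phi\rpl = \vr_{\shm_{\rc_{(k)}}}\ex_\phi = \sOm_{\nabla}(\hop(g), \shm_{\rc_{(k)}})$ and moving $\sd^{\ast}$ across the pairing using $\sd\hop(g) = \lie_{\hm_g}\ric$ from~\eqref{tracedhf} reduces the bracket to a constant multiple of $\int_{M}(\lie_{\hm_g}\ric)_{ij}(\ric^{\circ 2k-1})^{ij}\,\Om_n$. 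Since $\lie_{\hm_g}$ is a derivation commuting with the raising and lowering of indices, this integrand is a constant multiple of $\lie_{\hm_g}\big(\tr \rice^{\circ 2k}\big)$, and then $\int_{M}\lie_{\hm_g}\big(\tr \rice^{\circ 2k}\big)\,\Om_n = \int_{M}\lie_{\hm_g}\big(\tr \rice^{\circ 2k}\,\Om_n\big) = 0$ by Cartan's formula and Stokes' theorem, using $\lie_{\hm_g}\Om_n = 0$ and compactness of $M$.
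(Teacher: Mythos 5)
Your proposal is correct, and your primary argument takes a genuinely different route from the paper, while your ``independent check'' is precisely the paper's own proof. The paper computes directly: it writes $\spl\ex_{\phi}, \rc_{(k)}\rpl = \sOm(\hop(\phi^{\prime}(\K)), \sd^{\ast}\ric^{\circ 2k-1})$, moves $\sd^{\ast}$ across the pairing, invokes $\sd\hop(f) = \lie_{\hm_{f}}\ric$ from~\eqref{tracedhf}, recognizes the integrand as $\tfrac{1}{2k}\lie_{\hm_{\phi^{\prime}(\K)}}(\tr\rice^{\circ 2k})$, and concludes by Stokes --- exactly the chain you sketch in your last paragraph. Your main argument instead observes that $\shm_{\ex_{\phi}}(\nabla) = -\lie_{\hm_{g}}\nabla$ with $g = \phi^{\prime}(\K(\nabla))$ is tangent at $\nabla$ to the $\Ham(M,\Om)$-orbit, so that $\spl\rc_{(k)},\ex_{\phi}\rpl = \vr_{\lie_{\hm_{g}}\nabla}\rc_{(k)} = \tfrac{d}{dt}\big|_{t=0}\rc_{(k)}(\phi_{t}^{\ast}\nabla) = 0$ by the $\Symplecto(M,\Om)$-invariance of $\rc_{(k)}$ recorded in the remark after Lemma~\ref{tauthamlemma}. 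This is cleaner and explains \emph{why} the bracket vanishes (any $\Symplecto$-invariant functional Poisson commutes with any functional whose Hamiltonian vector field is everywhere tangent to the symplectomorphism orbits, which covers all the $\ex_{f}$ at once); its only cost is the chain-rule identification of the affine first variation with the derivative along the pullback curve, which you correctly flag and which is at the same level of formal rigor the paper itself uses elsewhere (e.g.\ in deriving~\eqref{isotropyidentity}). Your sign bookkeeping with the convention $\vr_{\Pi}\F = \sOm_{\nabla}(\Pi, \shm_{\F})$ is consistent with the proofs of Lemmas~\ref{precriticallemma} and~\ref{tauthamlemma}, and antisymmetry of $\spl\dum,\dum\rpl$ makes the ordering of the two functionals immaterial.
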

\begin{proof}
By def\/inition of the Poisson bracket $\spl\dum, \dum \rpl$ on $(\symcon(M, \Om), \sOm)$,~\eqref{tracedhf}, and Lemma~\ref{precriticallemma},
\begin{align*}
\spl\ex_{\phi}, \rc_{(k)}\rpl & = \sOm(\shm_{\ex_{\phi}}, \shm_{\rc_{(k)}}) = \sOm\big(\hop(\phi^{\prime}(\K)), \sd^{\ast}\ric^{\circ 2k-1}\big)\\
& = - \laa\sd\hop(\phi^{\prime}(\K)), \ric^{\circ 2k-1}\raa = -\laa \lie_{\hm_{\phi^{\prime}(\K)}}\ric, \ric^{\circ 2k-1}\raa \\
& = \int_{M}(\lie_{\hm_{\phi^{\prime}(\K)}}\ric)_{i}\,^{j}\big(\rice^{\circ 2k-1}\big)_{j}\,^{i} \, \Om_{n} = \tfrac{1}{2k}\int_{M} \lie_{\hm_{\phi^{\prime}(\K)}}\big(\rice^{\circ 2k}\big) \, \Om_{n} = 0,
\end{align*}
the last equality because the integral of a divergence vanishes.
\end{proof}

\begin{Remark}
 It would be interesting to know what can be said about existence of solutions to the Hamiltonian f\/lows
\begin{gather}\label{flows}
\tfrac{d}{dt}\nabla(t) = c \shm_{\rc_{(k)}}(\nabla(t)).
\end{gather}
Here $\nabla(t)$ is a path in $\symcon(M, \Om)$ and $c \in \rea$. By Lemma~\ref{commutingflowslemma}, $\ex$ is constant along the f\/lows~\eqref{flows}.
\end{Remark}

\section[A symplectic af\/f\/ine action]{A symplectic af\/f\/ine action of $\boldsymbol{(\hhamsec(M), [\dum, \dum])}$ on $\boldsymbol{(\symcon(M, \Om), \sOm)}$}\label{atiyahbottsection}

\subsection{}\label{gaugeactionsection}
Recall from Remark~\ref{framebundleremark} that $\gge(\sfr)$ is the group of gauge transformations of the linear symplectic frame bundle $\sfr \to M$. The action of $\gge(\sfr)$, by pullback, on principal connections induce the right action, $\tcon(M, \Om) \times \gge(\sfr) \to \tcon(M, \Om)$, on the associated covariant derivatives given by
\begin{gather}\label{standardgaugeaction}
g\cdot \nabla = \nabla + (g^{-1})_{p}\,^{k}\nabla_{i}g_{j}\,^{p} = \nabla - g^{kp}\nabla_{i}g_{jp},
\end{gather}
where $g_{iq}g_{j}\,^{q} = g_{i}\,^{p}g_{j}\,^{q}\Om_{pq} = \Om_{ij}$ and $(g^{-1})_{ij} = -g_{ji}$. The covariant derivatives associated with principal connections on $\sfr$ preserve the symplectic structure, but need not be torsion-free, and the action of $\gge(\sfr)$ does not preserve the torsion, for, by~\eqref{standardgaugeaction}, if~$\nabla$ is torsion-free, then $\nabla^{g}$ is torsion-free if and only if $\nabla_{[i}g_{j]}\,^{k} = 0$. This need not be true in general. The action~\eqref{standardgaugeaction} is af\/f\/ine (as in Section~\ref{affineactionsection}). Its linear part $\lin(g)\colon T_{\nabla}\tcon(M, \Om) \to T_{g\cdot \nabla}\tcon(M, \Om)$ is $(\lin(g)\Pi)_{ijk} = (g^{-1})_{pk}\Pi_{iq}\,^{p}g_{j}\,^{q} = g_{j}\,^{p}g_{k}\,^{q}\Pi_{ipq}$, and there follows $\sOm(\lin(g)\al, \lin(g)\be) = \sOm(\al, \be)$, showing that the action of $\gge(\sfr)$ on $\tcon(M, \Om)$ is symplectic af\/f\/ine. However, there is no obvious way to modify the action of $\gge(\sfr)$ on $\tcon(M, \Om)$ to obtain an action on $\symcon(M, \Om)$.

\subsection{}\label{yangmillssection}
The curvature is a moment map for the gauge group action on the space of principal connections on a principal bundle over a symplectic manifold. This goes back to \cite{Atiyah-Bott}, where this construction is used in the special case of a surface. In the present context this can be realized concretely as follows. Given $\al \in \ssec^{2}(M)$, regarded as an element of the Lie algebra $\lge(\sfr)$ of inf\/initesimal gauge transformations (see Remark~\ref{framebundleremark}), regard the curvature of $\nabla\in \tcon(M, \Om)$ as a $\lge(\sfr)$-valued two-form $\curv$ and write $\tr (\curv\al) = R_{ijp}\,^{q}\al_{q}\,^{p}$ for the two-form resulting from tracing the composition of the curvature with the endomorphism $\al_{i}\,^{j}$. However the dual $\lge(\sfr)^{\ast}$ is understood, it includes $\Ad(\sfr)$-valued $2n$-forms.

\begin{Theorem}\label{abtheorem}
On a compact symplectic manifold $(M, \Om)$, the map $\fc\colon \tcon(M, \Om) \to \lge(\sfr)^{\ast}$ defined by $\fc(\nabla) = \curv \wedge \Om_{n-1}$ is a moment map for the action of the gauge group $\gge(\sfr)$ on $\tcon(M, \Om)$.
\end{Theorem}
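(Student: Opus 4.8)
The plan is to verify the two defining properties of a moment map directly: that for each fixed $\al$ the function $\nabla \mapsto \lb\fc(\nabla),\al\ra$ generates the infinitesimal gauge action, and that $\fc$ is equivariant. First I would make the pairing explicit. Identifying $\al \in \lge(\sfr)$ with $\al_{ij} \in \ssec^{2}(M)$ (equivalently the endomorphism $\al_{i}\,^{j}$ with $\al_{[ij]}=0$) as in Remark~\ref{framebundleremark}, the value of $\fc(\nabla) = \curv\wedge\Om_{n-1}$ on $\al$ is $\lb\fc(\nabla),\al\ra = \int_{M} \tr(\curv\,\al)\wedge\Om_{n-1}$, with $\tr(\curv\,\al)_{ij} = R_{ijp}\,^{q}\al_{q}\,^{p}$. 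Applying \eqref{twokformwedge} with $k=1$ together with the curvature trace identity $\Om^{ij}R_{ijp}\,^{q}=2R_{p}\,^{q}$ (a raising of $R_{p}\,^{p}\,_{ij}=2R_{ij}$ from Section~\ref{backgroundsection}) collapses this to $\lb\fc(\nabla),\al\ra = \int_{M} R_{p}\,^{q}\al_{q}\,^{p}\,\Om_{n}$; so the generating Hamiltonian is a Ricci pairing, which is the source of the Ricci term advertised in the introduction. Differentiating the gauge action \eqref{standardgaugeaction} along $g=\exp(t\al)$ and using that $\nabla$ preserves $\Om$, the infinitesimal generator is $X^{\al}(\nabla)_{ijk} = -\nabla_{i}\al_{jk}$ (the overall sign being fixed by the convention of Section~\ref{affineactionsection} for the generator of this right action); since $\al_{jk}$ is symmetric and $\al_{p}\,^{p}=0$, this lies in $T_{\nabla}\tcon(M,\Om)=\Ga(\ctm\tensor S^{2}(\ctm))$.

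The heart of the proof is the first variation. Along $\Pi \in T_{\nabla}\tcon(M,\Om)$ the curvature varies to first order by $\vr_{\Pi}\curv_{ijk}\,^{l} = 2\nabla_{[i}\Pi_{j]k}\,^{l} = \dn\Pi_{ijk}\,^{l}$ (the quadratic correction appearing in \eqref{riemvar} being irrelevant at first order and valid here despite the presence of torsion), so that, again by \eqref{twokformwedge},
\begin{gather*}
\vr_{\Pi}\lb\fc(\nabla),\al\ra = \int_{M}\tr\big((\dn\Pi)\,\al\big)\wedge\Om_{n-1} = \int_{M}\Om^{ij}\nabla_{i}\Pi_{jp}\,^{q}\al_{q}\,^{p}\,\Om_{n}.
\end{gather*}
Because $\nabla$ preserves the volume form $\Om_{n}$ and $M$ is compact, I would integrate by parts to move $\nabla_{i}$ off $\Pi$ and onto $\al$, producing $-\int_{M}\Om^{ij}\Pi_{jp}\,^{q}\nabla_{i}\al_{q}\,^{p}\,\Om_{n}$. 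Lowering all indices and relabelling, the resulting contraction pairs the derivative index against the first slot of $\Pi$ and the two indices of the symmetric tensor $\al$ against the two symmetric slots of $\Pi$, which is precisely $\sOm_{\nabla}(X^{\al},\Pi)$ for the pairing of Section~\ref{backgroundsection} on $T_{\nabla}\tcon(M,\Om)$. This establishes the weakly Hamiltonian identity $\vr_{\Pi}\lb\fc(\nabla),\al\ra = \sOm_{\nabla}(X^{\al},\Pi)$ in the convention of Section~\ref{momentmapsection}; the action itself is already known to be symplectic affine from Section~\ref{gaugeactionsection}.

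It remains to promote weak Hamiltonicity to equivariance. Under a gauge transformation the curvature transforms by conjugation, $\curv(g\cdot\nabla)=\Ad_{g^{-1}}\circ\,\curv(\nabla)$, and the trace pairing is conjugation invariant, $\tr(\Ad_{g^{-1}}\curv\cdot\al)=\tr(\curv\cdot\Ad_{g}\al)$. Since gauge transformations preserve $\Om$ and hence fix $\Om_{n-1}$, wedging and integrating yields $\lb\fc(g\cdot\nabla),\al\ra = \lb\fc(\nabla),\Ad_{g}\al\ra$, which is exactly the intertwining of the gauge action on $\tcon(M,\Om)$ with the coadjoint action on $\lge(\sfr)^{\ast}$. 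Thus $\fc$ is an equivariant moment map, in the standard manner of Atiyah and Bott.

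The main obstacle I anticipate is purely computational: tracking the index positions and the signs produced by the antisymmetry of $\Om^{ij}$ through the two applications of \eqref{twokformwedge} and the integration by parts, and confirming that the symmetrizations built into $\Pi\in\Ga(\ctm\tensor S^{2}(\ctm))$ and into the symmetric $\al$ conspire so that the integrated expression is precisely $\sOm_{\nabla}(X^{\al},\Pi)$ and not merely a scalar multiple of it. A secondary point needing care is the interpretation of $\lge(\sfr)^{\ast}$: as with $\ham(M,\Om)^{\ast}$ in Section~\ref{momentmapsection}, one reads $\fc(\nabla)$ through the linear functionals $\al\mapsto\lb\fc(\nabla),\al\ra$ on $\lge(\sfr)$, so that all the equalities above, including equivariance, are to be understood formally in the infinite-dimensional setting.
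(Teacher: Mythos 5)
Your overall skeleton — make the pairing explicit, compute the infinitesimal generator of the gauge action, take the first variation of the curvature, integrate by parts, and check equivariance by conjugation-invariance of the trace — is exactly the paper's strategy, and your treatment of the generator $\X^{\al}=-\nabla_{i}\al_{jk}$ and of equivariance is fine. The genuine gap is that at three separate points you treat $\nabla\in\tcon(M,\Om)$ as if it were torsion-free, and the whole content of Theorem~\ref{abtheorem} beyond the surface-level Atiyah--Bott template is precisely the bookkeeping of torsion. First, your reduction of the pairing to $\int_{M}R_{p}\,^{q}\al_{q}\,^{p}\,\Om_{n}$ uses $R_{p}\,^{p}\,_{ij}=2R_{ij}$, which is derived in Section~\ref{backgroundsection} from the algebraic Bianchi identity and fails for connections with torsion (the paper says so explicitly just after Lemma~\ref{alhamlemma}); the proof must keep the full trace $-\tfrac{1}{2}\int_{M}\al^{ij}R_{p}\,^{p}\,_{ij}\,\Om_{n}$. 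Second, your assertion that $\vr_{\Pi}\curv_{ijk}\,^{l}=\dn\Pi_{ijk}\,^{l}$ is ``valid here despite the presence of torsion'' is false: for $\nabla\in\tcon(M,\Om)$ the first variation is $(\vr_{\Pi}R)_{ijkl}=\dn\Pi_{ijkl}+\tau_{ij}\,^{p}\Pi_{pkl}$, the extra term coming from $\nabla_{X}Y-\nabla_{Y}X-[X,Y]=\tau(X,Y)$ in the curvature identity. Third, your integration by parts invokes only that $\nabla$ preserves $\Om_{n}$, but for a volume-preserving connection with torsion the divergence theorem reads $\int_{M}(\nabla_{p}X^{p}+X^{p}\tau_{pq}\,^{q})\,\Om_{n}=0$, so moving $\nabla_{i}$ off $\Pi$ produces a boundary-free torsion correction as well.

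In the paper's computation these two torsion contributions — the one from $\vr_{\Pi}\curv$ (after using $2\tau_{ip}\,^{p}=-\tau_{p}\,^{p}\,_{i}$, which follows from $\tau_{[ijk]}=0$, itself a consequence of $d\Om=0$) and the one from the corrected divergence theorem — cancel against each other, which is why the clean identity $\vr_{\Pi}\laa\fc(\nabla),\al\raa=\int_{M}\Pi^{ijk}\nabla_{i}\al_{jk}\,\Om_{n}=-\sOm(\X^{\al},\Pi)$ survives. Your final answer is therefore correct, but only because two compensating omissions happen to cancel; as written, neither intermediate step is valid on $\tcon(M,\Om)$ (both are valid on $\symcon(M,\Om)$, which is not the space the theorem concerns). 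To repair the proof you need to carry the torsion terms through and exhibit their cancellation explicitly.
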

\begin{proof}
By~\eqref{twokformwedge}, $\fc(\nabla) = \tfrac{1}{2}R_{p}\,^{p}\,_{ij}\Om_{n}$, so
\begin{gather}\label{fctcon0}
\laa \fc(\nabla), \al \raa = \int_{M} \tr(\curv\al)\wedge \Om_{n-1} = -\tfrac{1}{2}\int_{M} \al^{ij}R_{p}\,^{p}\,_{ij}\,\Om_{n},
\end{gather}
If $\nabla$ is a connection preserving a volume form $\mu$ and having torsion $\tau_{ij}\,^{k}$, then the divergence $\nabla_{p}X^{p}$ def\/ined by the connection and the divergence $(\lie_{X}\mu)/\mu$ def\/ined by the volume form are related by $(\lie_{X}\mu)/\mu = \nabla_{p}X^{p} + X^{p}\tau_{pq}\,^{q}$. For $\nabla \in \tcon(M, \Om)$ there results
\begin{gather}\label{tip}
\int_{M}\big( \nabla_{p}X^{p} + X^{p}\tau_{pq}\,^{q}\big)\,\Om_{n} = \int_{M}\lie_{X}\Om_{n} = 0.
\end{gather}
The f\/irst variation of the curvature of $\nabla \in \tcon(M, \Om)$ along $\Pi \in T_{\nabla}\tcon(M, \Om)$ is $(\vr_{\Pi}R)_{ijkl} = \dn\Pi_{ijkl} + \tau_{ij}\,^{p}\Pi_{pkl}$. Note that $0 = d\Om_{ijk} - 3\nabla_{[i}\Om_{jk]} = \tau_{[ijk]}$. Tracing this yields $2\tau_{ip}\,^{p} = -\tau_{p}\,^{p}\,_{i}$. Viewing $\al_{ij} \in \ssec^{2}(M)$ as an element of $\lge(\sfr)$ and dif\/ferentiating the action of $\exp(t\al)$ on $\tcon(M, \Om)$ yields the vector f\/ield $\X^{\al}(\nabla) = \tfrac{d}{dt}_{t = 0}(\exp(-t\al)\cdot \nabla) = -\nabla_{i}\al_{jk}$. Applying the preceding observations to~\eqref{fctcon0} and integrating by parts using~\eqref{tip} yields
\begin{align*}
\vr_{\Pi}\laa \fc(\nabla), \al \raa &= -\tfrac{1}{2}\vr_{\Pi}\int_{M} \al^{ij}R_{p}\,^{p}\,_{ij}\,\Om_{n} = \int_{M}\big(\al^{ij}\nabla^{p}\Pi_{pij} - \tfrac{1}{2}\tau_{q}\,^{qp}\Pi_{pij}\al^{ij}\big)\,\Om_{n}\\
& = \int_{M}\big( \Pi^{ijk}\nabla_{i}\al_{jk} - \nabla_{p}\big(\al^{ij}\Pi^{p}\,_{ij}\big) - \al^{ij}\Pi^{p}\,_{ij}\tau_{pq}\,^{q}\big)\,\Om_{n}\\
& = \int_{M}\Pi^{ijk}\nabla_{i}\al_{jk}\,\Om_{n} = - \sOm( \X^{\al}, \Pi).
\end{align*}
This shows that $\X^{\al}$ is the Hamiltonian vector f\/ield on $\tcon(M, \Om)$ generated by $\laa \fc(\nabla), \al\raa$. Since~$\fc$ is by construction $\gge(\sfr)$-equivariant, this shows $\fc$ is a moment map.
\end{proof}

\begin{Lemma}\label{alhamlemma}
For $\al \in \ssec^{2}(M)$, define $\scal_{\al}\colon \symcon(M, \Om) \to \rea$ by $\scal_{\al}(\nabla) = \laa \al, \ric(\nabla)\raa$. For a~compactly supported $\al \in\ssec^{2}(M)$ the Hamiltonian vector field generated on $(\symcon(M, \Om), \sOm)$ by $\scal_{\al}$ is $\shm_{\scal_{\al}}= -\sd^{\ast}\al$.
\end{Lemma}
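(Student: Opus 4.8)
The plan is to reduce the claim to the first-variation formula for the Ricci tensor together with the adjointness of $\sd$ and $\sd^{\ast}$, exactly as in the proof of Lemma~\ref{tauthamlemma}. Recall that the Hamiltonian vector field of a functional $\F$ on $(\symcon(M, \Om), \sOm)$ is characterized by $\vr_{\Pi}\F = \sOm_{\nabla}(\Pi, \shm_{\F})$ for all $\Pi \in T_{\nabla}\symcon(M, \Om) = \ssec^{3}(M)$, and that on tangent vectors the symplectic form is the pairing, $\sOm_{\nabla}(\Pi, \be) = \laa \Pi, \be\raa$. So it suffices to show $\vr_{\Pi}\scal_{\al} = \sOm_{\nabla}(\Pi, -\sd^{\ast}\al)$.

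First I would compute the first variation. Since $\al$ is independent of $\nabla$, bilinearity of $\laa\dum, \dum\raa$ gives $\vr_{\Pi}\scal_{\al}(\nabla) = \laa \al, \vr_{\Pi}\ric\raa$. By the term linear in $t$ of~\eqref{ricvar}, the linearization of the Ricci tensor is $\vr_{\Pi}\ric = \sd\Pi$, so that $\vr_{\Pi}\scal_{\al} = \laa \al, \sd\Pi\raa$.

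Next I would move $\sd$ off $\Pi$ using the graded adjoint. Since $\al$ and $\sd\Pi$ both have degree two, graded symmetry of the pairing gives $\laa \al, \sd\Pi\raa = \laa \sd\Pi, \al\raa$; then the defining property of the formal adjoint, with $|\Pi| = 3$ and $|\sd| = -1$, yields $\laa \sd\Pi, \al\raa = (-1)^{3\cdot(-1)}\laa \Pi, \sd^{\ast}\al\raa = -\laa \Pi, \sd^{\ast}\al\raa$. Here compact support of $\al$ (or compactness of $M$) justifies the integration by parts implicit in the adjunction. Because $\sd^{\ast}\al \in \ssec^{3}(M) = T_{\nabla}\symcon(M, \Om)$, this last pairing equals $\sOm_{\nabla}(\Pi, \sd^{\ast}\al)$, so $\vr_{\Pi}\scal_{\al} = -\sOm_{\nabla}(\Pi, \sd^{\ast}\al) = \sOm_{\nabla}(\Pi, -\sd^{\ast}\al)$, and comparing with the characterization of $\shm_{\scal_{\al}}$ gives $\shm_{\scal_{\al}} = -\sd^{\ast}\al$.

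There is no substantial obstacle here; the computation is a one-line specialization of the argument already used for Lemma~\ref{tauthamlemma}, with $\ric^{\circ 2k-1}$ replaced by the fixed tensor $\al$ (and without the combinatorial factor coming from differentiating a power of $\ric$). The only point requiring care is the bookkeeping of the graded sign in the adjunction $\laa \sd\Pi, \al\raa = -\laa \Pi, \sd^{\ast}\al\raa$, which is precisely what produces the minus sign in $\shm_{\scal_{\al}} = -\sd^{\ast}\al$.
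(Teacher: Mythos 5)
Your proposal is correct and follows the same route as the paper's own (one-line) proof: linearize $\ric$ via~\eqref{ricvar} to get $\vr_{\Pi}\scal_{\al} = \laa \al, \sd\Pi\raa$ and then pass $\sd$ to its formal adjoint, with the sign $(-1)^{|\Pi||\sd|} = -1$ producing $\shm_{\scal_{\al}} = -\sd^{\ast}\al$. The only difference is that you spell out the graded-symmetry and adjunction bookkeeping that the paper leaves implicit.
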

\begin{proof}
By~\eqref{ricvar}, $\sOm_{\nabla}(\Pi, \shm_{\scal_{\al}}) = \vr_{\Pi} \scal_{\al}(\nabla) = \sOm_{\nabla}(\al, \sd_{\nabla}\Pi) = -\sOm_{\nabla}( \Pi, \sd_{\nabla}^{\ast}\al)$.
\end{proof}

If $\nabla \in \symcon(M, \Om)$, then $\fc(\nabla) = (1/2)R_{p}\,^{p}\,_{ij}\Om_{n} = R_{ij}\Om_{n}$, because $R_{p}\,^{p}\,_{ij} = 2R_{ij}$. (Note that the identity $R_{p}\,^{p}\,_{ij} = 2R_{ij}$ fails for $\nabla \in \tcon(M, \Om)$ having nonvanishing torsion.) Hence, by~\eqref{fctcon0}, $\laa \fc(\nabla), \al \raa = -\laa \al, \ric(\nabla)\raa$ for $\nabla \in \symcon(M, \Om)$. That is the restriction of $-\laa \fc(\nabla), \al \raa$ to $\symcon(M, \Om)$ equals the functional $\sR_{\al}$ def\/ined in Lemma~\ref{alhamlemma}. For $\Pi \in \Ga(\ctm \tensor S^{2}(\ctm))$ let $\Q(\Pi)_{ijk} = \Pi_{(ijk)}$ and $\Q^{\perp}(\Pi)_{ijk} = (\Id - \Q)(\Pi)_{ijk} = \tfrac{2}{3}(\Pi_{ijk} - \Pi_{(jk)i})$ be the projections onto $\ssec^{3}(M)$ and its complement. The Hamiltonian vector f\/ield $\shm_{\sR_{\al}}$ on $\symcon(M, \Om)$ is the projection $\Q(-\X^{\al}) = \sd^{\ast}\al$ onto $T\symcon(M, \Om)$ of the Hamiltonian vector f\/ield of $-\laa \fc(\nabla), \al \raa$ on $\tcon(M, \Om)$.

Recall from~\eqref{splbracketdefined} the def\/inition of the Poisson bracket $\spl\dum, \dum\rpl$ on $(\symcon(M, \Om), \sOm)$.
\begin{Lemma}\label{poissonlemma}
Let $(M, \Om)$ be a compact symplectic manifold. For $\al, \be \in \ssec^{2}(M)$, the Poisson bracket $\spl\scal_{\al}, \scal_{\be}\rpl$ of $\scal_{\al}$ and $\scal_{\be}$ is
\begin{gather*}
\spl\scal_{\al}, \scal_{\be}\rpl = -\tfrac{2}{3}\laa \sd \al, \sd\be\raa + \tfrac{1}{3}\laa \al \twprod \be, \curv \raa + \tfrac{1}{3}\scal_{(\al, \be)},
\end{gather*}
where $\al \twprod \be \in \Wl^{2, 2}(\ctm)$ is defined by $(\al \twprod \be)_{ijkl} = 2\al_{k[i}\be_{j]l} + 2\al_{l[i}\be_{j]k} = - (\be \twprod \al)_{ijkl}$.
The Hamiltonian vector field on $(\symcon(M, \Om), \sOm)$ generated by $\spl\scal_{\al}, \scal_{\be}\rpl$ is
\begin{gather}\label{hamalbe}
\shm_{\spl\scal_{\al}, \scal_{\be}\rpl} = -\tfrac{1}{3}(\sd^{\ast}(\al, \be) + \shl\al, \be\shr) = \tfrac{1}{3}\big(\shm_{\scal_{(\al, \be)}} - \shl\al, \be\shr\big).
\end{gather}
\end{Lemma}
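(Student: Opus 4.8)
The plan is to evaluate the Poisson bracket directly from its definition~\eqref{splbracketdefined} together with the Hamiltonian vector fields furnished by Lemma~\ref{alhamlemma}, and then to reduce the resulting quadratic pairing to the three stated summands using the commutation identity of Lemma~\ref{sdcommutationlemma}. First I would write $\spl\scal_\al, \scal_\be\rpl = \sOm(\shm_{\scal_\al}, \shm_{\scal_\be}) = \sOm(\sd^*\al, \sd^*\be) = \laa\sd^*\al, \sd^*\be\raa$, using that $\shm_{\scal_\al} = -\sd^*\al$ and $\shm_{\scal_\be} = -\sd^*\be$ lie in $T_\nabla\symcon(M, \Om) = \ssec^3(M)$, on which $\sOm$ restricts to the pairing $\laa\dum,\dum\raa$. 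Since $\sd$ is the formal adjoint of $\sd^*$ and $|\al| = 2$ is even, the adjoint relation gives $\laa\sd^*\al, \sd^*\be\raa = \laa\al, \sd\sd^*\be\raa$. Now I would invoke the $k = 2$ case of~\eqref{sdsdast}, which expresses $3\sd\sd^*\be$ as $(\be, \ric)$ minus the curvature term $2R^p{}_{(i_1 i_2)}{}^q\be_{pq}$ minus $2\sd^*\sd\be$, and pair each summand against $\al$.

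The three resulting pairings are handled as follows. For the $\sd^*\sd\be$ term, the adjoint relation (again with even degree) turns $\laa\al, \sd^*\sd\be\raa$ into $\laa\sd\al, \sd\be\raa$, producing the first stated term with coefficient $-\tfrac{2}{3}$. For the $(\be, \ric)$ term, the skew-adjointness~\eqref{algliepairing} of $(\be, \dum)$ together with the antisymmetry $(\be, \al) = -(\al, \be)$ of the algebraic bracket turns $\laa\al, (\be, \ric)\raa$ into $\laa(\al, \be), \ric\raa = \scal_{(\al, \be)}$, the last stated term. The remaining curvature term is the one requiring care: I must verify that $-2\laa\al, R^p{}_{(ij)}{}^q\be_{pq}\raa = \laa\al\twprod\be, \curv\raa$. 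This is a pure index computation: using the symplectic curvature symmetries $R_{ij[kl]} = 0$ and $R_{[ij]kl} = R_{ijkl}$ one expands $(\al\twprod\be)_{ijkl}R^{ijkl}$, and through the $p = 2$ pairing~\eqref{ompairing} on $\Wl^{2,2}(\ctm)$ matches the two expressions. I expect this index matching, with its attendant sign bookkeeping from the conventions for raising and lowering indices with $\Om$ (recall $\Om^{ip}\Om_{pj} = -\delta_j{}^i$), to be the main obstacle; everything else is formal. Collecting the overall factor $\tfrac{1}{3}$ coming from $3\sd\sd^*\be$ then yields the claimed formula for $\spl\scal_\al, \scal_\be\rpl$.

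For the Hamiltonian vector field~\eqref{hamalbe}, I would use the identity $\shm_{\spl\scal_\al, \scal_\be\rpl} = [\shm_{\scal_\al}, \shm_{\scal_\be}]$, the affine analogue (with the paper's sign conventions) of $\hm_{\{f, g\}} = [\hm_f, \hm_g]$, valid in this formal symplectic setting since $\sOm$ is parallel hence closed. Because $\shm_{\scal_\al}(\nabla) = -\sd^*_\nabla\al$ depends on $\nabla$ through $\sd^*_\nabla$, its directional derivative is computed from the variation formula~\eqref{sdasttransform}, $\sd^*_{\nabla + \Pi}\ga = \sd^*_\nabla\ga + \tfrac{1}{3}(\ga, \Pi)$. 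Differentiating $-\sd^*\be$ in the direction $\shm_{\scal_\al} = -\sd^*\al$ gives $\tfrac{1}{3}(\be, \sd^*\al)$, and symmetrically the other term gives $\tfrac{1}{3}(\al, \sd^*\be)$, so that $[\shm_{\scal_\al}, \shm_{\scal_\be}] = \tfrac{1}{3}\big((\be, \sd^*\al) - (\al, \sd^*\be)\big) = -\tfrac{1}{3}\big((\sd^*\al, \be) + (\al, \sd^*\be)\big)$. Finally the coboundary identity~\eqref{twobracketsidentity} rewrites $(\sd^*\al, \be) + (\al, \sd^*\be) = \shl\al, \be\shr + \sd^*(\al, \be)$, giving $\shm_{\spl\scal_\al, \scal_\be\rpl} = -\tfrac{1}{3}(\sd^*(\al, \be) + \shl\al, \be\shr)$, which is~\eqref{hamalbe} after substituting $\shm_{\scal_{(\al, \be)}} = -\sd^*(\al, \be)$ from Lemma~\ref{alhamlemma}. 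This route avoids varying $\sd_\nabla$ and $\curv(\nabla)$ in the explicit formula, and the agreement of the two computations serves as a built-in consistency check.
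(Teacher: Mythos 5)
Your proposal is correct and, for the bracket formula itself, follows exactly the paper's route: $\spl\scal_{\al},\scal_{\be}\rpl = \laa\sd^{\ast}\al,\sd^{\ast}\be\raa = \laa\al,\sd\sd^{\ast}\be\raa$, followed by the $k=2$ case of Lemma~\ref{sdcommutationlemma} together with the invariance~\eqref{algliepairing} to produce the three terms. For the Hamiltonian vector field the paper computes the first variation $\vr_{\Pi}\laa\sd^{\ast}_{\nabla}\al,\sd^{\ast}_{\nabla}\be\raa$ via~\eqref{sdasttransform} and reads off $\shm$ from $\sOm_{\nabla}(\Pi,\shm)=\vr_{\Pi}$, whereas you compute the commutator $[\shm_{\scal_{\al}},\shm_{\scal_{\be}}]$ directly; since both arguments rest on the same two inputs, \eqref{sdasttransform} and~\eqref{twobracketsidentity}, the difference is purely organizational.
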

\begin{proof}
Since $\curv_{ijkl} \in \Wl^{2, 2}(\ctm)$, the pairing~\eqref{wpairing} takes the form
\begin{gather*}
\laa \al \twprod \be, \curv \raa = 2\int_{M}\al^{il}\be^{jk}R_{ijkl}\,\Om_{n}.
\end{gather*}
By~\eqref{sdsdast} of Lemma~\ref{sdcommutationlemma},
\begin{gather}\label{ssbc}
\sd \sd^{\ast}\be_{ij} = -\tfrac{2}{3}\sd^{\ast}\sd\be_{ij} - \tfrac{2}{3}R^{p}\,_{(ij)}\,^{q}\be_{pq} + \tfrac{1}{3}(\be, \ric)_{ij}.
\end{gather}
By def\/inition of the Poisson bracket $\spl\scal_{\al}, \scal_{\be}\rpl$, Lemma~\ref{alhamlemma},~\eqref{ssbc}, and~\eqref{algliepairing},
\begin{align}
\spl\scal_{\al}, \scal_{\be}\rpl &= \sOm(\shm_{\scal_{\al}}, \shm_{\scal_{\be}}) = \laa \sd^{\ast}\al, \sd^{\ast}\be\raa = \laa \al, \sd \sd^{\ast}\be \raa \nonumber\\
& = -\tfrac{2}{3}\laa \al, \sd^{\ast}\sd\be\raa - \tfrac{2}{3}\int_{M}R^{p}\,_{(ij)}\,^{q}\be_{pq}\al^{ij}\,\Om_{n} + \tfrac{1}{3}\laa \al, (\be, \ric)\raa\nonumber\\
& = -\tfrac{2}{3}\laa \sd \al, \sd\be\raa + \tfrac{1}{3}\laa \al \twprod \be, \curv \raa + \tfrac{1}{3}\laa (\al, \be), \ric\raa.\label{pab1}
\end{align}
By the second equality of~\eqref{pab1} combined with~\eqref{sdasttransform},~\eqref{algliepairing}, and~\eqref{twobrackets} of Lemma~\ref{compatibilitylemma},
\begin{align*}
\sOm(\Pi, \shm_{\spl\scal_{\al}, \scal_{\be}\rpl})& = \vr_{\Pi}\spl\scal_{\al}, \scal_{\be}\rpl = \tfrac{1}{3}\laa(\al, \Pi), \sd^{\ast}\be\raa + \tfrac{1}{3}\laa \sd^{\ast}\al, (\be, \Pi)\raa\\
& = \tfrac{1}{3}\laa (\al, \sd^{\ast}\be) + (\sd^{\ast}\al, \be), \Pi\raa = -\tfrac{1}{3}\laa\Pi, \sd^{\ast}(\al, \be) + \shl\al, \be\shr\raa,
\end{align*}
which with Lemma~\ref{alhamlemma} shows~\eqref{hamalbe}.
\end{proof}

The f\/low of $-\scal_{\al}$ has the form $\nabla \to \nabla + s\sd^{\ast}\al$. Lemma~\ref{poissonlemma} can be viewed as showing that $\pi(\al, \nabla) = \al\cdot \nabla = \nabla + s\sd^{\ast}\al$ does not def\/ine an af\/f\/ine action of $(\ssec^{2}(M),(\dum, \dum)) \simeq \lge(\sfr)$ on $\symcon(M, \Om)$, and that the obstruction is given by the Schouten bracket. For $\al, \be \in\ssec^{2}(M)$, using~\eqref{twobracketsidentity} and~\eqref{sdasttransform} yields
\begin{align*}
\al\cdot (\be \cdot \nabla) - \be\cdot(\al \cdot \nabla) &= \tfrac{1}{3}s^{2}\left((\al, \sd^{\ast}\be) + (\sd^{\ast}\al, \be)\right) = \tfrac{1}{3}s^{2}\left(\sd^{\ast}(\al, \be) + \shl\al, \be\shr\right)\\
& = (\al, \be)\cdot \nabla - \nabla + \tfrac{1}{3}s(s-3)\sd^{\ast}(\al, \be) + \tfrac{1}{3}s^{2}\shl\al, \be\shr.
\end{align*}
Hence the closest $\al \cdot \nabla$ comes to def\/ining an af\/f\/ine action is when $s = 3$, in which case
\begin{gather}\label{albeschouten}
\big(\al\cdot (\be \cdot \nabla) - \be\cdot(\al \cdot \nabla)\big)- \big( (\al, \be)\cdot \nabla - \nabla\big) = 3\shl\al, \be\shr.
\end{gather}
The identity~\eqref{albeschouten} is essentially the same as~\eqref{hamalbe}. It yields a conceptual explanation of claim~\eqref{twobrackets} of Lemma~\ref{compatibilitylemma}, that the Schouten bracket is a coboundary with respect to the algebraic bracket $(\dum, \dum)$, as it exhibits $\shl\dum,\dum\shr$ as the obstruction to making $(\ssec^{2}(M),(\dum, \dum))$ act on an af\/f\/ine space. On the other hand, it means that $(\ssec^{2}(M),(\dum, \dum))$ can be extended by $\ssec^{3}(M)$, using $\shl\dum, \dum\shr$ to produce a Lie algebra acting on $\symcon(M, \Om)$.
The idea developed in the remainder of this section and in Section~\ref{symplecticaffinesection} is that, although $\lge(\sfr)$ does not act on $\symcon(M, \Om)$, and so $\fc$ (and correspondingly~$\ric$) cannot be viewed as a moment map on $\symcon(M, \Om)$, the action of $\lge(\sfr)$ can be extended in such a~way that a functional on $\symcon(M, \Om)$ constructed from $\ric$ yields a moment map for the action of the extended Lie algebra on $\symcon(M, \Om)$, and the actions of $\ham(M, \Om)$ and $\lge(\sfr)$ can be combined and further extended in such a way that a functional on $\symcon(M, \Om)$ constructed from $\K$ and $\ric$ yields a moment map for the action of the extended Lie algebra on $\symcon(M, \Om)$.

As a vector space $\ssec^{2}(M)\oplus\ssec^{3}(M)$ is identif\/ied with the quotient $\hssec_{2}(M)/\hssec_{4}(M)$. Since $\hssec_{4}(M)$ is an ideal of the Lie algebra $(\hssec_{2}(M), [\dum, \dum])$ for the Lie bracket $[\dum, \dum]$ def\/ined in~\eqref{compatbracket}, this bracket descends to a Lie bracket, also denoted $[\dum, \dum]$, on $\ssec^{2}(M)\oplus\ssec^{3}(M)$, given explicitly by
\begin{gather}\label{firstextended}
[\al, \be] = (\al_{2}, \be_{2}) + \shl\al_{2}, \be_{2}\shr + (\al_{2}, \be_{3}) - (\be_{2}, \al_{3})
\end{gather}
for $\al, \be \in \ssec^{2}(M)\oplus\ssec^{3}(M)$. For $\al \in\ssec^{2}(M)\oplus\ssec^{3}(M)$ and $\nabla \in \symcon(M, \Om)$, setting
\begin{gather*}
\al\cdot \nabla = \nabla + 3\sd^{\ast}\al_{2} + 3\al_{3},
\end{gather*}
yields a symplectic af\/f\/ine action on $\symcon(M, \Om)$ of the Lie algebra $(\ssec^{2}(M)\oplus\ssec^{3}(M), [\dum, \dum])$, where $[\dum,\dum]$ is the Lie bracket def\/ined by~\eqref{firstextended}. This is a special case of the more general Lemma~\ref{hbseclemma}.

\begin{Lemma}\label{hbseclemma}
Let $(M, \Om)$ be a symplectic manifold. Let $(\hbsec_{1}(M), [\dum,\dum]_{\trun})$ be the Lie algebra defined by~\eqref{hbsecdefined} and~\eqref{truncatedbracket}. The map $\pi^{\hbsec}\colon \hbsec_{1}(M) \times \symcon(M, \Om) \to \symcon(M, \Om)$ defined by
\begin{gather}\label{balnabla}
\pi^{\hbsec}(\al, \nabla) = \al \cdot \nabla = \nabla + \lop_{\nabla}(\al_{1}) + 3\sd^{\ast}_{\nabla}\al_{2} + 3\al_{3},
\end{gather}
for $(\al, \nabla) \in \hbsec_{1}(M) \times \symcon(M, \Om)$, is a symplectic affine action of $(\hbsec_{1}(M), [\dum, \dum]_{\trun})$ on $(\symcon(M, \Om), \sOm)$. The stabilizer of $\nabla \in \symcon(M, \Om)$ is the Lie subalgebra
\begin{gather}\label{bsecstab}
\hbsec_{1}(M)_{\nabla} = \big\{ \al \in \hbsec(M)\colon 3\al_{3} = - \lop_{\nabla}(\al_{1}) - 3\sd^{\ast}_{\nabla}\al_{2}\big\}.
\end{gather}
\end{Lemma}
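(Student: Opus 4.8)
The plan is to apply Lemma~\ref{affineactionlemma}, reducing the assertion that $\pi^{\hbsec}$ is a symplectic affine action to the assertion that the associated map $\rho\colon \hbsec_{1}(M) \to \aff(\symcon(M, \Om))$ given by $\rho(\al)(\nabla) = \pi^{\hbsec}(\al, \nabla) - \nabla = \lop_{\nabla}(\al_{1}) + 3\sd^{\ast}_{\nabla}\al_{2} + 3\al_{3}$ is a symplectic affine representation of $(\hbsec_{1}(M), [\dum, \dum]_{\trun})$. First I would note that each summand of $\rho(\al)(\nabla)$ lies in $\ssec^{3}(M) = T_{\nabla}\symcon(M, \Om)$, so $\pi^{\hbsec}$ indeed maps into $\symcon(M, \Om)$. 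Biaffineness is then straightforward: for fixed $\nabla$ the map $\al \mapsto \rho(\al)(\nabla)$ is linear; for fixed $\al$ the map $\nabla \mapsto \rho(\al)(\nabla)$ is affine, because $\nabla \mapsto \lie_{X}\nabla = \lop_{\nabla}(\al_{1})$ (with $X = \al_{1}^{\ssharp}$ the Hamiltonian field of the exact form $\al_{1}$) is the affine representation of vector fields recalled after Lemma~\ref{affineactionlemma}, and because~\eqref{sdasttransform} exhibits $\nabla \mapsto \sd^{\ast}_{\nabla}\al_{2}$ as affine. Reading off linear parts, and using that the factor $3$ cancels the $\tfrac{1}{3}$ in~\eqref{sdasttransform}, yields
\begin{gather}\label{linrhoplan}
\lin(\rho(\al))\Pi = \lie_{\al_{1}^{\ssharp}}\Pi + (\al_{2}, \Pi), \qquad \Pi \in \ssec^{3}(M).
\end{gather}

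The symplectic condition $\sOm(\lin(\rho(\al))u, v) + \sOm(u, \lin(\rho(\al))v) = 0$ then splits along~\eqref{linrhoplan}: the $\lie_{\al_{1}^{\ssharp}}$ contribution vanishes by the infinitesimal symplectomorphism-invariance of $\sOm$ (valid since $\al_{1}^{\ssharp} \in \symplecto(M, \Om)$), and the $(\al_{2}, \dum)$ contribution vanishes by the invariance~\eqref{algliepairing} of the pairing under the algebraic bracket.

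The core of the proof is the representation identity $\rho([\al, \be]_{\trun}) = \lin(\rho(\al))\rho(\be) - \lin(\rho(\be))\rho(\al)$. I would expand $[\al, \be]_{\trun} = (\pi_{2}\al, \pi_{2}\be) + \shl\al, \be\shr$ into its degree $1$, $2$, and $3$ parts and match them against the antisymmetrization of $\lin(\rho(\al))\rho(\be)$, computed from~\eqref{linrhoplan}, grouping terms by which components $\al_{i}, \be_{j}$ occur. The degree-$1$ term $\lop_{\nabla}(\shl\al_{1}, \be_{1}\shr)$ comes from $\lie_{\al_{1}^{\ssharp}}\lop_{\nabla}(\be_{1}) - \lie_{\be_{1}^{\ssharp}}\lop_{\nabla}(\al_{1}) = \lop_{\nabla}([\al_{1}^{\ssharp}, \be_{1}^{\ssharp}]^{\sflat})$, i.e., the representation property of the vector-field action together with $\shl\al_{1}, \be_{1}\shr = [\al_{1}^{\ssharp}, \be_{1}^{\ssharp}]^{\sflat}$. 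The terms $3\shl\al_{1}, \be_{3}\shr + 3\shl\al_{3}, \be_{1}\shr$ and $3(\al_{2}, \be_{3}) + 3(\al_{3}, \be_{2})$ match directly using $\lie_{X}\ga = \shl X^{\sflat}, \ga\shr$ for symplectic $X$ and the antisymmetries of $\shl\dum, \dum\shr$ and $(\dum, \dum)$. The pure $\al_{2}\be_{2}$ terms $3\sd^{\ast}_{\nabla}(\al_{2}, \be_{2}) + 3\shl\al_{2}, \be_{2}\shr$ on the left are reconciled with $3(\al_{2}, \sd^{\ast}\be_{2}) - 3(\be_{2}, \sd^{\ast}\al_{2})$ on the right by the compatibility identity~\eqref{twobracketsidentity} of Lemma~\ref{compatibilitylemma} and antisymmetry of $(\dum, \dum)$.

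The main obstacle is the mixed $\al_{1}\be_{2}$/$\al_{2}\be_{1}$ block, where $\lie_{\al_{1}^{\ssharp}}$ must be commuted past $\sd^{\ast}_{\nabla}$. Here I would establish the commutation rule
\begin{gather}\label{commplan}
\lie_{X}\sd^{\ast}_{\nabla}\ga - \sd^{\ast}_{\nabla}\lie_{X}\ga = \tfrac{1}{3}(\ga, \lop_{\nabla}(X^{\sflat})), \qquad X \in \symplecto(M, \Om),
\end{gather}
by differentiating the naturality relation $\phi_{t}^{\ast}(\sd^{\ast}_{\nabla}\ga) = \sd^{\ast}_{\phi_{t}^{\ast}\nabla}(\phi_{t}^{\ast}\ga)$ at $t = 0$ along the flow $\phi_{t}$ of $X$, using $\tfrac{d}{dt}\big|_{0}\phi_{t}^{\ast}\nabla = \lop_{\nabla}(X^{\sflat})$ and the exact linearity of $\sd^{\ast}_{\nabla + \Pi}$ in $\Pi$ from~\eqref{sdasttransform}. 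Applying~\eqref{commplan} with $X = \al_{1}^{\ssharp}$, $\ga = \be_{2}$ turns $3\lie_{\al_{1}^{\ssharp}}\sd^{\ast}_{\nabla}\be_{2}$ into $3\sd^{\ast}_{\nabla}\shl\al_{1}, \be_{2}\shr + (\be_{2}, \lop_{\nabla}(\al_{1}))$; after antisymmetrizing, the $\lop_{\nabla}$-valued remainders cancel exactly against the terms $(\al_{2}, \lop_{\nabla}(\be_{1})) - (\be_{2}, \lop_{\nabla}(\al_{1}))$ produced by the $(\al_{2}, \dum)$ part of~\eqref{linrhoplan} acting on the $\lop_{\nabla}(\be_{1})$ component of $\rho(\be)$, leaving precisely $3\sd^{\ast}_{\nabla}(\shl\al_{1}, \be_{2}\shr + \shl\al_{2}, \be_{1}\shr)$, the remaining degree-$2$ part of $\rho([\al, \be]_{\trun})$. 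With the representation identity established, the stabilizer description~\eqref{bsecstab} follows by setting $\rho(\al)(\nabla) = 0$, and it is a Lie subalgebra by the general fact about stabilizers of affine actions noted after Lemma~\ref{affineactionlemma}.
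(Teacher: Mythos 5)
Your proposal is correct and follows essentially the same route as the paper's proof: the same linear part $\lin(\rho(\al))\Pi = \shl\al_{1},\Pi\shr + (\al_{2},\Pi)$, the same use of $\symplecto(M,\Om)$-invariance of $\sOm$ and~\eqref{algliepairing} for symplecticity, and the same three key identities for the bracket check --- the representation property $[\lie_{\al_{1}^{\ssharp}},\lie_{\be_{1}^{\ssharp}}]\nabla = \lop_{\nabla}(\shl\al_{1},\be_{1}\shr)$, the commutator $[\lie_{X},\sd^{\ast}]\ga = \tfrac{1}{3}(\ga,\lop(X^{\sflat}))$ derived by differentiating naturality, and~\eqref{twobracketsidentity} for the $\al_{2}\be_{2}$ block. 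The only difference is presentational: the paper runs the computation with undetermined coefficients $q$, $r$, $s$ and solves for $(1,3,3)$, whereas you verify those coefficients directly.
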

\begin{proof}
Consider the more general map $\hbsec_{1}(M) \times \symcon(M, \Om) \to \symcon(M, \Om)$ def\/ined by
\begin{gather}\label{balnablagen}
\al \cdot \nabla = \nabla + q\lop_{\nabla}(\al_{1}) + r\sd^{\ast}_{\nabla}\al_{2} + s\al_{3},
\end{gather}
for $q, r, s \in \rea$. Let $\rho\colon (\hbsec_{1}(M), [\dum, \dum]_{\trun}) \to \aff(\symcon(M, \Om))$ be the associated map given by $\rho(\al)\nabla = \al \cdot \nabla - \nabla$. From the identities
\begin{gather*}
\sd^{\ast}_{\nabla + \Pi}\al_{2} = \sd^{\ast}_{\nabla}\al_{2} + \tfrac{1}{3}(\al_{2}, \Pi),\\
\lop_{\nabla + \Pi}(\al_{1}) = \lie_{\al_{1}^{\ssharp}}(\nabla + \Pi) = \lie_{\al_{1}^{\ssharp}}\nabla+ \lie_{\al_{1}^{\ssharp}}\Pi = \lop_{\nabla}(\al_{1}) + \shl\al_{1}, \Pi\shr,
\end{gather*}
(the f\/irst is~\eqref{sdasttransform}) it follows that the map $\al \to \lin(\rho(\al))$ associating with $\al$ the linear endomorphism
\begin{gather*}
\lin(\rho(\al))\Pi = q\shl \al_{1}, \Pi\shr + \tfrac{r}{3}(\al_{2}, \Pi),
\end{gather*}
of $\ssec^{3}(M)$, satisf\/ies $\rho(\al)(\nabla + \Pi) - \rho(\al)\nabla = \lin(\rho(\al))\Pi$. This shows that $\rho(\al)$ is an af\/f\/ine transformation of $\symcon(M, \Om)$ with linear part $\lin(\rho(\al))$, and so, by Lemma~\ref{affineactionlemma}, $\al \cdot$ is an af\/f\/ine action with linear part $\lin(\al \cdot) = \Id + \lin(\rho(\al))$. By the $\symplecto(M, \Om)$-equivariance of $\sOm$ and~\eqref{algliepairing}, for $\Pi, \Upsilon \in T_{\nabla}\symcon(M, \Om)$,
\begin{gather*}
\sOm_{\rho(\al)\nabla}(\lin(\rho(\al))\Pi, \Upsilon) + \sOm_{\rho(\al)\nabla}(\Pi, \lin(\rho(\al))\Upsilon)\\
\qquad{}= q\left(\laa \shl \al_{1}, \Pi \shr, \Upsilon\raa + \laa \Pi, \shl \al_{1}, \Upsilon \shr\raa \right) + \tfrac{r}{3}\left(\laa (\al_{2}, \Pi), \Upsilon\raa + \laa \Pi, ( \al_{2}, \Upsilon )\raa \right) = 0.
\end{gather*}
This shows that $\rho(\al)$ is symplectic af\/f\/ine for all $\al \in \hbsec_{1}(M)$. For $\al, \be \in \hbsec_{1}(M)$,
\begin{gather*}
\lin(\rho(\al))\rho(\be)\nabla  = q\shl \al_{1}, \rho(\be)\nabla\shr + \tfrac{r}{3}(\al_{2}, \rho(\be)\nabla)\\
\hphantom{\lin(\rho(\al))\rho(\be)\nabla}{} = q\shl \al_{1}, q\lop_{\nabla}(\be_{1}) + r\sd^{\ast}_{\nabla}\be_{2} + s\be_{3} \shr + \tfrac{r}{3}(\al_{2}, q\lop_{\nabla}(\be_{1}) + r\sd^{\ast}_{\nabla}\be_{2} + s\be_{3})\\
\hphantom{\lin(\rho(\al))\rho(\be)\nabla}{} = q^{2} \shl \al_{1}, \lop_{\nabla}(\be_{1}) \shr
+ qr\left(\shl \al_{1}, \sd^{\ast}_{\nabla}\be_{2} \shr + \tfrac{1}{3}(\al_{2}, \lop_{\nabla}(\be_{1}))\right)\\
\hphantom{\lin(\rho(\al))\rho(\be)\nabla=}{}
+ qs \shl \al_{1}, \be_{3} \shr + \tfrac{r^{2}}{3}(\al_{2}, \sd^{\ast}_{\nabla}\be_{2}) + \tfrac{rs}{3}(\al_{2}, \be_{3}).
\end{gather*}
Hence
\begin{gather}
\lin(\rho(\al))\rho(\be)\nabla - \lin(\rho(\be))\rho(\al)\nabla = q^{2}\big( \shl \al_{1}, \lop_{\nabla}(\be_{1}) \shr + \shl \lop_{\nabla}(\al_{1}), \be_{1} \shr\big)\nonumber\\
{}+ qr\big(\shl \al_{1}, \sd^{\ast}_{\nabla}\be_{2} \shr + \shl \sd^{\ast}_{\nabla}\al_{2}, \be_{1}\shr + \tfrac{1}{3}(\al_{2}, \lop_{\nabla}(\be_{1})) + \tfrac{1}{3}(\lop_{\nabla}(\al_{1}), \be_{2}))\big)\nonumber\\
{}+ qs \big(\shl \al_{1}, \be_{3} \shr + \shl \al_{3}, \be_{1}\shr\big) + \tfrac{r^{2}}{3}\big((\al_{2}, \sd^{\ast}_{\nabla}\be_{2}) + (\sd^{\ast}_{\nabla}\al_{2}, \be_{2})\big) + \tfrac{rs}{3}\big((\al_{2}, \be_{3}) + (\al_{3}, \be_{2})\big).\label{brho1}
\end{gather}
Since $d\al_{1} = 0 = d\be_{1}$, $\al_{1}^{\ssharp}, \be_{1}^{\ssharp} \in \symplecto(M, \Om)$, and
\begin{gather}\label{poisslop}
\shl \al_{1}, \lop_{\nabla}(\be_{1}) \shr + \shl \lop_{\nabla}(\al_{1}), \be_{1} \shr = [\lie_{\al_{1}^{\ssharp}},\lie_{\be_{1}^{\ssharp}}]\nabla = \lop_{\nabla}(\shl \al_{1}, \be_{1}\shr).
\end{gather}
Using~\eqref{poisslop} and~\eqref{twobracketsidentity} to simplify~\eqref{brho1} yields
\begin{gather}
\lin(\rho(\al))\rho(\be)\nabla - \lin(\rho(\be))\rho(\al)\nabla \nonumber\\
{} = q^{2} \lop_{\nabla}(\shl \al_{1}, \be_{1}\shr)
+ qr\big(\lie_{ \al_{1}^{\ssharp}} \sd^{\ast}_{\nabla}\be_{2} - \tfrac{1}{3}(\be_{2}, \lop_{\nabla}(\al_{1})) - \lie_{\be_{1}^{\ssharp}} \sd^{\ast}_{\nabla}\al_{2} + \tfrac{1}{3}(\al_{2}, \lop_{\nabla}(\be_{1})) \big)\nonumber\\
\quad{} + qs \big(\shl \al_{1}, \be_{3} \shr + \shl \al_{3}, \be_{1}\shr\big)+ \tfrac{r^{2}}{3}\big(\shl\al_{2}, \be_{2}\shr + \sd^{\ast}_{\nabla}(\al_{2}, \be_{2})\big) + \tfrac{rs}{3}\big((\al_{2}, \be_{3}) + (\al_{3}, \be_{2})\big).\label{brho2}
\end{gather}
By~\eqref{sdasttransform}, there holds $\sd^{\ast}_{\phi_{t}^{\ast}(\nabla)}\ga - \sd^{\ast}_{\nabla}\ga = \tfrac{1}{3}(\ga, \phi_{t}^{\ast}\nabla - \nabla)$ for $\ga \in \ssec^{k}(M)$ and the local f\/low $\phi_{t}$ generated by $Z \in \symplecto(M, \Om)$. Since $\sd^{\ast}_{\phi_{t}^{\ast}(\nabla)}\ga = \phi_{t}^{\ast}(\sd^{\ast}_{\nabla} \phi_{-t}^{\ast}(\ga))$, dif\/ferentiating at $t = 0$ yields
\begin{gather}\label{liexsdast}
[\lie_{Z}, \sd^{\ast}]\ga = (\lie_{Z}\sd^{\ast})\ga = \tfrac{1}{3}(\ga, \lop(Z^{\sflat})).
\end{gather}
Applying~\eqref{liexsdast} to simplify~\eqref{brho2} yields
\begin{gather*}
\lin(\rho(\al)) \rho(\be)\nabla - \lin(\rho(\be))\rho(\al)\nabla
= q^{2} \lop_{\nabla}(\shl \al_{1}, \be_{1}\shr) + r\sd^{\ast}_{\nabla}\big( q\big( \shl \al_{1}, \be_{2} \shr + \shl \al_{2}, \be_{1} \shr\big) + \tfrac{r}{3} (\al_{2}, \be_{2})\big)\!\\
\qquad{} + qs \big(\shl \al_{1}, \be_{3} \shr + \shl \al_{3}, \be_{1}\shr\big)+ \tfrac{r^{2}}{3}\shl\al_{2}, \be_{2}\shr + \tfrac{rs}{3}\big((\al_{2}, \be_{3}) + (\al_{3}, \be_{2})\big).
\end{gather*}
Hence
\begin{gather*}
\big(\lin(\rho(\al))\rho(\be) - \lin(\rho(\be))\rho(\al) - \rho([\al, \be]_{\trun}\big)\nabla \\
{} = q(q-1) \lop_{\nabla}(\shl \al_{1}, \be_{1}\shr) + r\sd^{\ast}_{\nabla}\big( (q- 1)\left( \shl \al_{1}, \be_{2} \shr + \shl \al_{2}, \be_{1} \shr\right) + \tfrac{r-3}{3} (\al_{2}, \be_{2})\big)\\
\quad{}+ (q - 1)s \big(\shl \al_{1}, \be_{3} \shr + \shl \al_{3}, \be_{1}\shr\big)+ (\tfrac{r^{2}}{3} - s)\shl\al_{2}, \be_{2}\shr
+ \tfrac{s(r - 3)}{3}\big((\al_{2}, \be_{3}) + (\al_{3}, \be_{2})\big).
\end{gather*}
This vanishes if $q = 1$ and $r = 3 = s$. This yields~\eqref{balnabla} and shows that~\eqref{balnabla} def\/ines a symplectic af\/f\/ine action. That the stabilizer of $\nabla \in \symcon(M, \Om)$ has the form~\eqref{bsecstab} is straightforward.
\end{proof}

\begin{Corollary}\label{halnablacorollary}
Let $(M, \Om)$ be a symplectic manifold. Let $(\hhamsec(M), [\dum,\dum])$ be the Lie algebra defined by~\eqref{compatbracket} and~\eqref{hamsecdefined}. The map $\pi^{\hhamsec}\colon \hhamsec(M) \times \symcon(M, \Om) \to \symcon(M, \Om)$ defined by
\begin{gather}\label{halnabla}
\pi^{\hhamsec}(\al, \nabla) = \al \cdot \nabla = \nabla + \lop_{\nabla}(\al_{1}) + 3\sd^{\ast}_{\nabla}\al_{2} + 3\al_{3}= \nabla - \hop_{\nabla}(\al_{0}) + 3\sd^{\ast}_{\nabla}\al_{2} + 3\al_{3},
\end{gather}
for $\al \in \hhamsec(M)$ and $\nabla \in \symcon(M, \Om)$, is a symplectic affine action of $(\hhamsec(M), [\dum, \dum])$ on $(\symcon(M, \Om), \sOm)$. The stabilizer of $\nabla \in \symcon(M, \Om)$ is the Lie subalgebra
\begin{gather*}
\hhamsec(M)_{\nabla} = \big\{ \al \in \hhamsec(M)\colon 3\al_{3} = - \lop_{\nabla}(\al_{1}) - 3\sd^{\ast}_{\nabla}\al_{2}\big\}.
\end{gather*}
\end{Corollary}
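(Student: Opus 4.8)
The plan is to deduce this from Lemma~\ref{hbseclemma} by realizing $\pi^{\hhamsec}$ as the pullback of the action $\pi^{\hbsec}$ along the central projection $\pi_1$ of Lemma~\ref{hamseccentrallemma}. The point is that $\al\cdot\nabla$ depends on $\al$ only through $\pi_1\al$, so the $\hhamsec(M)$-action factors through the already-constructed $\hbsec_1(M)$-action.

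First I would verify a factorization identity. For $\al \in \hhamsec(M)$ the defining relation $\al_1 = -\sd^{\ast}\al_0$ of~\eqref{hamsecdefined} shows $\al_1 = d\al_0$ is exact, so $\pi_1\al = \al - \al_0$ lies in $\hbsec_1(M)$ by~\eqref{hbsecdefined}; and since $\hop_\nabla(f) = \lop_\nabla(\sd^{\ast}f)$, the same relation gives $\lop_\nabla(\al_1) = -\lop_\nabla(\sd^{\ast}\al_0) = -\hop_\nabla(\al_0)$, which is precisely the identity reconciling the two expressions for $\al\cdot\nabla$ in~\eqref{halnabla}. Because $(\pi_1\al)_i = \al_i$ for $i \geq 1$, comparing~\eqref{halnabla} with~\eqref{balnabla} yields
\[
\pi^{\hhamsec}(\al, \nabla) = \pi^{\hbsec}(\pi_1\al, \nabla), \qquad \al \in \hhamsec(M), \quad \nabla \in \symcon(M, \Om).
\]
In particular the central kernel $\iota(\rea)$ of $\pi_1$ acts trivially, since a constant $\al_0$ has $\al_1 = d\al_0 = 0$ and no higher-degree terms.

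With the factorization in hand, I would transfer the defining properties of an affine action. Biaffineness of $\pi^{\hhamsec}$ and $\pi^{\hhamsec}(0, \nabla) = \nabla$ follow from the corresponding statements for $\pi^{\hbsec}$ in Lemma~\ref{hbseclemma} together with the linearity of $\pi_1$, and the linear part of $\al\cdot$ coincides with that of $(\pi_1\al)\cdot$, hence is symplectic. For the action identity I would use that, by Lemma~\ref{hamseccentrallemma}, $\pi_1$ is a Lie algebra homomorphism from $(\hhamsec(M), [\dum,\dum])$ to $(\hbsec_1(M), [\dum,\dum]_{\trun})$: writing $\circ$ for $\pi^{\hbsec}$ and using the action identity of Lemma~\ref{hbseclemma},
\begin{align*}
\al\cdot(\be\cdot\nabla) - \be\cdot(\al\cdot\nabla) &= (\pi_1\al)\circ\big((\pi_1\be)\circ\nabla\big) - (\pi_1\be)\circ\big((\pi_1\al)\circ\nabla\big)\\
&= [\pi_1\al, \pi_1\be]_{\trun}\circ\nabla - \nabla = \pi_1([\al,\be])\circ\nabla - \nabla = [\al,\be]\cdot\nabla - \nabla,
\end{align*}
which is the required identity, so by Lemma~\ref{affineactionlemma} the map $\pi^{\hhamsec}$ is a symplectic affine action. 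The stabilizer is read off directly from~\eqref{halnabla}: $\al\cdot\nabla = \nabla$ exactly when $3\al_3 = -\lop_\nabla(\al_1) - 3\sd^{\ast}_{\nabla}\al_2$. No step presents a genuine obstacle; the only point requiring care is the factorization itself, namely confirming that $\al_0$ enters $\al\cdot\nabla$ only through $\al_1$, so that the action truly descends through the central projection rather than merely being compatible with it.
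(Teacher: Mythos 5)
Your proposal is correct and follows essentially the same route as the paper: the paper's proof is precisely the observation that $\pi^{\hhamsec} = \pi^{\hbsec}\circ(\pi_{1}\times\Id)$ and then cites Lemmas~\ref{hamseccentrallemma} and~\ref{hbseclemma}. You have simply spelled out the details the paper leaves implicit (the identity $\lop_{\nabla}(\al_{1}) = -\hop_{\nabla}(\al_{0})$, the triviality of the action of $\iota(\rea)$, and the transfer of the action identity through the homomorphism $\pi_{1}$), all of which are accurate.
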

\begin{proof}
By def\/inition $\pi^{\hhamsec} = \pi^{\hbsec} \circ (\pi_{1}\times \Id_{\symcon(M, \nabla)})$ where $\pi^{\hbsec}$ is def\/ined by~\eqref{balnabla}. By Lemmas~\ref{hamseccentrallemma} and~\ref{hbseclemma}, $\pi^{\hhamsec}$ is a symplectic af\/f\/ine action.
\end{proof}

\begin{Remark}
Consider the more general map $\hhamsec(M) \times \symcon(M, \Om) \to \symcon(M, \Om)$ def\/ined by
\begin{gather}\label{halnablagen}
\al \cdot \nabla = \nabla + p \hop_{\nabla}(\al_{0}) + q\lop_{\nabla}(\al_{1}) + r\sd^{\ast}_{\nabla}\al_{2} + s\al_{3},
\end{gather}
for $p, q, r, s \in \rea$. As in the proof of Lemma~\ref{hbseclemma}, it is straightforward to show that the associated map $\rho\colon (\hhamsec(M), [\dum, \dum]) \to \aff(\symcon(M, \Om))$ given by $\rho(\al)\nabla = \al \cdot \nabla - \nabla$ is a symplectic af\/f\/ine transformation of $\symcon(M, \Om)$ with linear part
\begin{gather*}
\lin(\rho(\al))\Pi = p \shl \sd^{\ast}_{\nabla}\al_{0}, \Pi\shr + q\shl \al_{1}, \Pi\shr + \tfrac{r}{3}(\al_{2}, \Pi)= (q - p)\shl \al_{1}, \Pi\shr + \tfrac{r}{3}(\al_{2}, \Pi).
\end{gather*}
Arguing as in the proof of Lemma~\ref{hbseclemma} shows directly that~\eqref{halnablagen} def\/ines a symplectic af\/f\/ine action if $q = p + 1$ and $r = 3 = s$. In this case,~\eqref{halnablagen} has the form~\eqref{halnabla}, whatever is the value of~$p$.
\end{Remark}

\section[Hamiltonian action of the extended Lie algebra]{Hamiltonian action of the extended Lie algebra\\ on $\boldsymbol{(\symcon(M, \Om),\sOm)}$}\label{symplecticaffinesection}

In this section $M$ is supposed compact. This is needed only to guarantee convergence of the integrals that appear. With appropriate qualif\/ications, everything extends to noncompact $M$, but this requires fussing that would be distracting here.

\subsection{}\label{maintheoremsection}
A reference connection $\nabla_{0} \in \symcon(M, \Om)$ and a f\/ixed $\Pi \in T_{\nabla_{0}}\symcon(M, \Om)$ determine a function $\tf_{\nabla_{0}, \Pi}(\nabla) = \laa\Pi, \nabla - \nabla_{0}\raa$ on $\symcon(M, \Om)$. Since $\vr_{\Upsilon}\tf_{\nabla_{0}, \Pi}(\nabla) = \laa \Pi, \Upsilon \raa$, the Hamiltonian vector f\/ield $\shm_{\tf(\nabla_{0}, \Pi)}= -\Pi$ is that generating the f\/low on $\symcon(M, \Om)$ given by translation by $-t\Pi$.

Given $p, q, r \in \rea$, $\nabla_{0} \in \symcon(M, \Om)$, and $\al \in \hhamsec(M)$ def\/ine $\emom^{p,q,r}_{\al}\colon \symcon(M, \Om) \to \rea$ by
\begin{gather*}
\emom^{p,q,r}_{\al}(\nabla)= p\laa \al_{0}, \K(\nabla) \raa + q\scal_{\al_{2}}(\nabla) + r\tf_{\nabla_{0}, \al_{3}}(\nabla).
\end{gather*}
The dependence of $\emom^{p, q, r}_{\al}$ on the reference connection $\nabla_{0}$ is not indicated so that the notation does not become too cluttered.
By Theorem~\ref{momentmaptheorem}, Lemma~\ref{alhamlemma}, and the preceding paragraph, the Hamiltonian vector f\/ield $\shm_{\emom^{p,q,r}_{\al}}$ generated on $\symcon(M, \Om)$ by $\emom^{p,q,r}_{\al}$ is
\begin{gather*}
\shm_{\emom^{p,q,r}_{\al}} = -p\hop(\al_{0}) - q\sd^{\ast}\al_{2} - r\al_{3}.
\end{gather*}
In the case $p = -1$, $q = 3$, and $r = 3$, there is written simply $\emom$ for $\emom^{-1, 3, 3}$. Lemma~\ref{emombracketslemma} computes the Poisson brackets $\spl \emom_{\al}, \emom_{\be}\rpl$. The following preliminary lemma is needed in its proof.

\begin{Lemma}\label{albecurvlemma}
Let $(M, \Om)$ be a compact symplectic manifold. For $\nabla = \nabla_{0} + \Pi_{ij}\,^{k},\nabla_{0} \in \symcon(M, \Om)$, $\al, \be \in \ssec^{2}(M)$, and $\al \twprod \be$ as defined in Lemma~{\rm \ref{poissonlemma}},
\begin{gather}\label{albecurv}
\laa \al \twprod \be, \curv(\nabla) \raa - 2\laa \sd_{\nabla}\al, \sd_{\nabla}\be \raa
= \laa \al \twprod \be, \curv(\nabla_{0}) \raa - 2\laa \sd_{\nabla_{0}}\al, \sd_{\nabla_{0}}\be \raa + \laa\shl \al, \be \shr, \Pi \raa.
\end{gather}
\end{Lemma}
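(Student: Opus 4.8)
The plan is to recognize the left-hand side of~\eqref{albecurv} as the first difference of an \emph{affine} function on $\symcon(M, \Om)$ and to identify its linear part with the pairing against $\shl\al, \be\shr$, rather than to expand everything by hand. Write $F\colon \symcon(M, \Om) \to \rea$ for the function $F(\nabla) = \laa \al \twprod \be, \curv(\nabla)\raa - 2\laa \sd_{\nabla}\al, \sd_{\nabla}\be\raa$ whose values at $\nabla$ and $\nabla_{0}$ appear on the two sides of~\eqref{albecurv}. The entire claim amounts to showing that $F$ is affine on $\symcon(M, \Om)$ with first variation $\vr_{\Pi}F = \laa \shl\al, \be\shr, \Pi\raa$; integrating this along the segment $t \mapsto \nabla_{0} + t\Pi$ then gives $F(\nabla) - F(\nabla_{0}) = \laa \shl\al, \be\shr, \Pi\raa$, which is exactly~\eqref{albecurv} since $\nabla = \nabla_{0} + \Pi$.

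The key observation is that $F$ has already been computed, up to a Ricci term, in Lemma~\ref{poissonlemma}. Indeed, the formula there for the Poisson bracket $\spl\scal_{\al}, \scal_{\be}\rpl$ rearranges to
\begin{gather*}
F(\nabla) = 3\,\spl \scal_{\al}, \scal_{\be}\rpl(\nabla) - \scal_{(\al, \be)}(\nabla),
\end{gather*}
so that $\shm_{F} = 3\,\shm_{\spl \scal_{\al}, \scal_{\be}\rpl} - \shm_{\scal_{(\al, \be)}}$. I would then invoke the Hamiltonian vector field~\eqref{hamalbe} of the same lemma, namely $3\,\shm_{\spl \scal_{\al}, \scal_{\be}\rpl} = \shm_{\scal_{(\al, \be)}} - \shl\al, \be\shr$, whence the $\nabla$-dependent terms cancel and
\begin{gather*}
\shm_{F} = \big(\shm_{\scal_{(\al, \be)}} - \shl\al, \be\shr\big) - \shm_{\scal_{(\al, \be)}} = -\shl\al, \be\shr.
\end{gather*}
Since the Schouten bracket $\shl\al, \be\shr \in \ssec^{3}(M)$ is independent of the connection, $\shm_{F}$ is a \emph{constant} vector field; as $\sOm$ is translation invariant, $\vr_{\Pi}F = \sOm_{\nabla}(\Pi, \shm_{F}) = \laa \Pi, -\shl\al, \be\shr\raa = \laa \shl\al, \be\shr, \Pi\raa$, the last equality using the graded antisymmetry of $\laa \dum, \dum\raa$ on $\ssec^{3}(M)$. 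This variation is independent of the base point, so $F$ is affine with exactly the asserted derivative, and the lemma follows.

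The point I would stress is that the genuine content of the lemma is the \emph{cancellation of the terms quadratic in $\Pi$}. Expanded by brute force, the variation~\eqref{riemvar} of the curvature contributes a $\Pi \wedge \Pi$ term to $\laa \al \twprod \be, \curv\raa$, and the change of $\sd_{\nabla}\al$ under $\nabla \mapsto \nabla + \Pi$ contributes a term bilinear in $\Pi$ to $\laa \sd_{\nabla}\al, \sd_{\nabla}\be\raa$, so a priori $F$ is quadratic in $\Pi$. The main obstacle in a direct calculation would be verifying that these two quadratic contributions cancel and that the surviving linear term is precisely $\laa \shl\al, \be\shr, \Pi\raa$; the route above circumvents this entirely, because the constancy of $\shm_{F}$ forces the quadratic part of $F$ to vanish automatically. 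The only real care needed is with signs, in particular the antisymmetry of $\laa \dum, \dum\raa$ on odd-degree symmetric tensors and the conventions relating $\vr_{\Pi}$ to $\sOm$ and $\shm$.
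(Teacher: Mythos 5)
Your argument is correct and non-circular, but it takes a genuinely different route from the paper. The paper proves~\eqref{albecurv} by direct computation: it expands $\curv(\nabla) = \curv(\nabla_{0}) + d_{\nabla_{0}}\Pi + \Pi\wedge\Pi$ via~\eqref{riemvar}, integrates by parts to extract the Schouten bracket from the $d_{\nabla_{0}}\Pi$ term, computes how $\sd_{\nabla}$ transforms under $\nabla_{0}\mapsto\nabla_{0}+\Pi$, and then checks explicitly that the cross-terms and the two contributions quadratic in $\Pi$ cancel, the latter via the pointwise algebraic identity $\al^{ab}\be^{cd}\Pi_{acp}\Pi_{bd}{}^{p}=0$. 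You instead observe that $F(\nabla)=\laa\al\twprod\be,\curv(\nabla)\raa-2\laa\sd_{\nabla}\al,\sd_{\nabla}\be\raa$ equals $3\spl\scal_{\al},\scal_{\be}\rpl-\scal_{(\al,\be)}$ by Lemma~\ref{poissonlemma}, so that by~\eqref{hamalbe} its Hamiltonian vector field is the connection-independent tensor $-\shl\al,\be\shr$, whence $F$ is affine with the asserted linear part; I verified the sign bookkeeping ($\vr_{\Pi}F=\sOm_{\nabla}(\Pi,\shm_{F})$ and the antisymmetry of $\laa\dum,\dum\raa$ in degree three) and it is consistent with the paper's conventions. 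What your route buys is that the quadratic cancellation comes for free from the constancy of $\shm_{F}$; what it costs is that the hard analytic content is outsourced to Lemma~\ref{poissonlemma}, whose proof rests on the Ricci-identity computation of Lemma~\ref{sdcommutationlemma} and on~\eqref{sdasttransform}, whereas the paper's proof of Lemma~\ref{albecurvlemma} is self-contained modulo~\eqref{riemvar} and~\eqref{schouten}. Since Lemma~\ref{poissonlemma} is established before and independently of Lemma~\ref{albecurvlemma} (the latter is used only later, in Lemma~\ref{emombracketslemma}), there is no circularity, and your proof is a legitimate, arguably more conceptual, alternative.
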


\begin{proof}
For $\al, \be \in \ssec^{2}(M)$, $\curv(\nabla) = \curv(\nabla_{0}) + d_{\nabla_{0}}\Pi_{ijkl} + 2\Pi_{pl[i}\Pi_{j]k}\,^{p}$, by~\eqref{riemvar}, so
\begin{gather*}
\laa \al \twprod \be, \curv(\nabla) \raa - \laa \al \twprod \be, \curv(\nabla_{0}) \raa = \laa \al \twprod \be, d_{\nabla_{0}}\Pi + \Pi \wedge \Pi \raa.
\end{gather*}
Integrating by parts and using $(\sd_{\nabla_{0}}\al, \Pi) = -3(\sd_{\nabla_{0}}\al)^{p}\Pi_{ijp}$ and~\eqref{schouten} yields
\begin{gather}\label{albedn}
 \laa \al \twprod \be, d_{\nabla_{0}}\Pi \raa = -\tfrac{2}{3}\laa (\sd_{\nabla_{0}}\al, \Pi), \be\raa + \tfrac{2}{3}\laa \al, (\sd_{\nabla_{0}}\be, \Pi)\raa+ \laa \shl\al, \be\shr, \Pi\raa.
\end{gather}
For $\al, \be \in \ssec^{2}(M)$,
\begin{gather}\label{albepipi}
\al^{ab}\be^{cd}\Pi_{acp}\Pi_{bd}\,^{p} = \al^{ba}\be_{dc}\Pi_{acp}\Pi_{bd}\,^{p} = \al^{ab}\be_{cd}\Pi_{bdp}\Pi_{ac}\,^{p} = - \al^{ab}\be^{cd}\Pi_{acp}\Pi_{bd}\,^{p}
\end{gather}
so that $\al^{ab}\be^{cd}\Pi_{acp}\Pi_{bd}\,^{p} = 0$. By~\eqref{albepipi},
\begin{gather}\label{albepipi2}
\laa \al \twprod \be, \Pi \wedge \Pi \raa = \tfrac{1}{2}\int_{M}(\al \twprod \be)_{ijkl}(\Pi \wedge \Pi)^{ijkl} \, \Om_{n} = 2\int\al^{ab}\Pi_{abp}\be^{cd}\Pi_{cd}\,^{p}\,\Om_{n}.
\end{gather}
From
\begin{gather*}
\dn\al_{[ij]i_{1}\dots i_{k-1}}= d_{\nabla_{0}}\al_{[ij]i_{1}\dots, i_{k-1}}
-(k-1)\big(\Pi_{i(i_{1}}\,^{p}\al_{i_{2}\dots i_{k-1})jp} - \Pi_{j(i_{1}}\,^{p}\al_{i_{2}\dots i_{k-1})ip}\big),
\end{gather*}
there follows
\begin{gather}\label{sdtransform}
\sd_{\nabla}\al_{i_{1}\dots i_{k-1}} = \sd_{\nabla_{0}}\al_{i_{1}\dots i_{k-1}} + (-1)^{k-1}(k-1)\Pi^{pq}\,_{(i_{1}}\al_{i_{2}\dots i_{k-1})pq}.
\end{gather}
Computing using~\eqref{sdtransform}, $(\sd_{\nabla_{0}}\al, \Pi) = -3(\sd_{\nabla_{0}}\al)^{p}\Pi_{ijp}$, and~\eqref{albepipi2} yields
\begin{gather}
\laa \sd_{\nabla}\al, \sd_{\nabla}\be \raa - \laa \sd_{\nabla_{0}}\al, \sd_{\nabla_{0}}\be\raa\nonumber\\
\qquad{}= -\tfrac{1}{3}\laa (\sd_{\nabla_{0}}\al, \Pi), \be\raa + \tfrac{1}{3}\laa \al, (\sd_{\nabla_{0}}\be, \Pi)\raa+ \tfrac{1}{2}\laa \al\twprod \be, \Pi \wedge \Pi\raa.\label{albesdsd}
\end{gather}
Combining~\eqref{albedn} and~\eqref{albesdsd} yields~\eqref{albecurv}.
\end{proof}

\begin{Lemma}\label{emombracketslemma}
Let $(M, \Om)$ be a compact symplectic manifold and fix $\nabla_{0} \in \symcon(M, \Om)$. Define a~skew-symmetric map $\mcoc_{\nabla_{0}}\colon \hhamsec(M) \times \hhamsec(M) \to \rea$ by
\begin{gather}
\mcoc_{\nabla_{0}}(\al, \be)  = 3 \laa \al_{2}\twprod \be_{2}, \curv(\nabla_{0}) \raa - 6\laa \sd_{\nabla_{0}} \al_{2}, \sd_{\nabla_{0}}\be_{2}\raa
\nonumber\\
\hphantom{\mcoc_{\nabla_{0}}(\al, \be)  =}{}
 - 3\sOm( \shm_{\emom_{\al}}(\nabla_{0}), \be_{3}) - 3\sOm( \al_{3}, \shm_{\emom_{\be}}(\nabla_{0}))- 9 \laa \al_{3}, \be_{3}\raa.\label{mcocdefined}
\end{gather}
For $\al, \be \in \hhamsec(M)$, the Poisson brackets on $(\symcon(M, \Om), \sOm)$ of $\emom_{\al}$ and $\emom_{\be}$ satisfy
\begin{gather*}
\spl \emom_{\al}, \emom_{\be}\rpl - \emom_{[\al, \be]} = \mcoc_{\nabla_{0}}(\al, \be),
\end{gather*}
where $\mcoc_{\nabla_{0}}(\al, \be)$ is regarded as a constant function on $\symcon(M, \Om)$.
\end{Lemma}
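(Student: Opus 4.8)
The plan is to expand both sides into the graded components of $\al$ and $\be$ and match them. Writing $\shm_{\emom_{\al}} = \hop(\al_0) - 3\sd^{\ast}\al_2 - 3\al_3$ and using $\spl\emom_{\al},\emom_{\be}\rpl = \sOm(\shm_{\emom_{\al}},\shm_{\emom_{\be}})$ together with the fact that $\sOm$ is parallel, so $\sOm_{\nabla} = \laa\dum,\dum\raa$ independently of $\nabla$, I would expand the Poisson bracket into nine bilinear pairings. In parallel I would expand $\emom_{[\al,\be]}$ using the components $[\al,\be]_0 = -\{\al_0,\be_0\}$ from \eqref{hamsec1} and $[\al,\be]_2 = (\al_2,\be_2)+\shl\al_1,\be_2\shr+\shl\al_2,\be_1\shr$, $[\al,\be]_3 = (\al_2,\be_3)+(\al_3,\be_2)+\shl\al_1,\be_3\shr+\shl\al_2,\be_2\shr+\shl\al_3,\be_1\shr$ from \eqref{hamsec3}. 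The goal is to show that the nine pairings reassemble into the $\K$-, $\scal_{\dum}$-, and $\tf_{\nabla_0,\dum}$-pieces of $\emom_{[\al,\be]}$, plus the constant $\mcoc_{\nabla_0}(\al,\be)$.

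The two easiest pieces come first. The pairing $\sOm(\hop(\al_0),\hop(\be_0))$ equals $\laa\{\al_0,\be_0\},\K(\nabla)\raa$ by the equivariance of $\K$ (Theorem~\ref{momentmaptheorem}), which is exactly the $\K$-piece of $\emom_{[\al,\be]}$. For the pairing $9\laa\sd^{\ast}\al_2,\sd^{\ast}\be_2\raa$ I would apply Lemma~\ref{poissonlemma} to rewrite it as $-6\laa\sd\al_2,\sd\be_2\raa + 3\laa\al_2\twprod\be_2,\curv(\nabla)\raa + 3\scal_{(\al_2,\be_2)}(\nabla)$, and then use Lemma~\ref{albecurvlemma} to trade the $\nabla$-dependent curvature and divergence terms for their $\nabla_0$-values plus $3\laa\shl\al_2,\be_2\shr,\Pi\raa$, where $\Pi = \nabla-\nabla_0$. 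This step produces precisely the first two terms of $\mcoc_{\nabla_0}$ (that is, $3\laa\al_2\twprod\be_2,\curv(\nabla_0)\raa-6\laa\sd_{\nabla_0}\al_2,\sd_{\nabla_0}\be_2\raa$), together with the $(\al_2,\be_2)$-part of the $\scal$-piece and the $\shl\al_2,\be_2\shr$-part of the $\tf$-piece.

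The two cross pairings $-3\sOm(\hop(\al_0),\sd^{\ast}\be_2)$ and $-3\sOm(\sd^{\ast}\al_2,\hop(\be_0))$ I would handle by moving the divergence across the pairing and invoking $\sd\hop(\al_0) = \lie_{\hm_{\al_0}}\ric = \shl\sd^{\ast}\al_0,\ric\shr = -\shl\al_1,\ric\shr$ from \eqref{tracedhf} (using $\al_1 = -\sd^{\ast}\al_0$); the $\symplecto(M,\Om)$-invariance of $\laa\dum,\dum\raa$, valid because $\al_1$ and $\be_1$ are closed, then converts them into $3\scal_{\shl\al_1,\be_2\shr}(\nabla)+3\scal_{\shl\al_2,\be_1\shr}(\nabla)$, completing the $\scal_{[\al,\be]_2}$-piece. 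The delicate part is the five pairings involving $\al_3$ or $\be_3$. Here the key observation is that the last three terms of $\mcoc_{\nabla_0}$ in \eqref{mcocdefined}, after substituting $\shm_{\emom_{\al}}(\nabla_0) = \hop_{\nabla_0}(\al_0)-3\sd^{\ast}_{\nabla_0}\al_2-3\al_3$, unpack into exactly the $\nabla_0$-evaluations of those same five pairings. Consequently the difference between the $\nabla$- and $\nabla_0$-evaluations must reproduce the remaining $\tf$-terms; I would verify this using \eqref{sdasttransform} in the form $\sd^{\ast}_{\nabla}\al_2-\sd^{\ast}_{\nabla_0}\al_2 = \tfrac13(\al_2,\Pi)$ and the variation $\hop_{\nabla}(\al_0)-\hop_{\nabla_0}(\al_0) = -\shl\al_1,\Pi\shr$ (from $\hop(\al_0)=-\lop(\al_1)$ and the formula for $\lop_{\nabla+\Pi}$ in Lemma~\ref{hbseclemma}), together with the invariance identity \eqref{algliepairing} and the antisymmetry of $(\dum,\dum)$ and $\shl\dum,\dum\shr$, to obtain $3\laa(\al_2,\be_3)+(\al_3,\be_2)+\shl\al_1,\be_3\shr+\shl\al_3,\be_1\shr,\Pi\raa$.

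Collecting everything, the $\K$-, $\scal_{\dum}$-, and $\tf_{\nabla_0,\dum}$-contributions sum to $\emom_{[\al,\be]}$ (the $\tf$-piece being $3\laa[\al,\be]_3,\Pi\raa$, assembled from the $\shl\al_2,\be_2\shr$-term of the divergence step and the four terms of the $\al_3$-step), while the leftover $\nabla$-independent quantities are exactly $\mcoc_{\nabla_0}(\al,\be)$; this proves the identity. I expect the main obstacle to be the bookkeeping in the third paragraph: correctly tracking the signs arising from the graded antisymmetry of $\laa\dum,\dum\raa$ on degree-three tensors, from the antisymmetry of the algebraic and Schouten brackets, and from the invariance relations, and in particular recognizing that the three "mixed" terms of $\mcoc_{\nabla_0}$ are precisely the device absorbing the $\nabla_0$- rather than $\nabla$-evaluation of the translational couplings, so that all $\nabla$-dependence beyond $\emom_{[\al,\be]}$ cancels.
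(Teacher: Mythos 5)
Your proposal is correct and follows essentially the same route as the paper's proof: expand $\sOm(\shm_{\emom_{\al}},\shm_{\emom_{\be}})$ into the bilinear pairings of the components, treat the $\hop$--$\hop$ term via the equivariance identity for $\K$, the $\hop$--$\sd^{\ast}$ cross terms via $\sd\hop(f)=\lie_{\hm_{f}}\ric$, the $\sd^{\ast}\al_{2}$--$\sd^{\ast}\be_{2}$ term via Lemmas~\ref{poissonlemma} and~\ref{albecurvlemma}, and the $\al_{3}$/$\be_{3}$ terms via the transformation formulas \eqref{sdasttransform} and $\lop_{\nabla+\Pi}(\al_{1})=\lop_{\nabla}(\al_{1})+\shl\al_{1},\Pi\shr$, with the residual $\nabla_{0}$-evaluations assembling into $\mcoc_{\nabla_{0}}$. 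Your observation that the last three terms of \eqref{mcocdefined} unpack into exactly the $\nabla_{0}$-evaluations of the five pairings involving $\al_{3}$ or $\be_{3}$ is correct and is precisely the bookkeeping the paper performs.
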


\begin{proof}
First, there are made some general observations needed later in the proof. Dif\/fe\-ren\-tia\-ting $\K(\phi_{t}^{\ast}\nabla) = \K(\nabla) \circ \phi_{t}$ along the f\/low $\phi_{t}$ of $X \in \symplecto(M, \Om)$, and using~\eqref{kvary} yields
\begin{gather}\label{isotropyidentity}
\hop_{\nabla}^{\ast}\hop_{\nabla}(f) = \{f, \K(\nabla)\},\qquad
\laa \hop_{\nabla}(f), \hop_{\nabla}(g)\raa = \laa f, \{g, \K(\nabla)\}\raa = \laa \{f, g\}, \K(\nabla)\raa ,
\end{gather}
for $f, g, \in \cinf(M)$. Similarly, for $f \in \cinf(M)$ and $\al \in \ssec^{2}(M)$, by~\eqref{tracedhf},
\begin{align}
\laa \hop_{\nabla}(f), \sd^{\ast}_{\nabla}\al\raa &= - \laa \sd_{\nabla}\hop_{\nabla}(f), \al\raa = - \laa \lie_{\hm_{f}}\ric(\nabla), \al\raa\nonumber\\
& = \laa \ric(\nabla), \lie_{\hm_{f}}\al \raa = \laa \ric(\nabla), \shl \sd^{\ast}f, \al \shr\raa = \scal_{\shl \sd^{\ast}f, \al \shr}(\nabla).\label{ii2}
\end{align}
By def\/inition of $\hop_{\nabla}$, for $f \in \cinf(M)$ and $\al \in \ssec^{3}(M)$,
\begin{gather}\label{ii3}
\laa \hop_{\nabla}(f), \al\raa =\laa \hop_{\nabla_{0}}(f), \al\raa + \laa \shl \sd^{\ast}f, \al\shr, \nabla - \nabla_{0}\raa.
\end{gather}
By~\eqref{sdasttransform}, for $\al_{2}, \be_{2} \in \ssec^{2}(M)$ and $\al_{3}, \be_{3} \in \ssec^{3}(M)$,
\begin{gather}
\laa \sd_{\nabla}^{\ast}\al_{2}, \be_{3} \raa + \laa \al_{3}, \sd_{\nabla}^{\ast}\be_{2}\raa\nonumber\\
 \qquad{} = \laa \sd_{\nabla_{0}}^{\ast}\al_{2}, \be_{3} \raa + \laa \al_{3}, \sd_{\nabla_{0}}^{\ast}\be_{2}\raa + \tfrac{1}{3}\laa (\al_{2}, \be_{3}) + (\al_{3}, \be_{2}), \nabla - \nabla_{0}\raa.\label{albe23}
\end{gather}
For $\al, \be \in \hhamsec(M)$,
\begin{gather}
\spl \emom_{\al}^{p, q, r}, \emom_{\be}^{p, q, r} \rpl (\nabla) = \sOm_{\nabla}(\shm_{ \emom_{\al}^{p, q, r}}, \shm_{ \emom_{\be}^{p, q, r}})\nonumber\\
{} = p^{2}\laa \hop_{\nabla}(\al_{0}), \hop_{\nabla}(\be_{0})\raa + \tfrac{q^{2}}{3}\big( \laa \al_{2} \twprod \be_{2}, \curv(\nabla)\raa -2\laa \sd_{\nabla} \al_{2}, \sd_{\nabla}\be_{2}\raa+ \scal_{(\al_{2}, \be_{2})}(\nabla)\big)\nonumber\\
\quad{} + r^{2} \laa \al_{3}, \be_{3}\raa+ pq\big( \laa \hop_{\nabla}(\al_{0}), \sd^{\ast}_{\nabla}\be_{2}\raa + \laa \sd_{\nabla}^{\ast}\al_{2}, \hop_{\nabla}(\be_{0}) \raa\big) \nonumber\\
\quad {}+ pr\left( \laa \hop_{\nabla}(\al_{0}), \be_{3}\raa + \laa \al_{3}, \hop_{\nabla}(\be_{0}) \raa \right) + qr\big( \laa \sd_{\nabla}^{\ast}\al_{2}, \be_{3} \raa + \laa \al_{3}, \sd^{\ast}_{\nabla}\be_{2}\raa \big).\label{malmbe1}
\end{gather}
Simplifying~\eqref{malmbe1} using~\eqref{isotropyidentity},~\eqref{ii2},~\eqref{ii3},~\eqref{albecurv} of Lemma~\ref{albecurvlemma},~\eqref{albe23}, and Lemma~\ref{poissonlemma} yields
\begin{gather*}
\spl \emom_{\al}^{p, q, r}, \emom_{\be}^{p, q, r} \rpl (\nabla)
 = p^{2}\laa \{\al_{0}, \be_{0}\}, \K(\nabla) \raa - pq\scal_{\shl \al_{1}, \be_{2}\shr + \shl \al_{2}, \be_{1}\shr}(\nabla)+ \tfrac{q^{2}}{3}\scal_{(\al_{2}, \be_{2})}(\nabla) \\
\qquad{}- pr\tf_{\nabla_{0}, \shl\al_{1}, \be_{3}\shr + \shl \al_{3}, \be_{1}\shr}( \nabla ) + \tfrac{qr}{3}\tf_{\nabla_{0}, (\al_{2}, \be_{3}) + (\al_{3}, \be_{2})}(\nabla) + \tfrac{q^{2}}{3}\tf_{\nabla_{0}, \shl \al_{2}, \be_{2}\shr}(\nabla)\\
\qquad{} + r^{2} \laa \al_{3}, \be_{3}\raa+ \tfrac{q^{2}}{3}\big( \laa \al_{2}\twprod \be_{2}, \curv(\nabla_{0}) \raa -2\laa \sd_{\nabla_{0}} \al_{2}, \sd_{\nabla_{0}}\be_{2}\raa\big) \\
\qquad{} + pr\big( \laa \hop_{\nabla_{0}}(\al_{0}), \be_{3}\raa + \laa \al_{3}, \hop_{\nabla_{0}}(\be_{0}) \raa \big)
+ qr\big( \laa \sd_{\nabla_{0}}^{\ast}\al_{2}, \be_{3} \raa + \laa \al_{3}, \sd_{\nabla_{0}}^{\ast}\be_{2}\raa\big).
\end{gather*}
If $p = -1$, $q = 3$, and $r = 3$, then, writing $\emom^{-1, 3, 3} = \emom$,
\begin{gather*}
\spl \emom_{\al}, \emom_{\be}\rpl (\nabla) = \laa \{\al_{0}, \be_{0}\}, \K(\nabla) \raa + 3\scal_{\shl \al_{1}, \be_{2}\shr + \shl \al_{2},\be_{1}\shr+ (\al_{2}, \be_{2})}(\nabla) \\
\hphantom{\spl \emom_{\al}, \emom_{\be}\rpl (\nabla) =}{} +3\tf_{\nabla_{0}, \shl\al_{1}, \be_{3}\shr + \shl \al_{3}, \be_{1}\shr+ (\al_{2}, \be_{3}) + (\al_{3}, \be_{2})+ \shl \al_{2}, \be_{2}\shr}(\nabla)\\
\hphantom{\spl \emom_{\al}, \emom_{\be}\rpl (\nabla) =}{} + 9 \laa \al_{3}, \be_{3}\raa + 3\big( \laa \al_{2}\twprod \be_{2}, \curv(\nabla_{0}) \raa -2\laa \sd_{\nabla_{0}} \al_{2}, \sd_{\nabla_{0}}\be_{2}\raa\big) \\
\hphantom{\spl \emom_{\al}, \emom_{\be}\rpl (\nabla) =}{} - 3\big( \laa \hop_{\nabla_{0}}(\al_{0}), \be_{3}\raa + \laa \al_{3}, \hop_{\nabla_{0}}(\be_{0}) \raa \big) + 9\big( \laa \sd_{\nabla_{0}}^{\ast}\al_{2}, \be_{3} \raa + \laa \al_{3}, \sd_{\nabla_{0}}^{\ast}\be_{2}\raa\big)\\
\hphantom{\spl \emom_{\al}, \emom_{\be}\rpl (\nabla)}{} = \emom_{[\al, \be]}(\nabla) + \mcoc_{\nabla_{0}}(\al, \be).\tag*{\qed}
\end{gather*}\renewcommand{\qed}{}
\end{proof}

Since $\emom$ is not equivariant, the skew-symmetric map $\mcoc_{\nabla_{0}}\colon \hhamsec(M) \times \hhamsec(M) \to \rea$ is a $2$-cocycle, the \textit{nonequivariance cocycle}, for the Lie algebra cohomology of $(\hhamsec(M), [\dum, \dum])$ with coef\/f\/icients in the trivial module $\rea$. The nonequivariance cocycles associated with dif\/ferent choices of reference connection $\nabla_{0}\in \symcon(M, \Om)$ are cohomologous.
In what follows the dependence on the choice of ${\nabla_{0}}$ is sometimes suppressed, and there is written simply $\mcoc$ instead of $\mcoc_{\nabla_{0}}$.
In a f\/inite-dimensional setting it follows from the def\/inition that the nonequivariance cocycle of a weakly Hamiltonian action is a constant function. In the inf\/inite-dimensional setting here, the same conclusion would follow from the def\/inition~\eqref{mcocdefined}, had adequate analytic foundations been laid. Although they have not, Lemma~\ref{emombracketslemma} shows explicitly that $\spl\emom_{\al},\emom_{\be}\rpl - \emom_{[\al, \be]}$ is constant on $\symcon(M, \Om)$.

\subsection{}
Because $\hzsec_{i}(M)= \hzsec(M) \cap \hssec_{i}(M)= \hssec_{i}(M)$ is an ideal in $(\hzsec_{1}(M), [\dum, \dum]_{\trun})$ for any $i \geq 2$, the Lie bracket $[\dum, \dum]_{\trun}$ induces a Lie bracket (that will also be denoted $[\dum, \dum]_{\trun}$) on the vector space $\hzsec_{1}(M)/\hzsec_{4}(M)$.
As a vector space $\hzsec_{1}(M)/\hzsec_{4}(M)$ is identif\/ied with $\symplecto(M, \Om)^{\sflat} \oplus \ssec^{2}(M) \oplus \ssec^{3}(M)$, and the Lie bracket is given explicitly by
\begin{gather}
[\al, \be]_{\trun} = [ \al_{1}^{\ssharp}, \be_{1}^{\ssharp}]^{\sflat} + (\al_{2}, \be_{2}) + \lie_{\al_{1}^{\ssharp}}\be_{2} - \lie_{\be_{1}^{\ssharp}}\al_{2}\nonumber\\
\hphantom{[\al, \be]_{\trun} =}{} + (\al_{2}, \be_{3}) + (\al_{3}, \be_{2}) + \shl \al_{2}, \be_{2}\shr + \lie_{\al_{1}^{\ssharp}}\be_{3} - \lie_{\be_{1}^{\ssharp}} \al_{3} .\label{trbracketdefined}
\end{gather}
Although~\eqref{trbracketdefined} satisf\/ies the Jacobi identity by construction, this can also be checked by tedious computations using the compatibility and $\symplecto(M, \Om)$-equivariance of $(\dum, \dum)$ and $\shl\dum, \dum\shr$.

Since $\hbsec_{4}(M) = \hbsec(M) \cap \ssec_{4}(M)$ satisf\/ies $\hbsec_{4}(M) = \hzsec_{4}(M)$ and $\hbsec_{1}(M)$ is a subalgebra of $(\hzsec_{1}(M), [\dum, \dum]_{\trun})$, $(\hbsec_{1}(M)/\hbsec_{4}(M), [\dum, \dum]_{\trun})$ is a Lie subalgebra of $(\hzsec_{1}(M)/\hzsec_{4}(M), [\dum, \dum]_{\trun})$ and the exact sequence~\eqref{hamsplit2} of Lemma~\ref{hamseccentrallemma} descends to give an exact sequence
\begin{gather*}
\{0\} \longrightarrow \rea \stackrel{\iota}{\longrightarrow} \big(\hhamsec(M)/\hhamsec_{4}(M), [\dum, \dum]\big) \stackrel{\pi_{1}}{\longrightarrow} \big(\hbsec_{1}(M)/\hbsec_{4}(M), [\dum, \dum]_{\trun}\big) \longrightarrow \{0\}
\end{gather*}
of Lie algebras, where $\iota$ is the restriction to the constant functions $\rea \subset \cinf(M)$ of the map $\iota\colon \cinf(M) \to \hhamsec(M)$ def\/ined by $\iota(f) = -f + \sd^{\ast}f$ (see Lemma~\ref{hamseclemma}), and embeds $\rea$ in $\hhamsec^{0}(M) = \cinf(M)$ as the constant functions (but with a sign change). The bracket $[\dum, \dum]$ descends to a Lie bracket, also denoted $[\dum, \dum]$ on the quotient $\hhamsec(M)/(\iota(\rea) \oplus \hhamsec_{4}(M))$ by the Lie ideal $\iota(\rea) \oplus \hhamsec_{4}(M)$.

For a compact symplectic manifold $(M, \Om)$, the linear map $\con\colon (\hhamsec(M), [\dum, \dum]) \to \rea$ def\/ined by
\begin{gather*}
\con(\al) = \vol_{\Om_{n}}(M)^{-1}\int_{M}\al_{0}\,\Om_{n} = \vol_{\Om_{n}}(M)^{-1}\laa \al_{0}, 1\raa,
\end{gather*}
is a Lie algebra homomorphism because, by~\eqref{hamsec1}, $[\al, \be]_{0} = -\{\al_{0}, \be_{0}\}$, and the integral of a Poisson bracket vanishes. Def\/ine $\nu\colon \hhamsec(M) \to \hhamsec(M)$ by $\nu(\al) = \al + \iota(\con(\al))$ (so $\nu(\al)_{0} = \al_{0} - \con(\al_{0})$).
Because $\iota(\rea)$ is central in $(\hhamsec(M), [\dum, \dum])$, $\nu$ is a Lie algebra homomorphism of $(\hhamsec(M), [\dum, \dum])$. Because $\nu \circ \nu = \nu$, $\pi_{1} \circ \nu = \pi_{1}$, and $\ker \nu = \iota(\rea)$, it follows that the maps~$\nu$ and~$\pi_{1}$ induce Lie algebra isomorphisms
\begin{gather*}
 \big(\hhamsec(M)/\big(\iota(\rea) \oplus \hhamsec_{4}(M)\big), [\dum, \dum]\big) \simeq \big(\nu\big(\hhamsec(M)\big)/\hhamsec_{4}(M), [\dum, \dum]\big)
\simeq \big(\hbsec_{1}(M)/\hbsec_{4}(M), [\dum, \dum]_{\trun}\big).
\end{gather*}
When $M$ is compact, the Lie algebra $\ham(M, \Om)$ is isomorphic with the mean zero elements of~$\cinf(M)$ equipped with the Poisson bracket. It follows from Lemma~\ref{hamseclemma} that the map $\hhamsec(M) \to \hge = \ham(M, \Om) \oplus \ssec^{2}(M) \oplus \ssec^{3}(M)$ def\/ined by $\al \to -\al_{0}+ \al_{2}+ \al_{3}$ induces a linear isomorphism $\hhamsec(M)/(\iota(\rea) \oplus \hhamsec_{4}(M)) \to \hge$ with inverse $\j\colon \hge \to \hhamsec(M)/(\iota(\rea) \oplus \hhamsec_{4}(M))$ given by $\j(f + \al_{2} + \al_{3}) = \iota(f) + \al_{2} + \al_{3} = -f + \sd^{\ast}f + \al_{2} + \al_{3} \in \hhamsec(M)/(\iota(\rea) \oplus \hhamsec_{4}(M))$, and such that the pullback via $\j$ of the Lie bracket $[\dum, \dum]$, also to be written $[\dum, \dum]$, agrees, when restricted to the subspace $\ham(M, \Om)$, with the Poisson bracket.

Since for the symplectic af\/f\/ine action~\eqref{halnabla} the action of $\rea \oplus \hhamsec_{4}(M)$ is trivial, the action~\eqref{halnabla} descends to a symplectic af\/f\/ine action of $(\hhamsec(M)/(\iota(\rea) \oplus \hhamsec_{4}(M)), [\dum, \dum])$ on $\symcon(M, \Om)$. Composing with $\j$ yields a symplectic af\/f\/ine action of $(\hge, [\dum, \dum])$ on $\symcon(M, \Om)$. Precisely, the symplectic af\/f\/ine action $\pi^{\hge}$ of $\al \in \hge$ on $\symcon(M, \Om)$ is def\/ined to be $\pi^{\hge}(\al, \nabla) = \pi^{\hhamsec}(\j(\al), \nabla)$; this makes sense because~$\hhamsec_{4}(M)$ acts trivially on $\symcon(M, \Om)$. Lemma~\ref{hamiltonianlemma} summarizes the preceding as showing that this action is weakly Hamiltonian. In the statement, the dual $\hge^{\ast}$ is identif\/ied with $\hge$ via the pairing $\laa \dum, \dum \raa$. Precisely, the pairing $\laa\dum, \dum\raa$ extends to elements $\al$ and $\be$ of $\hssec(M)$ (or of $\hhamsec(M)$ or $\hge$) by linearity as $\laa \al, \be \raa = \sum_{k \geq 0}\laa \al_{k}, \be_{k}\raa$. Note that, if $M$ is compact and $\al, \be \in \hhamsec(M)$, then $\laa \al_{1}, \be_{1}\raa = \laa \sd^{\ast}\al_{0}, \sd^{\ast}\be_{0} \raa = \int_{M}\{\al_{0}, \be_{0}\}\,\Om_{n} = 0$. Since $M$ is compact elements of the linear duals of any of $\hhamsec(M)$, $\hge$, etc. can be identif\/ied with elements of the space itself via the pairing $\laa \dum, \dum \raa$.

\begin{Lemma}\label{hamiltonianlemma}
Let $(M, \Om)$ be a compact symplectic manifold. The action $\pi^{\hge}$ of $(\hge, [\dum, \dum])$, on $\symcon(M, \Om)$ induced by the symplectic affine action~\eqref{halnabla} is weakly Hamiltonian with $($nonequiva\-riant$)$ moment map $\mom\colon \symcon(M, \Om)\to \hge^{\ast}$ given by
\begin{gather*}
\laa \mom(\nabla), \al \raa = \emom_{\j(\al)}(\nabla) = \laa \al_{0}, \K(\nabla) \raa + 3\scal_{\al_{2}}(\nabla) + 3\tf_{\nabla_{0}, \al_{3}}(\nabla),
\end{gather*}
for any fixed $\nabla_{0} \in \symcon(M, \Om)$ and any $\al = \al_{0} + \al_{2} + \al_{3} \in \hge$.
\end{Lemma}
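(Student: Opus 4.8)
The plan is to unwind the definition of a weakly Hamiltonian action and reduce the claim to matching two vector fields on $\symcon(M, \Om)$ that, in effect, have already been computed in the preceding paragraphs. By definition, the action $\pi^{\hge}$ is weakly Hamiltonian with nonequivariant moment map $\mom$ precisely when, for every $\al \in \hge$, the Hamiltonian vector field on $(\symcon(M, \Om), \sOm)$ generated by the function $\nabla \mapsto \laa \mom(\nabla), \al\raa = \emom_{\j(\al)}(\nabla)$ coincides with the infinitesimal generator $\X^{\al}$ of $\pi^{\hge}$ at $\al$. So the proof amounts to computing both vector fields and verifying that they agree.

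First I would check that $\laa \mom(\nabla), \al\raa$ is a well-defined function of $\nabla$, since $\j$ takes values in the quotient $\hhamsec(M)/(\iota(\rea) \oplus \hhamsec_{4}(M))$. The functional $\emom_{\be}$ depends only on $\be_{0}$, $\be_{2}$, $\be_{3}$, so it is insensitive to the $\hhamsec_{4}(M)$-part; and replacing $\be_{0}$ by $\be_{0} - c$ for a constant $c$ changes $\emom_{\be}(\nabla)$ by $-c\int_{M}\K(\nabla)\,\Om_{n}$, which by~\eqref{intk} is a constant independent of $\nabla$. Hence $\emom_{\j(\al)}$ is well-defined up to an additive constant, which does not affect the Hamiltonian vector field; choosing the mean-zero representative $\al_{0} \in \ham(M, \Om) \cong \cinf_{0}(M)$ then fixes the value and produces the displayed formula.

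Next I would compute the Hamiltonian vector field. Writing $\be = \j(\al)$, so that $\be_{0} = -\al_{0}$, $\be_{1} = \sd^{\ast}\al_{0}$, $\be_{2} = \al_{2}$, and $\be_{3} = \al_{3}$, the formula $\shm_{\emom^{p,q,r}_{\be}} = -p\hop(\be_{0}) - q\sd^{\ast}\be_{2} - r\be_{3}$ recorded before Lemma~\ref{albecurvlemma}, specialized to $(p,q,r) = (-1,3,3)$, gives
\begin{gather*}
\shm_{\emom_{\j(\al)}} = \hop(\be_{0}) - 3\sd^{\ast}\be_{2} - 3\be_{3} = -\hop(\al_{0}) - 3\sd^{\ast}\al_{2} - 3\al_{3}.
\end{gather*}
(The three summands come, respectively, from the Hamiltonian structure of $\K$ in Theorem~\ref{momentmaptheorem}, from $\shm_{\scal_{\al}} = -\sd^{\ast}\al$ in Lemma~\ref{alhamlemma}, and from the remark that $\tf_{\nabla_{0}, \Pi}$ generates the translation $-\Pi$.) On the other side, by Corollary~\ref{halnablacorollary} the generator of $\pi^{\hge}$ at $\al$ is $\rho(\al)\nabla = \pi^{\hhamsec}(\j(\al), \nabla) - \nabla$, which by~\eqref{halnabla} equals $-\hop_{\nabla}(\be_{0}) + 3\sd^{\ast}_{\nabla}\be_{2} + 3\be_{3} = \hop_{\nabla}(\al_{0}) + 3\sd^{\ast}_{\nabla}\al_{2} + 3\al_{3}$. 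Under the sign convention $\X^{\al} = -\rho(\al)$ built into the definition $x \mapsto \tfrac{d}{dt}\big|_{t=0}\exp(-tx)\cdot p$, this reads $\X^{\al} = -\hop(\al_{0}) - 3\sd^{\ast}\al_{2} - 3\al_{3}$, which is exactly $\shm_{\emom_{\j(\al)}}$. This establishes the weakly Hamiltonian property and identifies $\mom$ as the nonequivariant moment map.

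Finally I would emphasize that the action is only \emph{weakly} Hamiltonian: full equivariance would require $\spl\emom_{\al}, \emom_{\be}\rpl = \emom_{[\al, \be]}$, whereas Lemma~\ref{emombracketslemma} shows these differ by the generally nonzero cocycle $\mcoc_{\nabla_{0}}$. Since all the ingredients—the formula for $\shm_{\emom^{p,q,r}_{\al}}$, the affine action of Corollary~\ref{halnablacorollary}, and the topological invariance~\eqref{intk}—are already in hand, there is no substantial obstacle; the only points demanding care are the bookkeeping of the $\j$-components (notably the sign flip $\be_{0} = -\al_{0}$) and the reconciliation of the two sign conventions relating $\X^{\al}$ to $\rho(\al)$, which is where a hasty computation is most likely to go astray.
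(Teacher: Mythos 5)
Your proposal is correct and follows essentially the same route as the paper: the paper's proof likewise identifies $\shm_{\emom_{\al}} = \hop_{\nabla}(\al_{0}) - 3\sd^{\ast}_{\nabla}\al_{2} - 3\al_{3}$ with $\tfrac{d}{dt}\big|_{t=0}\pi^{\hhamsec}(-t\al,\nabla)$ and then passes formally to $\hge$ via $\j$, which is exactly your matching of the two vector fields with the $\j$-bookkeeping made explicit. Your additional remarks on well-definedness modulo $\iota(\rea)\oplus\hhamsec_{4}(M)$ and on the sign convention $\X^{\al} = -\rho(\al)$ are consistent with the paper and correctly handled.
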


\begin{proof}
By def\/inition of the symplectic af\/f\/ine action $\pi^{\hhamsec}$, for $\al \in \hhamsec(M)$,
\begin{gather*}
\shm_{\emom_{\al}}(\nabla) = \hop_{\nabla}(\al_{0}) - 3\sd_{\nabla}^{\ast}\al_{2} - 3\al_{3} = \tfrac{d}{dt}\big|_{t = 0}\pi^{\hhamsec}(-t\al, \nabla).
\end{gather*}
This shows that $\shm_{\emom_{\al}}$ is the vector f\/ield generated by the action of $\al$ on $\symcon(M, \Om)$, so $\emom(\nabla)$ is a (nonequivariant) moment map for the action of $\hhamsec(M)$ on $\symcon(M, \Om)$, and this action is weakly Hamiltonian. The claim follows formally from the def\/initions of $(\hge, [\dum, \dum])$ and $\pi^{\hge}$.
\end{proof}

Fix $\nabla_{0} \in \symcon(M, \Om)$. Def\/ine $\sag = \cinf(M) \oplus \ssec^{2}(M) \oplus \ssec^{3}(M)$ and extend the pairing $\laa \dum, \dum \raa$ to~$\sag$ as for~$\hge$. The linear isomorphism $\j\colon \hge \to \hhamsec(M)/(\iota(\rea) \oplus \hhamsec_{4}(M))$ lifts to a linear isomorphism $\hj\colon \sag \to \hhamsec(M)/\hhamsec_{4}(M)$ (def\/ined in the same way as $\j$, by $\hj(f + \al_{2} + \al_{3}) = \iota(f) + \al_{2} + \al_{3} \in \hhamsec(M)/\hhamsec_{4}(M)$). As for $\j$, the pullback to $\sag$ via $\hj$ of the Lie bracket $[\dum, \dum]$, agains also to be written $[\dum, \dum]$, agrees, when restricted to the subspace $\cinf(M)$, with the Poisson bracket. Since~$\mcoc$ vanishes when restricted to $\hhamsec_{4}(M)$ in either argument, it can be viewed as a pairing on $\hhamsec(M)/\hhamsec_{4}(M)$, so it makes sense to def\/ine a Lie bracket $\jl\dum, \dum\jr$ on $\sag$ by
\begin{gather}\label{stbracketdefined}
\jl\al, \be \jr = [\al, \be] + \mcoc\big(\hj(\al), \hj(\be)\big),
\end{gather}
Here $\mcoc$ is regarded as taking values in the subspace of constant functions $\rea \subset \cinf(M)$. Since $\iota(\rea) = \ker \nu$, $\nu$ descends from $\hhamsec(M)$ to give a Lie algebra homomorphism
\begin{gather*}
\nu\colon \ \big(\hhamsec(M)/\hhamsec_{4}(M), [\dum,\dum]\big)\to \big(\hhamsec(M)/(\iota(\rea) \oplus \hhamsec_{4}(M)), [\dum,\dum]\big).
\end{gather*}
The linear map $\bar{\nu}\colon \hj \to \j$ def\/ined by $\hj(f + \al_{2} + \al_{3}) = f - \con(f) + \al_{2} + \al_{3}$ satisf\/ies $\j \circ \bar{\nu} = \nu \circ \hj$, so is a Lie algebra homomorphism
$\bar{\nu}\colon (\sag, \jl\dum, \dum\jr)\to(\hge, [\dum, \dum])$. In what follows it is convenient, and should cause no confusion, to omit the bar and write simply $\nu$ in place of $\bar{\nu}$.
The Lie algebra $(\sag, \jl\dum, \dum\jr)$ is the central extension of $(\hge, [\dum, \dum])$ corresponding to the $2$-cocycle $\mcoc$. Precisely, the following sequence of Lie algebras is exact:
\begin{gather*}
\{0\} \longrightarrow \rea \longrightarrow \big(\sag, \jl\dum, \dum\jr\big)\stackrel{\nu}{\longrightarrow}(\hge, [\dum, \dum]) \longrightarrow \{0\}.
\end{gather*}
The action $\hat{\pi}^{\sag}(\al, \nabla)$ of $(\sag, \jl\dum,\dum\jr)$ on $\symcon(M, \Om)$ is def\/ined to be the action $\pi^{\hge}(\nu(\al), \nabla)$ of its image via the projection $\nu$, so that the center $\rea$ of $(\sag, \jl\dum, \dum\jr)$ acts trivially.

\begin{Theorem}\label{emomtheorem}
On a compact symplectic manifold $(M, \Om)$, for any $\nabla_{0} \in \symcon(M, \Om)$ the action of $(\sag, \jl\dum, \dum\jr)$ on $(\symcon(M, \Om), \sOm)$ is Hamiltonian, with equivariant moment map $\hemom\colon \symcon(M, \Om) \to \sag^{\ast}$ given by
\begin{gather*}
\laa\hemom(\nabla), \al \raa = \laa \mom(\nabla), \nu(\al) \raa - \con(\hj(\al)) = \emom_{\j \circ \nu(\al)}(\nabla) - \con(\hj(\al))= \emom_{\j \circ \nu(\al)}(\nabla) +\con(\al_{0})
\end{gather*}
for any $\al = \al_{0} + \al_{2} + \al_{3} \in \sag$, where $\mom$ is determined by $\nabla_{0}$ as in Lemma~{\rm \ref{hamiltonianlemma}}.
\end{Theorem}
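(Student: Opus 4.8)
The plan is to recognize $\hemom$ as the equivariant moment map obtained from the weakly Hamiltonian action of Lemma~\ref{hamiltonianlemma} by the standard device of passing to the central extension determined by the nonequivariance cocycle. All the pieces are in hand. By Lemma~\ref{hamiltonianlemma}, $\mom$ is a nonequivariant moment map for the action $\pi^{\hge}$ of $(\hge,[\dum,\dum])$, with $\laa\mom(\nabla),\xi\raa = \emom_{\j(\xi)}(\nabla)$. Since $\j$ is a Lie algebra homomorphism, Lemma~\ref{emombracketslemma} gives $\spl\emom_{\j(\xi)},\emom_{\j(\eta)}\rpl = \emom_{\j([\xi,\eta])} + \mcoc(\j(\xi),\j(\eta))$, exhibiting $\mcoc(\j(\dum),\j(\dum))$ as the nonequivariance cocycle of $\mom$. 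By construction~\eqref{stbracketdefined}, $(\sag,\jl\dum,\dum\jr)$ is the central extension of $(\hge,[\dum,\dum])$ by this cocycle, and its action $\hat{\pi}^{\sag}$ on $\symcon(M,\Om)$ is defined to factor through the projection $\nu$.

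First I would check that $\hemom$ is a moment map, that is, that $\laa\hemom(\dum),\al\raa$ generates the action vector field of $\al$. Using the last form of the defining formula, $\laa\hemom(\nabla),\al\raa = \emom_{\j\circ\nu(\al)}(\nabla)+\con(\al_0)$ differs from $\emom_{\j\circ\nu(\al)}$ by a constant, hence generates the same Hamiltonian vector field; by Lemma~\ref{hamiltonianlemma} this is the generator of $\pi^{\hge}(\nu(\al),\dum)=\hat{\pi}^{\sag}(\al,\dum)$. In particular the central $\rea=\ker\nu$ acts trivially, and evaluating the formula on a central constant $c$ gives $\laa\hemom(\nabla),c\raa=\emom_{0}(\nabla)+c=c$, so the moment map pairs the central generator with the corresponding constant.

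The heart is equivariance. I would compute, discarding constants from the Poisson bracket and applying Lemma~\ref{emombracketslemma}, $\spl\laa\hemom,\al\raa,\laa\hemom,\be\raa\rpl = \spl\emom_{\j\circ\nu(\al)},\emom_{\j\circ\nu(\be)}\rpl = \emom_{[\j\circ\nu(\al),\j\circ\nu(\be)]}+\mcoc(\j\circ\nu(\al),\j\circ\nu(\be))$. Since $\j\circ\nu$ is a Lie homomorphism and $\nu$ annihilates the central $\rea$, one has $[\j\circ\nu(\al),\j\circ\nu(\be)] = \j\circ\nu([\al,\be])$, so the first term equals $\laa\hemom(\nabla),[\al,\be]\raa-\con([\al,\be]_0)$. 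Comparing with $\laa\hemom(\nabla),\jl\al,\be\jr\raa = \laa\hemom(\nabla),[\al,\be]\raa+\mcoc(\hj(\al),\hj(\be))$ (using that the center pairs to its constant), equivariance reduces to the two bookkeeping identities $\mcoc(\j\circ\nu(\al),\j\circ\nu(\be))=\mcoc(\hj(\al),\hj(\be))$ and $\con([\al,\be]_0)=0$.

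These identities are where the care lies, and I expect the constant bookkeeping to be the main, if modest, obstacle. The first holds because $\hj(\al)$ and $\j\circ\nu(\al)$ differ by the element $\iota(\con(\al_0))\in\iota(\rea)$, on which $\mcoc$ vanishes: an element of $\iota(\rea)$ has vanishing degree-two and degree-three parts, and $\emom_{\iota(c)}$ equals the constant $c\int_M\K(\nabla)\,\Om_n$ by~\eqref{intk}, so its Hamiltonian vector field is zero and every term of~\eqref{mcocdefined} drops out. The second holds because $[\al,\be]_0=-\{\al_0,\be_0\}$ by~\eqref{hamsec1}, whose integral over the compact manifold $M$ vanishes, giving $\con([\al,\be]_0)=0$. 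With these in place the two sides agree, proving $\spl\laa\hemom,\al\raa,\laa\hemom,\be\raa\rpl=\laa\hemom(\nabla),\jl\al,\be\jr\raa$ and hence that $\hemom$ is an equivariant moment map for the Hamiltonian action of $(\sag,\jl\dum,\dum\jr)$.
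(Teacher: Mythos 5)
Your proposal is correct and follows essentially the same route as the paper's proof: both establish the moment map property by noting that $\laa\hemom(\dum),\al\raa$ and $\emom_{\j\circ\nu(\al)}$ differ by a constant (so generate the same vector field, namely that of the action), and both reduce equivariance via Lemma~\ref{emombracketslemma} to the identities $\mcoc(\j\circ\nu(\al),\j\circ\nu(\be))=\mcoc(\hj(\al),\hj(\be))$ (the paper phrases this as $\al_{0}$ entering $\mcoc$ only through $\shm_{\emom_{\al}}=\shm_{\emom_{\nu(\al)}}$, you phrase it as bilinearity plus vanishing of $\mcoc$ on $\iota(\rea)$ --- the same observation) and the vanishing of $\con$ on brackets. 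The constant bookkeeping you flag is handled identically in the paper via the computation $\con(\hj\jl\al,\be\jr)=-\mcoc(\hj(\al),\hj(\be))$.
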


\begin{proof}
Let $\al, \be \in \sag$. It is convenient to write $\hemom_{\al}$ and $\mom_{\nu(\al)}$ in place of $\laa\hemom(\nabla), \al \raa$ and $\laa \mom(\nabla), \nu(\al) \raa$.

By~\eqref{intk}, when $M$ is compact, the constant $\kappa(M, \Om) = \int_{M}\K(\nabla)\,\Om_{n}$ does not depend on the choice of $\nabla \in \symcon(M, \Om)$. For $\be \in \hhamsec(M)$,
\begin{gather*}
\emom_{\nu(\be)}(\nabla) - \emom_{\be}(\nabla) = \laa \con(\be), \K(\nabla)\raa = \con(\be)\kappa(M, \Om)
\end{gather*}
is a constant function on $\symcon(M, \Om)$. It follows that $\hemom_{\al}$, $\mom_{\nu(\al)}$, and $\emom_{\j \circ \nu(\al)}$ dif\/fer by constant functions, so they generate on $\symcon(M, \Om)$ the same Hamiltonian vector f\/ield. This is the vector f\/ield generated by the symplectic af\/f\/ine action of $\nu(\al)$ on $\symcon(M, \Om)$, and this equals the lifted action of $\al$ on $\symcon(M, \Om)$. This suf\/f\/ices to show that $\hemom$ is a moment map for the action of $(\sag, \jl\dum,\dum\jr)$ on $\symcon(M, \Om)$, provided the equivariance of $\hemom$ can be shown.

For $\al, \be\in \hhamsec(M)$, since, in the def\/inition~\eqref{mcocdefined} of $\mcoc(\al, \be)$, there enter $\al_{0}$ and $\be_{0}$ only via $\shm_{\emom_{\al}}$ and $\shm_{\emom_{\be}}$, and, by the preceding remarks, these vector f\/ields equal $\shm_{\emom_{\nu(\al)}}$ and $\shm_{\emom_{\nu(\be)}}$, there holds $\mcoc(\nu(\al), \nu(\be)) = \mcoc(\al, \be)$.

For $\al, \be \in \sag$, because $[\hhamsec(M), \hhamsec(M)] \subset \ker \con$,
\begin{align}
\con(\hj \jl \al, \be\jr) &= \con(\hj [ \al, \be]) + \con(\hj (\mcoc( \hj(\al), \hj(\be))))\nonumber \\
& = \con([\hj( \al), \hj(\be)]) - \mcoc( \hj(\al), \hj(\be)) = - \mcoc( \hj(\al), \hj(\be)).\label{concoc}
\end{align}
By the def\/initions of $\hemom$ and $\mcoc$, for $\al, \be \in \sag$,
\begin{align*}
\spl \hemom_{\al}, \hemom_{\be}\rpl & = \spl \mom_{\nu(\al)}, \mom_{\nu(\be)}\rpl = \spl \emom_{\j \circ\nu(\al)}, \emom_{\j \circ\nu(\be)}\rpl \\
& = \emom_{[\j \circ\nu(\al), \j \circ \nu(\be)]} + \mcoc(\j \circ \nu(\al), \j \circ \nu(\be))
 = \emom_{\j ([\nu(\al), \nu(\be)])} + \mcoc(\nu \circ \hj (\al), \nu \circ \hj(\be))\\
& = \emom_{\j \circ \nu(\jl\al , \be\jr)} + \mcoc(\hj(\al), \hj(\be))= \emom_{\j \circ \nu(\jl\al , \be\jr)} - \con(\hj \jl \al, \be\jr) = \hemom_{\jl \al, \be \jr},
\end{align*}
where the penultimate equality follows from~\eqref{concoc}.
\end{proof}

Viewed as a map taking values in the dual of $\hhamsec(M)/(\iota(\rea)\oplus \hhamsec_{4}(M))$, the map $\mom(\nabla)$ is identif\/ied via $\laa \dum, \dum \raa$ with the element
\begin{gather*}
\j\left(\K(\nabla) + 3\ric(\nabla) + 3(\nabla - \nabla_{0})\right)= -\K(\nabla) + \sd^{\ast}\K(\nabla) + 3\ric(\nabla) + 3(\nabla - \nabla_{0})
\end{gather*}
of $\hhamsec(M)/(\iota(\rea)\oplus \hhamsec_{4}(M))$, so the map $\mom(\nabla) \in \hge^{\ast}$ is identif\/ied via $\laa\dum, \dum \raa$ with
\begin{gather*}
\K(\nabla) + 3\ric(\nabla) + 3(\nabla - \nabla_{0}) \in \hge.
\end{gather*}
Similarly $\hemom(\nabla) \in \sag^{\ast}$ is identif\/ied with
\begin{gather}\label{hemomfull}
\hemom(\nabla)= \vol_{\Om_{n}}(M)^{-1} + \bar{\K}(\nabla) + 3\ric(\nabla) + 3(\nabla - \nabla_{0}) \in \sag,
\end{gather}
where $\sag^{\ast}$ is identif\/ied with $\sag$ using $\laa \dum, \dum \raa$, $\bar{\K}(\nabla) = \K(\nabla) - \ka(M, \Om)$ is the mean zero part of $\K(\nabla)$, and the constant function $\vol_{\Om_{n}}(M)^{-1}$ is identif\/ied via $\laa \dum, \dum \raa$ with the map $f \in \cinf(M) \to \con(f)$.

On a compact symplectic manifold f\/ix $\nabla_{0} \in \symcon(M, \Om)$ with corresponding moment map $\hemom\colon \symcon(M, \Om) \to \sag^{\ast}$ and def\/ine $\henon\colon \symcon(M, \Om)\to \rea$ by
\begin{gather*}
\henon(\nabla) = \laa \hemom(\nabla), \hemom(\nabla) \raa =\vol_{\Om_{n}}(M)^{-1} \big(1- \kappa(M, \Om)^{2}\big) + \emf(\nabla) + 18\rc_{(1)}(\nabla),
\end{gather*}
where $\emf(\nabla) = \int_{M}\K(\nabla)^{2}\Om_{n}$ and $\rc_{(1)}$ is def\/ined in~\eqref{rckdefined}. By~\eqref{varex1} and~\eqref{varrck}, the f\/irst variation of $\henon$ is
\begin{gather*}
\vr_{\Pi}\henon(\nabla) = 2\sOm_{\nabla}(\hop(\K) + 9\sd^{\ast}\ric, \Pi),
\end{gather*}
so the critical points of $\henon$ are the solutions of $\hop(\K) + 9\sd^{\ast}\ric = 0$.

\subsection{}
For $s, t \in \reat$ let $\Psi_{s, t} \in \eno(\hssec(M))$ be the linear automorphism def\/ined in Lemma~\ref{lieisolemma}. Note that, although $\Psi_{s, t}$ does preserve $\hbsec_{1}(M)$, it does not preserve $\hhamsec(M)$. Rather, the subspace $\hhamsec^{s, t}(M) = \{\al \in \hhamsec(M)\colon s\al_{1} =-t\sd^{\ast}\al_{0}\}$ is a subalgebra of $(\hssec(M), [\dum, \dum]_{s, t})$ and $\Psi_{s, t}$ maps it isomorphically onto $\hhamsec(M)$. Hence, composing $\pi^{\hhamsec}$ with $\Psi_{s, t}\times\Id_{\symcon(M, \Om)}$ yields a degenerated symplectic af\/f\/ine action of $(\hhamsec^{s, t}(M), [\dum, \dum]_{s, t})$ on $\symcon(M, \Om)$ given by
\begin{align*}
\pi^{\hhamsec}_{s, t}(\al, \nabla) &= \nabla +t\lop_{\nabla}(\al_{1}) + 3s\sd^{\ast}_{\nabla}\al_{2} + 3t^{-1}s^{2}\al_{3}\\
&= \nabla - s^{-1}t^{2}\hop_{\nabla}(\al_{0}) + 3s\sd^{\ast}_{\nabla}\al_{2} + 3t^{-1}s^{2}\al_{3}.
\end{align*}
These actions are all isomorphic as long as $s$ and $t$ are both nonzero. The extremal case $(s, t) = (0, 1)$ yields a symplectic af\/f\/ine action of $(\hhamsec(M), \shl \dum, \dum \shr)$ on $\symcon(M, \Om)$ that corresponds up to a constant factor with the usual action of $\ham(M, \Om)$ on $\symcon(M, \Om)$. All the constructions made starting from $(\hhamsec(M), [\dum, \dum])$ can be repeated with $(\hhamsec^{s, t}(M), [\dum, \dum]_{s, t})$ in its place (this requires modifying the maps $\j$ and $\hj$). There results a moment map $\hemom^{s, t}$ for a Lie bracket $\jl\dum, \dum \jr_{s, t}$ on $\sag$.

However, it is easier, and equivalent, to introduce the parameters $(s, t)$ at the end of the construction, by viewing $\Psi_{s, t}$ as a linear endomorphism of $\sag$, and def\/ining $\jl\dum, \dum \jr_{s, t}$ directly to be the pullback of $\jl\dum, \dum \jr$ via $\Psi_{s, t}$. Here there has to be made precise how to view $\Psi_{s, t}$ as an endomorphism of $\sag$. It is simpler still to work with a modif\/ied map $\psi_{s, t}\colon \sag \to \sag$ def\/ined by $\psi_{s, t}(\al_{0} + \al_{2} + \al_{3}) = t\al_{0} + s\al_{2} + t^{-1}s^{2} \al_{3}$, and to def\/ine $\jl\dum, \dum \jr_{s, t}$ to be the pullback of $\jl\dum, \dum \jr$ via $\psi_{s, t}$
The composed action $\pi^{\sag}\circ (\psi_{s, t} \times \Id_{\sag})$ of $(\sag, \jl\dum, \dum \jr_{s, t})$ is symplectic af\/f\/ine, and the corresponding moment map $\hemom^{s, t}\colon \symcon(M, \Om) \to \sag^{\ast}$ is $ \hemom^{s, t} = \psi_{s, t}^{\ast} \circ\hemom$ where $\psi_{s, t}^{\ast} \in \eno(\sag^{\ast})$ is the linear dual of $\psi_{s, t}$.
The corresponding quadratic functional $\henon_{s, t}(\nabla)$ is
\begin{align}
\henon_{s, t}(\nabla)& = \laa \hemom^{s, t, u}(\nabla), \hemom^{s, t, u}(\nabla) \raa \nonumber\\
&= t^{2}\vol_{\Om_{n}}(M)^{-1}\big(1 - \kappa(M, \Om)^{2} + t^{2}\emf(\nabla) + 18s^{2}\rc_{(1)}(\nabla)\big).\label{engstdefined}
\end{align}
By~\eqref{varex1} and~\eqref{varrck}, the f\/irst variation of $\henon_{s/3, t}$ is
\begin{gather*}
\vr_{\Pi}\henon_{s/3, t}(\nabla) = 2\sOm_{\nabla}\big(t^{2}\hop(\K) + s^{2}\sd^{\ast}\ric, \Pi\big),
\end{gather*}
so the critical points of $\henon_{s/3, t}$ are the solutions of~\eqref{coupled}.

\subsection*{Acknowledgements}
I thank the anonymous referees for their thoughtful criticisms and detailed corrections which helped improve the article, particularly the exposition in Section~\ref{symplecticaffinesection}.

\pdfbookmark[1]{References}{ref}
\LastPageEnding


\begin{thebibliography}{99}
\footnotesize\itemsep=0pt

\bibitem{Atiyah-Bott}
Atiyah M.F., Bott R., The {Y}ang--{M}ills equations over {R}iemann surfaces,
 \href{https://doi.org/10.1098/rsta.1983.0017}{\textit{Philos. Trans. Roy. Soc. London Ser.~A}} \textbf{308} (1983),
 523--615.

\bibitem{Bieliavsky-Cahen-Gutt-Rawnsley-Schwachhofer}
Bieliavsky P., Cahen M., Gutt S., Rawnsley J., Schwachh\"ofer L., Symplectic
 connections, \href{https://doi.org/10.1142/S021988780600117X}{\textit{Int.~J. Geom. Methods Mod. Phys.}} \textbf{3} (2006),
 375--420, \href{http://arxiv.org/abs/math.SG/0511194}{math.SG/0511194}.

\bibitem{Bolsinov}
Bolsinov A.V., Compatible {P}oisson brackets on {L}ie algebras and the
 completeness of families of functions in involution, \href{https://doi.org/10.1070/IM1992v038n01ABEH002187}{\textit{Math. USSR Izv.}}
 \textbf{38} (1992), 69--90.

\bibitem{Bolsinov-Borisov}
Bolsinov A.V., Borisov A.V., Compatible {P}oisson brackets on {L}ie algebras,
 \href{https://doi.org/10.1023/A:1019856702638}{\textit{Math. Notes}} \textbf{72} (2002), 10--30.

\bibitem{Bourgeois-Cahen-preferred}
Bourgeois F., Cahen M., Can one def\/ine a preferred symplectic connection?,
 \href{https://doi.org/10.1016/S0034-4877(99)80013-5}{\textit{Rep. Math. Phys.}} \textbf{43} (1999), 35--42.

\bibitem{Bourgeois-Cahen}
Bourgeois F., Cahen M., A variational principle for symplectic connections,
 \href{https://doi.org/10.1016/S0393-0440(98)00059-X}{\textit{J.~Geom. Phys.}} \textbf{30} (1999), 233--265.

\bibitem{Cahen-Gutt}
Cahen M., Gutt S., Moment map for the space of symplectic connections, in Liber
 Amicorum Delanghe, Editors F.~Brackx, H.~De~Schepper, Gent Academia Press,
 2005, 27--36.

\bibitem{Fedosov}
Fedosov B., Deformation quantization and index theory, \textit{Mathematical
 Topics}, Vol.~9, Akademie Verlag, Berlin, 1996.

\bibitem{Fedosov-atiyahbottpatodi}
Fedosov B., The {A}tiyah--{B}ott--{P}atodi method in deformation quantization,
 \href{https://doi.org/10.1007/s002200050035}{\textit{Comm. Math. Phys.}} \textbf{209} (2000), 691--728.

\bibitem{Fox-critical}
Fox D.J.F., Critical symplectic connections on surfaces, \href{http://arxiv.org/abs/1410.1468}{arXiv:1410.1468}.

\bibitem{Golubchik-Sokolov}
Golubchik I.Z., Sokolov V.V., Compatible {L}ie brackets and integrable
 equations of the principal chiral f\/ield model type, \href{https://doi.org/10.1023/A:1020141820038}{\textit{Funct. Anal.
 Appl.}} \textbf{36} (2002), 172--181.

\bibitem{Guillemin}
Guillemin V., The integrability problem for {$G$}-structures, \href{https://doi.org/10.2307/1994134}{\textit{Trans.
 Amer. Math. Soc.}} \textbf{116} (1965), 544--560.

\bibitem{Gutt-remarks}
Gutt S., Remarks on symplectic connections, \href{https://doi.org/10.1007/s11005-006-0126-y}{\textit{Lett. Math. Phys.}}
 \textbf{78} (2006), 307--328.

\bibitem{Kobayashi-transformationgroups}
Kobayashi S., Transformation groups in dif\/ferential geometry,
 \href{https://doi.org/10.1007/978-3-642-61981-6}{\textit{Ergebnisse der Mathematik und ihrer Grenz\-gebiete}}, Vol.~70,
 Springer-Verlag, Berlin~-- Heidelberg, 1995.

\bibitem{Kostant-weyl}
Kostant B., The {W}eyl algebra and the structure of all {L}ie superalgebras of
 {R}iemannian type, \href{https://doi.org/10.1007/BF01263090}{\textit{Transform. Groups}} \textbf{6} (2001), 215--226,
 \href{http://arxiv.org/abs/math.RT/0106171}{math.RT/0106171}.

\bibitem{Mcduff-Salamon}
McDuf\/f D., Salamon D., Introduction to symplectic topology, Oxford Mathematical
 Monographs, The Clarendon Press, Oxford University Press, New York, 1995.

\bibitem{Penrose-Rindler}
Penrose R., Rindler W., Spinors and space-time, {V}ol.~1.~Two-spinor calculus
 and relativistic f\/ields, \href{https://doi.org/10.1017/CBO9780511564048}{\textit{Cambridge Monographs on Mathematical Physics}},
 Cambridge University Press, Cambridge, 1984.

\bibitem{Sternberg-differentialgeometry}
Sternberg S., Lectures on dif\/ferential geometry, Prentice-Hall, Inc., Englewood
 Clif\/fs, N.J., 1964.

\bibitem{Tamarkin-symplecticconnections}
Tamarkin D.E., Topological invariants of connections on symplectic manifolds,
 \href{https://doi.org/10.1007/BF01077473}{\textit{Funct. Anal. Appl.}} \textbf{29} (1995), 258--267.

\bibitem{Wald}
Wald R.M., General relativity, \href{https://doi.org/10.7208/chicago/9780226870373.001.0001}{University of Chicago Press}, Chicago, IL, 1984.

\end{thebibliography}
\end{document}